\title[Ancient Rescaled MCF]{On the Existence and Uniqueness of Ancient Rescaled Mean Curvature Flows}
\author{Letian Chen}
\address{Department of Mathematics, Johns Hopkins University, 3400 N. Charles Street, Baltimore, MD 21218}
\email{lchen155@jhu.edu}
\date{December 21, 2022}
\renewcommand*\env@matrix[1][*\c@MaxMatrixCols c]{%
	\hskip -\arraycolsep
	\let\@ifnextchar\new@ifnextchar
	\array{#1}}
\newtheorem{thm}{Theorem}[section]
\newtheorem{lem}[thm]{Lemma}
\newtheorem{prop}[thm]{Proposition}
\newtheorem{cor}[thm]{Corollary}
\theoremstyle{definition}
\theoremstyle{remark}
\newtheorem*{rem}{Remark}
\newcommand\abs[1]{\left|#1\right|}
\newcommand\norm[1]{\left\lVert#1\right\rVert}
\newcommand\inner[1]{\left \langle #1\right \rangle}
\newcommand{\nN}[0]{\mathbf{n}}
\newcommand{\xX}[0]{\mathbf{x}}
\newcommand{\hh}[0]{\mathbf{h}}
\newcommand{\hH}[0]{\mathbf{H}}
\newcommand{\fF}[0]{\mathbf{f}}
\newcommand\loc{\mathrm{loc}}
\newcommand\rel{\mathrm{rel}}
\newcommand{\eps}{\varepsilon}
\newcommand{\R}{\mathbb{R}}
\newcommand\wt{e^{\frac{\abs{\xX}^2}{4}}}
\DeclareMathOperator{\Div}{div}
\DeclareMathOperator{\tr}{tr}
\DeclareMathOperator{\supp}{supp}
\numberwithin{equation}{section}
\begin{document}
\begin{abstract}
	We show existence of ancient solutions to the rescaled mean curvature flow starting from a given asymptotically conical self-expander. These are examples of mean curvature flows coming out of cones that are not self-similar. We also show a strong uniqueness theorem when the cone is generic and use it to classify mean curvature flows coming out of generic cones of small entropy in low dimensions.
\end{abstract}
\maketitle
\section{Introduction}
A family of properly embedded hypersurface $\{\tilde{\Sigma}_s\}_{s \in I} \subset \mathbb{R}^{n+1}$ is a \textit{rescaled mean curvature flow} (RMCF) if it satisfies the following equation: 
\begin{align}
	\label{rescaled-mcf-equation}
	\left(\frac{\partial \xX}{\partial s}\right)^\perp = \hH_{\tilde{\Sigma}_s} - \frac{\xX^\perp}{2}.
\end{align}
Here $\hH_{\tilde{\Sigma}_s}$ is the mean curvature vector of $\tilde{\Sigma}_s$ and $\perp$ denotes the normal component. The RMCF equation comes from the \textit{mean curvature flow} (MCF) equation
\begin{align}
\label{mcf-equation}
	\left(\frac{\partial \xX}{\partial t}\right)^\perp = \hH_{\Sigma_t}.
\end{align}
via the change of variables 
\begin{align}
\label{change-of-variable}
	s = \log t \text{ and } \tilde{\Sigma}_s = t^{-1/2}\Sigma_t.
\end{align} 
We emphasize that there are (at least) two different flows that are referred to as the RMCF, namely \cref{rescaled-mcf-equation} and \cref{rescaled-mcf-equation} with its right hand side replaced by $\hH_{\tilde{\Sigma}_s} + \frac{\xX^\perp}{2}$. The latter flow arises naturally in the study of self-shrinkers. In this paper, however, we are only concerned with the former equation.\par 
The static solutions to \cref{rescaled-mcf-equation} are known as the \textit{self-expanders}; that is, smooth embedded hypersurfaces $\Sigma \subset \R^{n+1}$ satisfying
\begin{align}
	\hH_\Sigma = \frac{\xX^\perp}{2}. \label{expander-equation}
\end{align}
Equivalently, $\Sigma$ is a self-expander if and only if $\{\sqrt{t}\Sigma\}_{t \in (0,\infty)}$ is a solution to the MCF equation \cref{mcf-equation}. Self-expanders are modeled on MCFs coming out of (hyper)cones and serve as potential continuations of MCFs past conical singularities. Given a smooth cone $\mathcal{C} \subset \R^{n+1}$, a hypersurface $\Sigma$ is \textit{$C^{k,\alpha}$-asymptotic} to $\mathcal{C}$ if 
\begin{align*}
	\lim_{\rho \to 0^+} \rho \Sigma = \mathcal{C} \text{ in } C^{k,\alpha}_{\loc}(\mathbb{R}^{n+1} \setminus \{0\}).
\end{align*} 
Asymptotically conical self-expanders can be thought naturally as MCFs coming out of their asymptotic cones in the sense that 
\begin{align*}
	\lim_{t \to 0} \mathcal{H}^n \llcorner (\sqrt{t}\Sigma) = \mathcal{H}^n \llcorner \mathcal{C}.
\end{align*} \par
As cones are singular at the origin, their evolution by MCF needs not be unique. In particular, there may be multiple distinct smooth self-expanders asymptotic to the same cone. This nonuniqueness was first confirmed by Angenent--Ilmanen--Chopp \cite{AngenentIlmanenChopp}, who showed numerically that there exist at least two self-expanders coming out of a sufficiently wide rotationally symmetric double cone. The argument was later made rigorous by \cite{Helmensdorfer} (cf. \cite[Theorem 1.2]{BWIntegerDegree}). It turns out that there exist MCFs coming out of $\mathcal{C}$ that are not self-similar, provided there are more than one smooth self-expanders asymptotic to $\mathcal{C}$. One class of examples of non-self-similar MCFs is given by \textit{monotone Morse flow lines} connecting an unstable self-expander and a stable self-expander. These solutions are constructed in \cite{BCW} using ideas from \cite{BWTopologicalUniqueness} and turn out to possess more regularity than a general MCF coming out of $\mathcal{C}$. We also refer to \cite[Section 5]{ChenII} for a (simpler) low entropy version of the construction, and to \cite[Section 7]{CCMSGeneric} for a related construction for an asymptotically conical self-shrinker. \par 
Monotone Morse flow lines are a special case of \textit{Morse flow lines}, which are RMCFs connecting two (smooth) self-expanders. More precisely, $\tilde{\Sigma}_s$ is a Morse flow line if there exist smooth self-expanders $\Sigma$ and $\Gamma$ such that
\begin{align*}
	\lim_{s \to -\infty} \mathcal{H}^n \llcorner \Sigma_s = \mathcal{H}^n \llcorner \Sigma \text{ and } \lim_{s \to \infty} \mathcal{H}^n \llcorner \Sigma_s = \mathcal{H}^n \llcorner \Gamma. 
\end{align*}
In particular, a Morse flow line is an ancient solution of \cref{rescaled-mcf-equation} (that is, flows that are defined on $(-\infty, S)$). We shall also distinguish between the notions of a \textit{smooth} Morse flow line, which is smooth on $(-\infty, \infty)$, and a \textit{singular} Morse flow line (which must be described using a suitable notion of weak MCF - see \cref{preliminaries}). Even though the self-expanders $\Sigma$ and $\Gamma$ are smooth, the flow lines are not going to be smooth in general (see \cite[Proposition 5.7]{ChenII} for an explicit example). The biggest difficulty in the study of singular flow lines is the possibility that they may not be canonically continued past a singularity due to the aforementioned nonuniqueness phenomenon with singular initial data. In \cite{BCW}, we constructed a natural singular Morse flow line when the flow is expander mean convex (see also \cref{existence-flow-line}). \par 
In this article, we are interested in another special class of MCFs coming out of $\mathcal{C}$. Fix a smooth self-expander $\Sigma$ asymptotic to $\mathcal{C}$, we consider RMCFs $\{\tilde{\Sigma}_s\}$ on $(-\infty,0)$ such that 
\begin{align}
\label{asymptotic-conical-cond}
	\lim_{s \to -\infty}  \tilde{\Sigma}_{s} =  \Sigma \text{ in } C^{\infty}_{\loc}(\R^{n+1}).
\end{align}
Such a flow will be called a \textit{tame ancient} RMCF (in general there could exist very wild ancient RMCFs that do not have a well-defined backward limit or even arise as MCFs out of a cone). It is easy to see that a tame ancient RMCF gives rise to a MCF coming out of $\mathcal{C}$ in view of \cref{change-of-variable}. The converse is, in general, not true - see \cref{cone-section} for some related discussion. \par 
Our first result is an existence theorem for tame ancient RMCFs when $\Sigma$ is unstable. In particular, this gives examples of many non-self-similar flows coming out of a cone with more than one asymptotically conical self-expanders (see \cref{preliminaries} for terminologies). 
\begin{thm}
\label{main-theorem}
	Let $\mathcal{C} \subset \R^{n+1}$ be a smooth cone and suppose $\Sigma$ is an unstable smooth self-expander of index $I$ asymptotic to $\mathcal{C}$, then there exists an $I$-parameter family of distinct tame ancient RMCFs $\{\tilde{\Sigma}_s\}_{s \in (-\infty,0]}$ such that 
	\begin{align*}
		\lim_{s \to -\infty} \tilde{\Sigma}_s = \Sigma \text{ in } C^{\infty}_{\loc}(\R^{n+1}).
	\end{align*}
	In particular, there exists an $I$-parameter family of smooth MCFs coming out of $\mathcal{C}$. 
\end{thm}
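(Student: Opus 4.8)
The plan is to construct the $I$-parameter family of tame ancient RMCFs by a fixed-point/iteration argument, treating the unstable self-expander $\Sigma$ as an equilibrium of the RMCF and building solutions on the unstable manifold. The linearization of \cref{rescaled-mcf-equation} at $\Sigma$ is governed by the stability operator $L_\Sigma = \Delta_\Sigma + |A_\Sigma|^2 + \tfrac12 \langle \xX, \nabla_\Sigma \cdot\rangle - \tfrac12$ (the drift-Laplacian plus the Jacobi term), acting on the appropriate weighted $L^2$ space with weight $e^{-|\xX|^2/4}$ on the asymptotically conical end. Because $\Sigma$ has index $I$, this operator has exactly $I$ positive eigenvalues $\mu_1 \geq \cdots \geq \mu_I > 0$ (counted with multiplicity) with $L^2_{\mathrm{weighted}}$ eigenfunctions $\phi_1,\dots,\phi_I$; these span the unstable subspace $E^+$. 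First I would set up the right functional-analytic framework — choosing weighted Hölder or Sobolev spaces adapted to the conical geometry in which $L_\Sigma$ is sectorial and its spectrum above $0$ is discrete and finite — so that the graphical RMCF over $\Sigma$, written as $\partial_s u = L_\Sigma u + Q(u, \nabla u, \nabla^2 u)$ with $Q$ quadratically small, falls under a standard invariant-manifold / Lyapunov–Perron setup.

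The key steps, in order: (1) Write the RMCF as a graph $u(\cdot, s)$ over $\Sigma$ with normal speed $\partial_s u = L_\Sigma u + Q(u)$, and verify the required mapping properties of the nonlinearity $Q$ on the chosen weighted spaces (it vanishes to second order and respects the asymptotically conical structure, so graphs over $\Sigma$ stay asymptotic to $\mathcal C$). (2) For a choice of $\mathbf a = (a_1,\dots,a_I)$ in a small ball of $\R^I$, seek a solution on $(-\infty, 0]$ of the form $u(s) = \sum_{i=1}^I a_i e^{\mu_i s}\phi_i + w(s)$ where $w(s)$ lies in the complementary (center–stable) subspace and decays faster as $s \to -\infty$; this is the Lyapunov–Perron fixed-point equation $u(s) = \sum a_i e^{\mu_i s}\phi_i + \int_{-\infty}^s e^{(s-\sigma)L^+} P^+ Q(u(\sigma))\,d\sigma + \int_{0}^s e^{(s-\sigma)L^-}P^- Q(u(\sigma))\,d\sigma$, solved by contraction in a weighted sup-norm $\sup_{s\le 0} e^{-\nu s}\|u(s)\|$ for a suitable $\nu \in (0,\mu_I)$. (3) Show the fixed point is smooth by parabolic bootstrapping and that distinct $\mathbf a$ yield distinct flows (the leading exponential asymptotics $a_i e^{\mu_i s}\phi_i$ as $s \to -\infty$ are different, so the flows cannot coincide), giving a genuine $I$-parameter family. (4) Invert the change of variables \cref{change-of-variable} to produce the corresponding $I$-parameter family of MCFs emerging from $\mathcal C$, using that $\tilde\Sigma_s \to \Sigma$ smoothly and locally as $s \to -\infty$ translates, via $s = \log t$ and $\Sigma_t = \sqrt t\,\tilde\Sigma_{\log t}$, into $\sqrt t^{-1}\Sigma_t \to \Sigma$ and hence $\mathcal H^n\llcorner(\sqrt t\,\Sigma) \to \mathcal H^n\llcorner \mathcal C$.

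The main obstacle I expect is the functional-analytic setup on the noncompact, asymptotically conical self-expander: unlike the compact (closed) case where the stability operator is automatically discrete and the semigroup estimates are textbook, here one must handle the end, where the weight $e^{-|\xX|^2/4}$ degenerates in the sense that the relevant function spaces must simultaneously control growth at infinity (so that "index" and "unstable eigenfunctions" are well-defined, cf. the weighted spaces used in the self-expander literature) and ensure the constructed graphs remain genuinely asymptotic to $\mathcal C$ at every time $s \le 0$, uniformly. Establishing that $L_\Sigma$ generates an analytic semigroup with the spectral gap between $E^+$ and the rest, together with the decay estimates for $e^{sL^-}P^-$ on the conical end, is the technical heart; once that is in place, the Lyapunov–Perron contraction and the bootstrapping are routine. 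A secondary point to be careful about is that the $I$ parameters should be shown to parametrize the flows injectively up to the natural identification, i.e.\ that no nontrivial coincidences occur among the leading-order asymptotics, which follows from linear independence of the $\phi_i$ but should be stated cleanly.
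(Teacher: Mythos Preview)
Your approach is essentially the same as the paper's: write the RMCF as a normal graph $v$ over $\Sigma$ solving $\partial_s v = L_\Sigma v + Q(v,\nabla v,\nabla^2 v)$, build an explicit solution operator for the inhomogeneous linear problem via the eigenfunction expansion (which is exactly your Lyapunov--Perron integral formula), establish exponentially weighted Schauder estimates, and close by a contraction mapping in a Banach space with norm $\sup_{s\le 0} e^{-\delta s}\|v\|_{C^{2,\alpha}_P(\Sigma\times[s-1,s])}$.

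One correction worth flagging: the weight for the self-expander stability operator is $e^{+|\xX|^2/4}$, not $e^{-|\xX|^2/4}$. This sign matters for the very obstacle you highlight. With the correct (growing) weight, eigenfunctions of $L_\Sigma$ decay like $e^{-\beta|\xX|^2}$ for any $\beta<\tfrac12$ (see \cref{eigenfunction-decay}), so the unstable modes are automatically rapidly decaying at infinity and the constructed graphs stay asymptotic to $\mathcal{C}$ with no extra work. In particular, unlike the self-shrinker case, one does \emph{not} need weighted H\"older spaces or delicate end control here; ordinary $C^{2,\alpha}$ Schauder estimates suffice, and the paper makes exactly this point. So the ``main obstacle'' you anticipate is considerably milder than you suggest, and your worry about the weight ``degenerating'' at infinity stems from having the shrinker picture in mind rather than the expander one.
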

\cref{main-theorem} is a non-compact analogue of the general existence theorem of ancient gradient flows of certain elliptic functionals proven by Choi--Mantoulidis \cite{ChoiMantoulidis}. The method is also adapted to self-shrinkers \cite[Section 6]{CCMSGeneric}. We also mention the recent work of Sun--Wang \cite{SunWang} where they constructed an $I$-parameter family of translators asymptotic to $\Sigma \times \R$ for a self-shrinker $\Sigma$ of index $I$. The space of solutions will correspond to the space of eigenfunctions of negative eigenvalues. \par 
If, in addition, the cone $\mathcal{C}$ is generic, in the sense that any self-expander asymptotic to $\mathcal{C}$ has nontrivial Jacobi field, we also have the following strong uniqueness result:
\begin{thm}
	\label{main-uniqueness-theorem}
	Suppose $\mathcal{C} \subset \mathbb{R}^{n+1}$ is a generic smooth cone. Suppose $\tilde{\mathcal{M}} = \{\tilde{\Sigma}_s\}_{s \in (-\infty,0)}$ is a smooth RMCF such that
	\begin{align}
		\label{back-convergence}
		\lim_{s \to -\infty}  \tilde{\Sigma}_{s} =  \Sigma \text{ in } C^{\infty}_{\loc}(\R^{n+1})
	\end{align}
	for some smooth self-expander $\Sigma$ asymptotic to $\mathcal{C}$. Then $\Sigma$ is unstable and $\tilde{\mathcal{M}}$ is either the static flow of $\Sigma$ or coincides with one of the tame ancient RMCFs constructed from \cref{main-theorem}.
\end{thm}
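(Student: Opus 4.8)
The plan is to read \cref{main-uniqueness-theorem} as an ancient--gradient--flow uniqueness statement of the type proven by Choi--Mantoulidis \cite{ChoiMantoulidis} (and adapted to self-shrinkers in \cite[Section~6]{CCMSGeneric}), the genuinely new feature being that $\Sigma$ is non-compact with a conical end, so every piece of linear analysis must be done in function spaces weighted to see both the factor $\wt$ and the decay of $\tilde{\Sigma}_s$ towards $\mathcal{C}$ at infinity. The first step is a graphical reduction: for $s$ sufficiently negative, write $\tilde{\Sigma}_s$ as the normal graph over $\Sigma$ of a function $u(\cdot,s)$, small in the relevant weighted parabolic norm, with $u(\cdot,s)\to 0$ as $s\to-\infty$. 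On compact sets this is immediate from \cref{back-convergence}; the real issue is to propagate smallness to the conical end, which requires showing $\tilde{\Sigma}_s$ stays $C^{k,\alpha}$-asymptotic to $\mathcal{C}$ (a barrier/pseudolocality argument for \cref{rescaled-mcf-equation} near the cone, using that $\Sigma$ itself is asymptotic to $\mathcal{C}$) and then upgrading $C^\infty_{\loc}$ convergence to uniform weighted convergence via Schauder estimates. In these coordinates the flow becomes $\partial_s u=\mathcal{L}_\Sigma u+\mathcal{Q}(u)$, with $\mathcal{L}_\Sigma$ the self-adjoint weighted Jacobi operator of $\Sigma$ and $\mathcal{Q}$ at least quadratic in $u$ and its derivatives.

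Next I would show $\Sigma$ is unstable; this is where the genericity of $\mathcal{C}$ is essential (without it the static flow of a stable $\Sigma$ asymptotic to $\mathcal{C}$ would be a counterexample). If $\Sigma$ were stable, then $-\mathcal{L}_\Sigma\ge 0$, and since $\mathcal{C}$ is generic $\Sigma$ carries a nontrivial Jacobi field, so $0$ is the bottom of the spectrum of $-\mathcal{L}_\Sigma$ and is attained by a Jacobi field of a fixed sign; a rigidity input for stable asymptotically conical self-expanders (using such a positive ground state together with the identity $H=\tfrac12\langle\xX,\nu\rangle$) then forces $\Sigma$ to be a hyperplane through the origin, contradicting genericity of $\mathcal{C}$. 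So $\Sigma$ has index $I\ge 1$. Decompose $u(\cdot,s)=u_++u_0+u_-$ along the spectral subspaces of $-\mathcal{L}_\Sigma$: the $I$-dimensional unstable space $E_+$, the finite-dimensional kernel $E_0$ of Jacobi fields, and the complement $E_-$ on which $-\mathcal{L}_\Sigma$ is coercive. Using the quadratic bound on $\mathcal{Q}$ one derives the standard differential inequalities for $\norm{u_+}^2,\norm{u_0}^2,\norm{u_-}^2$ and applies a Merle--Zaag ODE lemma: since $u$ is bounded and tends to $0$ as $s\to-\infty$, the stable mode cannot dominate, so either $u\equiv 0$ for $s\ll 0$ (then $\tilde{\mathcal{M}}$ is the static flow, by backward uniqueness of \cref{rescaled-mcf-equation}), or $\norm{u_+}$ dominates, or $\norm{u_0}$ dominates.

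The heart of the proof is excluding the neutral alternative $\norm{u_0}\gg\norm{u_+}+\norm{u_-}$, and here genericity cuts both ways: it guarantees $E_0\ne\{0\}$, so a center manifold $\mathcal{W}^c$ at $\Sigma$ is genuinely present and must be dealt with. I would run a Lyapunov--Schmidt reduction: near $\Sigma$ the RMCF is, up to exponentially small errors, the gradient flow on $\mathcal{W}^c$ of the reduced functional $g$ coming from the expander functional $\Sigma'\mapsto\int_{\Sigma'}\wt$ (whose critical points are exactly the self-expanders), so a backward orbit into $\Sigma$ with dominant neutral part would be a nonconstant backward gradient orbit of $g$ through its critical point $\Sigma$. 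One must show this cannot happen --- e.g.\ that the set of self-expanders asymptotic to $\mathcal{C}$ near $\Sigma$ is a smooth manifold on which $\Sigma$ is a strict local extremum of $g$ (so $\mathcal{W}^c$ carries no such orbit), or, when the Jacobi fields are obstructed rather than integrable, using a {\L}ojasiewicz--Simon inequality for the expander functional near $\Sigma$ to constrain the reduced dynamics. I expect this step to be the main obstacle, precisely because genericity forces the nullity to be positive.

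Once $\norm{u_+}$ dominates, the local unstable manifold theorem for \cref{rescaled-mcf-equation} at $\Sigma$ --- the same object on which the family constructed in \cref{main-theorem} lives --- places $\tilde{\mathcal{M}}$ on $\mathcal{W}^u(\Sigma)$ and gives it a well-defined leading coefficient $a\in E_+$ as $s\to-\infty$. Matching $a$ with the corresponding member $\tilde{\mathcal{M}}_a$ of that $I$-parameter family and invoking uniqueness of smooth RMCFs that are backward-asymptotic to $\Sigma$ with prescribed leading $E_+$-data (a contraction-mapping argument together with backward uniqueness, as in \cite{ChoiMantoulidis}) yields $\tilde{\mathcal{M}}=\tilde{\mathcal{M}}_a$; and if $a=0$ then, by the previous step, $u\equiv 0$ and $\tilde{\mathcal{M}}$ is the static flow of $\Sigma$. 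Forward uniqueness of smooth solutions of \cref{rescaled-mcf-equation} then identifies the two flows on their common interval of definition, completing the proof.
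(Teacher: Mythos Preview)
Your proposal rests on a misreading of ``generic.'' Despite a typo in the introduction, throughout the paper a cone $\mathcal{C}$ is generic when every self-expander asymptotic to $\mathcal{C}$ has \emph{no} nontrivial Jacobi field; see the sentence immediately following \cref{main-uniqueness-theorem} (genericity ``is used to rule out the scenario \ldots where the neutral mode \ldots becomes dominant''), the deduction of \cref{main-uniqueness-theorem} in \cref{uniqueness-section} (``As $\mathcal{C}$ is generic, $\Sigma$ has no nontrivial Jacobi field''), and the proof of \cref{lojasiewicz-relative-entropy}, which requires the spectral gap $\norm{L_\Sigma v}_W \ge c\norm{v}_W$. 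With the correct reading $E_0=\{0\}$, so what you call ``the heart of the proof'' --- excluding the neutral alternative via a center-manifold reduction or a {\L}ojasiewicz--Simon argument on $\mathcal{W}^c$ --- is vacuous. The paper simply observes that with no neutral mode the Merle--Zaag dichotomy leaves two options: the flow is static, or the unstable mode $V_-$ dominates; it then feeds the latter into the spectral uniqueness \cref{strong-uniqueness}, which pins down the sharp exponential rate and matches the flow, after a time translation, to one of the solutions from \cref{ancient-solution-existence}.

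Your instability argument is likewise built on the wrong premise. You assume genericity \emph{produces} a positive Jacobi field on a stable $\Sigma$ and then invoke an unspecified rigidity step to force $\Sigma$ to be a hyperplane; neither the hypothesis nor that rigidity is available here. Under the correct definition, a stable $\Sigma$ asymptotic to a generic cone is \emph{strictly} stable, so $V_-\equiv 0$ and the Merle--Zaag lemma forces the flow to be static; the clause ``$\Sigma$ is unstable'' in the theorem should be read as applying in the non-static case. The remaining components of your outline --- the graphical reduction via barriers and pseudolocality at the conical end (\cref{properties-of-the-flow} and \cref{backwards-convergence}), the spectral decomposition and ODE inequalities, and the final matching via the contraction-mapping uniqueness in \cref{ancient-solution-existence} --- are correct and coincide with what the paper does.
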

The proof of \cref{main-uniqueness-theorem} relies on the Merle--Zaag ODE lemma \cref{ode-lemma}, which has found much recent success in various classification problems for ancient geometric flows, see for example \cite{ADS}, \cite{ChoiMantoulidis}, \cite{CHHAncient}, \cite{CHHW}, and \cite{DuHaslhofer}. The genericity of $\mathcal{C}$ is used to rule out the scenario in \cref{ode-lemma} where the neutral mode (i.e., nontrivial Jacobi fields) become dominant as $s \to -\infty$. We suspect that \cref{main-uniqueness-theorem} is not true without the genericity assumption. For instance, if there exists a 1-parameter family of weakly stable self-expanders $\{\Sigma^a\}_{a \in (0,1)}$ asymptotic to $\mathcal{C}$, it is possible that there exists a Morse flow line between $\Sigma^{a_1}$ and $\Sigma^{a_2}$, which will violate both conclusions of the theorem. \par
Since every Morse flow line is a tame ancient RMCF, we immediately have the following consequence of \cref{main-uniqueness-theorem}
\begin{cor}
\label{main-cor}
Every Morse flow line starting from a smooth self-expander $\Sigma$ asymptotic to a generic cone agrees with one of the tame ancient RMCF starting from $\Sigma$ constructed in \cref{main-theorem}.
\end{cor}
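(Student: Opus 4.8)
The plan is to recognize a Morse flow line from $\Sigma$ as a non-static tame ancient RMCF and then quote \cref{main-uniqueness-theorem} directly. Let $\tilde{\mathcal M}=\{\tilde\Sigma_s\}$ be a Morse flow line with $\lim_{s\to-\infty}\mathcal{H}^n\llcorner\tilde\Sigma_s=\mathcal{H}^n\llcorner\Sigma$ and $\lim_{s\to+\infty}\mathcal{H}^n\llcorner\tilde\Sigma_s=\mathcal{H}^n\llcorner\Gamma$ for smooth self-expanders $\Sigma,\Gamma$, with $\Sigma$ asymptotic to a generic cone $\mathcal{C}$. A Morse flow line is non-constant, and by strict monotonicity of the expander functional along non-static solutions of \cref{rescaled-mcf-equation} this forces $\Gamma\neq\Sigma$; in particular $\tilde{\mathcal M}$ is not the static flow of $\Sigma$. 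Hence, once we know $\tilde{\mathcal M}$ satisfies the hypotheses of \cref{main-uniqueness-theorem}, that theorem leaves only the possibility that $\tilde{\mathcal M}$ coincides with one of the tame ancient RMCFs produced in \cref{main-theorem}, which is exactly the claim.

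The point needing elaboration is that a Morse flow line is a tame ancient RMCF --- that is, that the merely measure-theoretic backward convergence built into its definition can be upgraded to the $C^\infty_{\loc}$ convergence of \cref{back-convergence} on some interval $(-\infty,s_0)$ on which $\tilde{\mathcal M}$ is smooth. Undoing the rescaling \cref{change-of-variable}, $\tilde{\mathcal M}$ corresponds to an integral Brakke flow coming out of $\mathcal{C}$; since it converges as $s\to-\infty$ to the multiplicity-one smooth self-expander $\Sigma$, which is asymptotically conical and hence of finite entropy, Huisken's monotonicity formula together with White's local regularity theorem give $s_0<0$ and uniform curvature bounds on every compact subset of $\R^{n+1}$ for $s<s_0$ (the behaviour near the vertex being modeled on the smooth hypersurface $\Sigma$ itself). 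Standard interior parabolic estimates then promote the varifold convergence to convergence in $C^\infty_{\loc}(\R^{n+1})$, so that for $s<s_0$ each $\tilde\Sigma_s$ is the normal graph over $\Sigma$ of a function $u(\cdot,s)$ with $\norm{u(\cdot,s)}_{C^k(K)}\to 0$ for every compact $K\subset\R^{n+1}$ and every $k$; in particular $\{\tilde\Sigma_s\}_{s<s_0}$ is a smooth RMCF obeying \cref{back-convergence}.

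Granting this, the proof of \cref{main-uniqueness-theorem} runs on $(-\infty,s_0)$: through the Merle--Zaag dichotomy \cref{ode-lemma} applied to the linearized evolution of $u$, it is a statement about the germ of the flow at $s=-\infty$, with genericity of $\mathcal{C}$ ruling out the neutral (Jacobi-field) mode from dominating; its backward-uniqueness step identifies $\tilde{\mathcal M}$ with a specific constructed solution on $(-\infty,s_0)$, and forward uniqueness for the smooth RMCF equation extends this identification for as long as $\tilde{\mathcal M}$ remains smooth. I expect the regularity upgrade of the previous paragraph --- and, for a singular Morse flow line, the production of the smooth window $(-\infty,s_0)$ --- to be the only substantive step; if the fact that every Morse flow line is a tame ancient RMCF has already been recorded (e.g.\ in \cref{preliminaries}), then the corollary is indeed immediate from \cref{main-uniqueness-theorem} together with the non-staticness noted above.
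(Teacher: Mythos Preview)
Your proposal is correct and follows the same route as the paper, which records the corollary in one line: ``Since every Morse flow line is a tame ancient RMCF, we immediately have the following consequence of \cref{main-uniqueness-theorem}.'' You have correctly isolated the only substantive step --- upgrading the measure-theoretic backward convergence in the definition of a Morse flow line to the $C^\infty_{\loc}$ convergence required by \cref{main-uniqueness-theorem} --- and your argument via Brakke/White regularity at the multiplicity-one smooth limit $\Sigma$ is exactly what the paper uses implicitly (compare the proof of \cref{backwards-convergence}).

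One minor remark: your effort to rule out the static case is not strictly necessary for the corollary as stated. In \cref{ancient-solution-existence} the parameter $a=(a_1,\dots,a_I)$ ranges over a ball containing the origin, and $a=0$ yields the static solution; so the static flow is itself among the tame ancient RMCFs ``constructed in \cref{main-theorem},'' and the dichotomy in \cref{main-uniqueness-theorem} collapses to a single conclusion. Your monotonicity argument is fine, but you should phrase it via the relative expander entropy $E_{\rel}$ (\cref{forward-monotonicity}) rather than the expander functional $E$, which is infinite on asymptotically conical hypersurfaces.
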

The above corollary gives a reasonably complete picture of the flow lines asymptotic to a generic cone. We point out that the converse does not hold, as being a tame ancient flow tells nothing about the forward limit $\Gamma$. In fact, the forward limit $\Gamma$ appears to be much more complicated to analyze. One particular case in which we can deduce regularity of $\Gamma$ is when $\mathcal{C}$ is generic and $\Sigma$ has index 1. Indeed, by \cref{main-cor}, any Morse flow line belongs to the 1-parameter family of tame ancient RMCF starting from $\Sigma$ corresponding to its first eigenfunction. This implies that the flow actually lies on one side of $\Sigma$, and thus $\Gamma$ is stable (see \cite[Proposition 5.1]{BWTopologicalUniqueness}). In particular, $\Gamma$ has a singular set of codimension at least 7 and is smooth in low dimensions.  \par 
Recall that a tame ancient RMCF starting from an asymptotically conical self-expander $\Sigma$ gives rise to a MCF starting from its asymptotic cone. It is therefore natural to ask when the same conclusion of \cref{main-uniqueness-theorem} holds for a general MCF coming out of the cone.
This problem turns out to be more subtle as a MCF coming out of the cone is not necessarily a tame ancient RMCF. The failure of this correspondence is essentially the failure of the uniqueness of tangent flows, which, in turn, is due to the lack of regularity of the backward limit. To elaborate, given a MCF, $\mathcal{M}$, coming out of $\mathcal{C}$, the corresponding RMCF, $\tilde{\mathcal{M}}$ (via \cref{change-of-variable}), only satisfies 
\begin{align*}
	\lim_{i \to \infty} \tilde{\mu}_{s_i} = \mathcal{H}^n \llcorner \Sigma,
\end{align*}
for some subsequence $s_i \to -\infty$ and self-expander $\Sigma$ (not necessarily smooth - see \cref{backward-limit}). To upgrade the subsequential convergence to full convergence, the standard tool is the \L ojasiewicz--Simon inequality \cite{SimonParabolic} (cf. \cite{Schulze}, \cite{ChodoshSchulze} and \cite{CMUniqueness} for various applications in MCF). In our case, we will prove a \L ojasiewicz inequality when the cone is generic.
\begin{thm}[\L ojasiewicz inequality for generic cones, \cref{lojasiewicz-relative-entropy}]
	Let $\Sigma$ be a smooth self-expander asymptotic to a generic cone $\mathcal{C}$. There is $\eps = \eps(\Sigma)$ such that the following holds: suppose $v \in C^{2,\alpha} \cap W^2(\Sigma)$ satisfies $\norm{v}_{C^{2,\alpha}}  < \eps$, then 
	\begin{align*}
		C\norm{\mathcal{N}_\Sigma(v)}_{W} \ge \abs{E_{\rel}^*[\Sigma_v,\Sigma]}^{1/2},
	\end{align*}
	where $\mathcal{N}_\Sigma$ is the Euler-Lagrange operator associated to the relative expander entropy $E_{\rel}$.
\end{thm}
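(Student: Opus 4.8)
The asserted estimate is a \L ojasiewicz--Simon inequality for the relative expander entropy, and the role of the genericity hypothesis is to make the Jacobi operator of $\Sigma$ invertible, which is precisely what permits the sharp exponent $1/2$ without any Lyapunov--Schmidt reduction. I would first record the functional-analytic framework, most of which is set up in \cref{preliminaries} following Bernstein--Wang: on a $C^{2,\alpha}$-neighborhood $\mathcal{U}$ of $0$ in the space of normal graph functions over $\Sigma$ that also lie in $W^2(\Sigma)$, the map $v \mapsto E_{\rel}^*[\Sigma_v,\Sigma]$ is a smooth real-valued functional with $E_{\rel}^*[\Sigma_0,\Sigma]=0$; its gradient with respect to the weighted $L^2$-pairing of $W$ is the Euler--Lagrange operator $\mathcal{N}_\Sigma$, a quasilinear second-order elliptic operator with $\mathcal{N}_\Sigma(0)=0$ whose linearization at $0$ is the Jacobi (stability) operator $L_\Sigma$; and $L_\Sigma\colon W^2(\Sigma)\to W$ is self-adjoint and Fredholm of index zero with discrete spectrum. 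The genericity of $\mathcal{C}$ is used only here, through the consequence that $L_\Sigma$ has trivial kernel on $W^2(\Sigma)$ (so that $\Sigma$ carries no nontrivial Jacobi field), whence $L_\Sigma\colon W^2(\Sigma)\to W$ is a Banach-space isomorphism and there is $c=c(\Sigma)>0$ with $\norm{L_\Sigma v}_W\ge c\,\norm{v}_{W^2}$ for all $v\in W^2(\Sigma)$.

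The inequality follows from two quantitative estimates. \emph{(i) Coercivity of $\mathcal{N}_\Sigma$.} Writing $\mathcal{N}_\Sigma(v)=L_\Sigma v+Q(v)$ where $Q(v)$ collects the terms of order $\ge 2$, the explicit quasilinear form of $\mathcal{N}_\Sigma$ yields a remainder bound $\norm{Q(v)}_W\le C\,\norm{v}_{C^{2,\alpha}}\,\norm{v}_{W^2}$ for $v\in\mathcal{U}$; together with the isomorphism estimate above this gives $\norm{\mathcal{N}_\Sigma(v)}_W\ge(c-C\eps)\norm{v}_{W^2}\ge\tfrac12 c\,\norm{v}_{W^2}$ once $\eps=\eps(\Sigma)$ is small enough. \emph{(ii) Quadratic control of the entropy.} The path $t\mapsto\Sigma_{tv}$, $t\in[0,1]$, stays in $\mathcal{U}$, so the fundamental theorem of calculus and the first-variation identity $\frac{d}{dt}E_{\rel}^*[\Sigma_{tv},\Sigma]=\inner{\mathcal{N}_\Sigma(tv),v}_W$ give
\begin{align*}
	\abs{E_{\rel}^*[\Sigma_v,\Sigma]} &= \abs{\int_0^1 \inner{\mathcal{N}_\Sigma(tv),v}_W\,dt} \\
	&\le \norm{v}_W\int_0^1\norm{\mathcal{N}_\Sigma(tv)}_W\,dt \le C\,\norm{v}_{W^2}^2,
\end{align*}
using $\norm{\mathcal{N}_\Sigma(tv)}_W\le C\,t\,\norm{v}_{W^2}$ (from $\mathcal{N}_\Sigma(0)=0$, finiteness of $\norm{L_\Sigma}$, and the bound in (i)) and $\norm{v}_W\le\norm{v}_{W^2}$. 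Combining (i) and (ii), $\abs{E_{\rel}^*[\Sigma_v,\Sigma]}\le C\,\norm{v}_{W^2}^2\le(4C/c^2)\,\norm{\mathcal{N}_\Sigma(v)}_W^2$, which is the desired inequality with $C$ replaced by $2\sqrt{C}/c$. No sign information on $E_{\rel}^*$ enters, consistent with $\Sigma$ being a saddle of the entropy when its index is positive.

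The substantive work lies in the infrastructure rather than in the short calculus above: (a) verifying that $E_{\rel}^*$ and $\mathcal{N}_\Sigma$ are well-defined and smooth on a $C^{2,\alpha}$-neighborhood despite the noncompactness of $\Sigma$ and the growth of the conformal weight $\wt$, where the asymptotically conical geometry and the decay built into $W^2(\Sigma)$ are what legitimize the renormalization and the integrations by parts; (b) the weighted Schauder and Fredholm theory placing $L_\Sigma$ as a self-adjoint operator with discrete spectrum on the scale of weighted spaces, and the precise sense in which ``$\mathcal{C}$ generic'' forbids a Jacobi field on $\Sigma$; and (c) the mixed-norm remainder estimate $\norm{\mathcal{N}_\Sigma(v)-L_\Sigma v}_W\le C\,\norm{v}_{C^{2,\alpha}}\,\norm{v}_{W^2}$, which is the one genuinely new computation but is routine once the function spaces are fixed. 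I expect (a) and (b) to be the principal obstacle, though both are largely available from the existing theory of asymptotically conical self-expanders and would be invoked rather than reproved. Without genericity one would instead have to carry out a Lyapunov--Schmidt reduction onto $\ker L_\Sigma$ and apply the classical finite-dimensional \L ojasiewicz gradient inequality, and the exponent $1/2$ would in general degrade unless the Jacobi fields of $\Sigma$ were known to integrate to a smooth family of self-expanders.
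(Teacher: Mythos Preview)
Your proposal is correct and follows essentially the same two-step strategy as the paper: establish coercivity $\norm{\mathcal{N}_\Sigma(v)}_W \gtrsim \norm{v}_{W^2}$ from invertibility of $L_\Sigma$ (the paper routes this through the known isomorphism $\mathcal{L}_\Sigma = L_\Sigma - \abs{A_\Sigma}^2$ from \cite{BWIntegerDegree} together with the spectral gap, rather than citing Fredholm theory directly), and combine with a quadratic bound $\abs{E_{\rel}^*[\Sigma_v,\Sigma]} \lesssim \norm{v}_{W^2}^2$. The only notable variation is in the second step: the paper obtains the quadratic bound directly from the explicit expansion of $E_{\rel}^*$ in \cref{motivating-computation}, whereas you integrate the first-variation identity along $t\mapsto tv$; both are equally valid and yield the same estimate.
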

\begin{rem}
	Without the genericity assumption, one instead expects a constant $\gamma \in (0,1)$ such that 
	\begin{align}
		\label{better-lojasiewicz}
		C\norm{\mathcal{N}_\Sigma(v)}_{W} \ge \abs{E_{\rel}[\Sigma_v,\Sigma]}^{1- \gamma/2},
	\end{align}
	which is the usual \L ojasiewicz inequality for $E_{\rel}$ proved by Park--Wang \cite{ParkWang} (here, for technical reasons, the two relative expander entropies $E_{\rel}$ and $E_{\rel}^*$ are defined differently - see \cref{uniqueness-section}). This is, of course, also enough to deduce the uniqueness of tangent. However, as \cref{main-uniqueness-theorem} requires genericity anyways, we have opted to prove the inequality in the generic case separately. We also emphasize that the proof of the theorem makes no use of the classical \L ojasiewicz--Simon inequality from \cite{SimonParabolic} and, therefore, also does not rely on the analyticity of the relative expander functional (which is indeed true). See \cite{FeehanMaridakis} for some related discussion on analyticity vs. Morse-Bott conditions. \par 
\end{rem}
The smoothness of $\Sigma$ is, unfortunately, essential in the above theorem. For this reason we only have a partial converse in the MCF case:
\begin{cor}
\label{main-cor-mcf}
	Suppose $2 \le n \le 6$ and $\mathcal{C} \subset \R^{n+1}$ is a generic cone. Suppose $\mathcal{M} = \{\Sigma_t\}_{t \in (0,T)}$ is a smooth MCF such that the corresponding RMCF satisfies 
	\begin{align}
	\label{subsequent-convergence}
		\lim_{i \to \infty} \mathcal{H}^n \llcorner \tilde{\Sigma}_{s_i} = \mathcal{H}^n \llcorner \Sigma
	\end{align}
	for some sequence $s_i \to -\infty$ and some \textbf{smooth} self-expander $\Sigma$ asymptotic to $\mathcal{C}$, then $\mathcal{M}$ agrees with one of the tame ancient RMCF starting from $\Sigma$ constructed in \cref{main-theorem}.
\end{cor}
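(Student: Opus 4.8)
The plan is to upgrade the subsequential convergence \cref{subsequent-convergence} to the full convergence $\lim_{s\to-\infty}\tilde{\Sigma}_s = \Sigma$ in $C^\infty_{\loc}(\R^{n+1})$; once this is known, $\tilde{\mathcal{M}} = \{\tilde{\Sigma}_s\}$ satisfies the hypotheses of \cref{main-uniqueness-theorem}, which identifies it with the static flow of $\Sigma$ or with one of the tame ancient RMCFs of \cref{main-theorem} --- in either case a tame ancient RMCF starting from $\Sigma$, as claimed. (The hypotheses already force $\mathcal{M}$ to emerge from $\mathcal{C}$: by pseudolocality the sheets $\tilde{\Sigma}_{s_i}$ are uniformly close to $\mathcal{C}$ near infinity since $\Sigma$ is, and unrescaling via \cref{change-of-variable} gives $\sqrt{t}\,\Sigma_t \to \mathcal{C}$.) The upgrade runs along the familiar \L ojasiewicz lines for uniqueness of limits of a geometric gradient flow (cf.\ \cite{SimonParabolic,Schulze,ChodoshSchulze,CMUniqueness}), with the relative expander entropy $E_{\rel}^*$ in the role of the Gaussian density and with \cref{lojasiewicz-relative-entropy} in the role of the classical \L ojasiewicz--Simon inequality. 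Put $f(s):=E_{\rel}^*[\tilde{\Sigma}_s,\Sigma]$. As \cref{rescaled-mcf-equation} is the negative $W$-gradient flow of the (relative) expander entropy, $f$ is non-increasing with $-f'(s)=\norm{\partial_s\tilde{\Sigma}_s}_W^2$ (the divergent part of the entropy is fixed by the asymptotic cone and drops out of the derivative); since $\mathcal{N}_\Sigma(0)=0$ we have $E_{\rel}^*[\Sigma,\Sigma]=0$, and along the given subsequence $f(s_i)\to 0$. Monotonicity then forces $f\le 0$ everywhere and $f(s)\to 0^-$ as $s\to-\infty$.

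The first substantive step is to write the flow as a small normal graph over $\Sigma$ for $s$ near $-\infty$. Since $\Sigma$ is smooth, embedded, and the limit in \cref{subsequent-convergence} has multiplicity one, the local regularity theory for (rescaled) mean curvature flow promotes \cref{subsequent-convergence} to $C^\infty_{\loc}$ convergence $\tilde{\Sigma}_{s_i}\to\Sigma$; combined with the asymptotic conicity of $\tilde{\Sigma}_s$ and $\Sigma$ and a uniform control of the flow along the conical ends (so that the graphical difference lies in the weighted space $W^2(\Sigma)$ with small norm), this yields for $i$ large normal graphs $\tilde{\Sigma}_{s_i}=\Sigma_{v(s_i)}$ with $\norm{v(s_i)}_{C^{2,\alpha}}+\norm{v(s_i)}_{W^2}\to 0$, so in particular $\abs{f(s_i)}\to 0$. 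Fix $\eps=\eps(\Sigma)$ as in \cref{lojasiewicz-relative-entropy} and choose $s_{i_0}$ with $\norm{v(s_{i_0})}_{C^{2,\alpha}}$ small. A continuity argument then shows that for all $s\le s_{i_0}$ the flow stays a graph $\tilde{\Sigma}_s=\Sigma_{v(s)}$ with $\norm{v(s)}_{C^{2,\alpha}}<\eps$: on any subinterval $[s,s_{i_0}]$ on which this holds, the length estimate below keeps $\norm{v}_{C^{2,\alpha}}$ away from $\eps$, while weighted parabolic Schauder estimates promote the $W^2$-bound to a $C^{2,\alpha}$-bound, so the set of admissible $s$ is both open and closed.

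On the region where $\tilde{\Sigma}_s=\Sigma_{v(s)}$ has small $C^{2,\alpha}$-norm, \cref{rescaled-mcf-equation} becomes $\partial_s v=\mathcal{N}_\Sigma(v)$ up to a tangential gauge term, so $-f'(s)=\norm{\partial_s\tilde{\Sigma}_s}_W^2$ is comparable to $\norm{\mathcal{N}_\Sigma(v(s))}_W^2$, and \cref{lojasiewicz-relative-entropy} gives $\norm{\partial_s\tilde{\Sigma}_s}_W\ge c\,\abs{f(s)}^{1/2}$. Hence
\[
	\norm{\partial_s\tilde{\Sigma}_s}_W=\frac{\norm{\partial_s\tilde{\Sigma}_s}_W^2}{\norm{\partial_s\tilde{\Sigma}_s}_W}\le\frac{-f'(s)}{c\,(-f(s))^{1/2}}=\frac{2}{c}\,\frac{d}{ds}\bigl((-f(s))^{1/2}\bigr),
\]
and since $(-f(s))^{1/2}$ is non-decreasing with limit $0$ as $s\to-\infty$, integration gives $\int_{-\infty}^{s_{i_0}}\norm{\partial_s\tilde{\Sigma}_s}_W\,ds\le\tfrac{2}{c}(-f(s_{i_0}))^{1/2}<\infty$. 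This closes the bootstrap and shows $v(s)$ is $W$-Cauchy as $s\to-\infty$, hence convergent in $W$; parabolic estimates upgrade the convergence to $C^\infty_{\loc}$, and the limit --- being a self-expander --- must be $\Sigma$ by \cref{subsequent-convergence}. So $\lim_{s\to-\infty}\tilde{\Sigma}_s=\Sigma$ in $C^\infty_{\loc}(\R^{n+1})$, and \cref{main-uniqueness-theorem} finishes the proof.

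The main obstacle is the non-compact, weighted analysis underlying the first two steps: obtaining the uniform graphical representation $\tilde{\Sigma}_s=\Sigma_{v(s)}$ with $v(s)\in C^{2,\alpha}\cap W^2(\Sigma)$ over all of the non-compact $\Sigma$, controlling how fast the conical ends of $\tilde{\Sigma}_s$ converge to $\mathcal{C}$ so that the weighted norms stay finite and small, and showing (via weighted parabolic estimates) that this representation persists along the flow down to $s=-\infty$. By contrast the \L ojasiewicz--ODE computation of the third step is routine once \cref{lojasiewicz-relative-entropy} and this graphical control are in hand. Finally, the smoothness of $\Sigma$ is indispensable because \cref{lojasiewicz-relative-entropy} requires it, and the restriction $2\le n\le 6$ enters only through the background regularity theory needed to make the objects under comparison smooth.
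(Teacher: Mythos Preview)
Your overall strategy is correct and matches the paper's: upgrade subsequential convergence to full convergence via a \L ojasiewicz argument, then invoke \cref{main-uniqueness-theorem}. The \L ojasiewicz--ODE step you sketch is indeed the routine part, and the paper carries it out in \cref{uniqueness-of-tangent} essentially as you describe.

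However, there is a genuine gap in how you handle what you yourself identify as ``the main obstacle,'' and relatedly your explanation of where the dimension restriction enters is incorrect. You say $2\le n\le 6$ is needed ``only through the background regularity theory needed to make the objects under comparison smooth.'' In fact the paper uses this restriction for a specific structural reason: by \cite[Proposition 8.21]{CCMSGeneric}, when $2\le n\le 6$ the outermost flows of $\mathcal{C}$ are smooth stable self-expanders, so any MCF coming out of $\mathcal{C}$ is \emph{trapped} between two asymptotically conical self-expanders. Trapping is exactly what resolves your ``main obstacle'': it gives the sharp spatial decay \cref{trapped-decay}, namely $\abs{v(p)}\le C\abs{\xX(p)}^{-n-1}e^{-\abs{\xX(p)}^2/4}$, which is what forces $v(\cdot,s)\in W^2(\Sigma)$ with small norm against the weight $e^{\abs{\xX}^2/4}$. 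Pseudolocality alone only gives $\abs{v(p)}\le C\abs{\xX(p)}^{-1}$ (cf.\ \cref{properties-of-the-flow}), which is nowhere near enough to place $v$ in the weighted space. Trapping is also what makes the forward monotonicity formula \cref{forward-monotonicity} for $E_{\rel}$ available (this is a result from \cite{BWRelativeEntropy} that needs trapping) and what lets one identify $E_{\rel}=E_{\rel}^*$ via \cref{entropy-equivalence}. Your proposal invokes monotonicity of $E_{\rel}^*$ and the smallness of $\norm{v}_{W^2}$ without supplying the trapping that actually justifies both. Once trapping is in place, the paper's iteration (using the extension lemma \cite[Lemma 2.2]{Schulze} and unique continuation \cite[Theorem 1.4]{Bernstein} to pin down the limit) runs as you outline.
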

\begin{rem}
	Here the dimension restriction is due to the trapping phenomenon, i.e. we need the flow $\mathcal{M}$ to be trapped between two asymptotically conical self-expanders, which is only known in low dimensions \cite[Proposition 8.21]{CCMSGeneric}.
\end{rem}
\begin{rem}
	The regularity of the subsequential limit is in general hard to check, but there are natural geometric conditions (i.e. entropy bounds) that force the limit to be smooth. In these cases, \cref{main-cor-mcf} results in a classification of MCFs coming out of generic cones of low entropy - see \cref{cor-r3} and \cref{cor-r4}.
\end{rem}
The rest of the article is organized as follows. In \cref{existence-section} we prove \cref{main-theorem}. In \cref{uniqueness-section} we first define a version of the relative expander entropy which gives the equation \cref{rescaled-mcf-equation} a gradient flow structure, and then carry out the spectral analysis to prove \cref{main-uniqueness-theorem}. In \cref{cone-section} we study MCFs coming out of cones and prove the \L ojasiewicz inequality \cref{lojasiewicz-relative-entropy}. We then prove a uniqueness of tangent flows result for generic cones and combine it with \cref{main-uniqueness-theorem} to show \cref{main-cor-mcf}. Finally, we give some applications of \cref{main-cor-mcf} in the low entropy case.

\subsection*{Acknowledgement}
I would like to thank my advisor, Jacob Bernstein, for his constant encouragement and numerous suggestions to the article. I thank Jiewon Park, Lu Wang and Junfu Yao for helpful conversations. 

\section{Preliminaries}
\label{preliminaries}
\subsection{Notations}	
By a \textit{(hyper)cone} we mean a set $\mathcal{C} \subset \mathbb{R}^{n+1}$ that is invariant under dilations, i.e. $\rho \mathcal{C} = \mathcal{C}$ for all $\rho > 0$. We say $\mathcal{C}$ is smooth if it is smooth away from the origin. We say a hypersurface $\Sigma$ is asymptotic to $\mathcal{C}$ if it is smoothly asymptotic to $\mathcal{C}$, in view of \cref{asymptotic-conical-cond}. \par
Given a (smooth) hypersurface $\Sigma \subset \mathbb{R}^{n+1}$ and any function $v: \Sigma \to \mathbb{R}$, let $\Sigma_v$ be the normal graph of $v$ over $\Sigma$, i.e. $\Sigma_v = \fF_v(\Sigma)$ where 
\begin{align*}
	\fF_v(p)  = \xX(p) + v(p)\nN_\Sigma(p), \; \; p \in \Sigma.
\end{align*} \par 
The time variable $t$ is always used in the unrescaled setting (i.e. MCF), and $s$ is always used in the rescaled setting (i.e. RMCF). 
\subsection{Integral Brakke flow}
\label{mcf-preliminaries}
We will be using notations from geometric measure theory below, and we refer to \cite{SimonBook} and \cite{Ilmanen} for the relevant definitions. Let $V(\mu)$ denote the associated integral varifold for an integral $n$-rectifiable Radon measure $\mu$. Let $\hH$ be the generalized mean curvature vector of $V(\mu)$ given by the first variation formula: for any compactly supported $C^1$ vector field,
\begin{align*}
	\int \Div_{V(\mu)} X d\mu = - \int \hH \cdot X d\mu.
\end{align*}
Given an open set $U \subset \mathbb{R}^{n+1}$, by an \textit{integral $n$-Brakke flow} in $U$ we mean a family of integral $n$-rectifiable Radon measures $\mathcal{M} = \{\mu_t\}_{t \in I}$ such that:
\begin{enumerate}[label = (\alph*)]
	\item For a.e. $t \in I$, $V(\mu)$ has locally bounded first variation and its generalized mean curvature vector $H$ is orthogonal to the approximating tangent space of $V(\mu)$ $x$-a.e.
	\item For any bounded interval $[a,b] \subset I$ and compact set $K \subset U$,
	\begin{align*}
		\int_a^b \int_K (1 + \abs{\hH}^2) d\mu_tdt < \infty.
	\end{align*}
	\item For $[a,b] \subset I$ and every $\phi \in C_c^1(U \times [a,b], \mathbb{R}^+)$,
	\begin{align*}
		\int \phi d\mu_b - \int \phi d\mu_a \le \int_a^b \int \left(-\phi\abs{\hH}^2 + \hH \cdot \nabla \phi + \frac{\partial \phi}{\partial t}\right)d\mu_tdt.
	\end{align*}
\end{enumerate}
Throughout the article, a Brakke flow is assumed to be integral, cyclic, and unit-regular (see eg. \cite{WhiteCurrentsChains}). This ensures that there is no sudden loss of mass along the flow. Any Brakke flow produced by elliptic regularization \cite{Ilmanen} will automatically have these properties. \par 
Under the rescaling $s = \log t$ and $\tilde{\mu}_s = \mu_t \circ t^{\frac{1}{2}}$, a Brakke flow becomes a rescaled Brakke flow, which satisfies the same conditions as above, with condition (c) replaced by 
\begin{enumerate}
	\item [(c')] For $[a,b] \subset I$ and every $\phi \in C_c^1(U \times [a,b], \mathbb{R}^+)$,
	\begin{align*}
		\int \phi d\mu_b - \int \phi d\mu_a \le \int_a^b \int \left(-\phi H\cdot(-\frac{\xX^\perp}{2} + \hH) + (-\frac{\xX^\perp}{2} + \hH) \cdot \nabla \phi + \frac{\partial \phi}{\partial t}\right)d\mu_tdt.
	\end{align*}
\end{enumerate}
Of course, a smooth rescaled Brakke flow satisfies the RMCF equation, \cref{rescaled-mcf-equation}. 
\subsection{Function Spaces}
Let $\Sigma$ be a smooth hypersurface. For $\alpha \in (0,1)$ we define the H\"{o}lder seminorm
\begin{align*}
	[f]_{\alpha; \Sigma} = \sup_{\substack{p \ne q \in \Sigma \\ q \in B_\delta^\Sigma(p)}} \frac{\abs{f(p) - f(q)}}{d_\Sigma(p,q)^{\alpha}},
\end{align*}
where $\delta$ is such that $B_\delta^\Sigma(p)$ is strictly geodesically convex. For $k$ an integer, define $C^{k,\alpha}(\Sigma)$ to be the H\"{o}lder space with norm 
\begin{align*}
	\norm{f}_{C^{k,\alpha}(\Sigma)} = \sum_{i=0}^k \sup_{\Sigma} \abs{\nabla_\Sigma^i f}  + [\nabla_\Sigma^k f]_{\alpha;\Sigma }.
\end{align*}
The parabolic H\"{o}lder seminorm is defined similarly as 
\begin{align*}
	[f]_{\alpha; \Sigma \times I} = \sup_{\substack{(p, t_1) \ne (q,t_2) \in \Sigma \times I \\ q \in B_\delta^\Sigma(p)}} \frac{\abs{f(p) - f(q)}}{d_\Sigma(p,q)^{\alpha} + \abs{t_1 - t_2}^{\alpha/2}}.
\end{align*}
The corresponding parabolic H\"{o}lder space $C^{k,\alpha}_P(\Sigma \times I)$ consists of functions $f$ such that 
\begin{align*}
	\norm{f}_{C^{k,\alpha}_P(\Sigma \times I)} = \sum_{i+2j \le k} \sup_{\Sigma \times I} \abs{\nabla_\Sigma^i \nabla_t^j f} + \sum_{i+2j = k} [\nabla_\Sigma^i \nabla_t^j f]_{\alpha; \Sigma \times I}.
\end{align*}
We shall write $C^{k,\alpha} = C^{k,\alpha}(\Sigma)$ and $C^{k,\alpha}_P = C^{k,\alpha}_P(\Sigma \times (-\infty,0])$ when $\Sigma$ is clear from context.

Finally we define the weighted Sobolev space. For an integer $k$ we say $f \in W^k_{\frac{1}{4}}(\Sigma)$ if
\begin{align*}
	\norm{f}_{W^k_{\frac{1}{4}}} = \norm{f}_{W^k_{\frac{1}{4}}(\Sigma)} = \left(\int_\Sigma \sum_{i=0}^k \abs{\nabla_\Sigma^i f}^2 e^{\frac{\abs{\xX}^2}{4}} d\mathcal{H}^n\right)^{1/2} < \infty.
\end{align*}
 Moreover, we say a map $f(x,s) \in W^{k}_{\frac{1}{4}}(\Sigma \times (-\infty,0])$ if $f(\cdot, t) \in W^k_{\frac{1}{4}}(\Sigma)$ and $f(x,\cdot) \in L^2((-\infty,0))$. In this article we are only interested in the weight $\wt$, so we will write $W^k = W^{k}_{\frac{1}{4}}$ and further write $W^0 = W$. The square bracket $\inner{\cdot, \cdot}$ will denote the standard inner product in $W$, i.e.
 \begin{align*}
 	\inner{f,g} = \int_\Sigma fg \wt,
 \end{align*}
 whereas $\cdot$ will denote the standard inner product in $\mathbb{R}^{n+1}$.
\subsection{Self-expanders}
We recall some basic facts about self-expanders from \cite{BWSpace} and \cite{BWIntegerDegree}. It is useful to use the following variational characterization of self-expanders: A hypersurface $\Sigma \subset \R^{n+1}$ is a self-expander if it is a (formal) critical point of the functional 
\begin{align}
\label{expander-functional}
	E[\Sigma] = \int_{\Sigma} e^{\frac{\abs{\xX}^2}{4}} d\mathcal{H}^n.
\end{align}
The Euler-Lagrange equation of the functional \cref{expander-functional} is precisely \cref{expander-equation}. A simple consequence of equation \cref{expander-equation} is that the second fundamental form $A_\Sigma$ satisfies $\abs{A_\Sigma} = O(\abs{\xX(p)}^{-1})$. \par 
Given a self-expander $\Sigma$ and a compactly supported variation $\Phi_t(\Sigma)$ such that $\left.\frac{d}{dt}\right|_{t=0} \Phi_t(\Sigma) = X$, the second variation formula of the expander functional \cref{expander-functional} is (see eg. \cite[Proposition 4.2]{BWSpace}):
\begin{align*}
	\left.\frac{d^2}{dt^2}\right|_{t=0} E[\Phi_s(\Sigma)] = -\int_{\Sigma} (X \cdot \nN_\Sigma)L_\Sigma(X \cdot \nN_\Sigma) e^{\frac{\abs{\xX}^2}{4}} d\mathcal{H}^n. 
\end{align*}
Here $L_\Sigma$ is the \emph{stability operator} of $\Sigma$ given by 
\begin{align*}
	L_\Sigma = \Delta_\Sigma + \frac{1}{2}\xX \cdot \nabla_\Sigma + \abs{A_\Sigma}^2 - \frac{1}{2}.
\end{align*}
For a connected self-expander $\Sigma$, the \emph{Morse index} of $\Sigma$, $\mathrm{ind}(\Sigma)$, is the biggest dimension of a linear subspace $V \subset C_0^2(\Sigma)$ such that 
\begin{align*}
	-\int_\Sigma  v L_\Sigma v e^{\frac{\abs{x}^2}{4}} d\mathcal{H}^n < 0 \text{ for all } v \in V \setminus \{0\}.
\end{align*}
A number $\mu \in \R$ is an eigenvalue of $-L_\Sigma$ if there exists $f \in W^{2}(\Sigma)$ such that $-L_\Sigma f = \mu f$. By works of Bernstein--Wang \cite{BWIntegerDegree}, when $\Sigma$ is smooth, $L_\Sigma$ has a discrete spectrum and we can therefore order the eigenvalues of $-L_\Sigma$. It follows that the index of $\Sigma$ is equal to the number of negative eigenvalues of $-L_\Sigma$. Let $\mathrm{nul}(\Sigma)$ denote the \textit{nullity} of the operator $-L_\Sigma$, i.e. the multiplicity of eigenvalue 0. $f \in W^2$ is called a \textit{Jacobi field} if $f \in \ker L_\Sigma$.\par  
One of the features of self-expanders is that the eigenfunctions have very fast exponential decay to offset the large exponential weight (this is in contrast with the self-shrinker case where one has to be very careful about the decay rate). Indeed, by \cite[Proposition 2.1]{BWIntegerDegree}, we have 
\begin{align}
\label{eigenfunction-decay}
	\norm{e^{\frac{\beta}{2} \abs{\xX}^2} f}_{C^{k,\alpha}(\Sigma \setminus B_R(0))} < \infty
\end{align}
for any $\beta < \frac{1}{2}$ and $R$ large depending on $\Sigma$. Moreover, the first eigenfunction $\phi_1$ corresponding to the lowest eigenvalue of $-L_\Sigma$ has the fastest decay rate of $(1+\abs{\xX})^{-\frac{1}{2}(n-1-2\lambda_1)} \wt$ by \cite[Proposition 3.2]{BWMountainPass}. \par 

In our study, we will often be working with normal graphs over a given self-expander $\Sigma$. Suppose $v \in C^2(\Sigma)$ with $\norm{v}_{C^2} < \eps$ is such that $\hh = \xX|_\Sigma + v\nN_\Sigma$ is an embedding, then it follows from a computation of Bernstein and Wang \cite[Proposition A.1]{BWRelativeEntropy} that
\begin{align}
	\label{expander-mean-curvature-formula}
	H_{\Sigma^\hh} + \frac{\xX}{2} \cdot \nN_{\Sigma^{\hh}} = -L_{\Sigma} v + Q(v,\xX \cdot \nabla_\Sigma v, \nabla_\Sigma v, \nabla^2_\Sigma v),
\end{align}
where $\Sigma^{\hh} = \hh(\Sigma)$, $H$ is the scalar mean curvature and $Q$ is a homogeneous quadratic polynomial with bounded coefficients. $Q$ can be written as 
\begin{align}
\label{asymptotic-expansion-q}
	Q(s,\rho,d, T) = a(s,\rho,d,T) \cdot d + b(s, d, T)s,
\end{align}
where $a, b$ are homogeneous polynomial of degree 1 with bounded coefficients. Moreover, $a$ and $b$ satisfy the estimates
\begin{equation}
	\label{nonlinear-error-estimate}
\begin{aligned}
	\abs{\xX}^{2+j-\ell}\abs{\nabla_{\Sigma}^i \nabla_s^j \nabla_d^k \nabla_T^{\ell} b(s,d,T)} \le C(\abs{\xX}^{-1}\abs{s} + \abs{d} + \abs{\xX}\abs{T}), \\
	\abs{\xX}^{1+j-\ell}\abs{\nabla_{\Sigma}^i \nabla_s^j \nabla_d^k \nabla_T^{\ell} a(s,\rho,d,T)} \le C(\abs{\xX}^{-1}\abs{s} + \abs{d} + \abs{\xX}\abs{T}).
\end{aligned}
\end{equation}
This follows from the estimates that, on a self-expanding end, $\abs{A_{\Sigma}} = O(\abs{\xX}^{-1})$ and $\abs{\nabla_\Sigma A_{\Sigma}} = O(\abs{\xX}^{-2})$. See \cite[Lemma 3.6]{CCMSGeneric} for a proof in the self-shrinker case, which carries almost identically to our case (see also \cite[Theorem 2.1]{DeruelleSchulze}).

\section{Existence of tame ancient flows}
\label{existence-section}
In this section we establish the existence of tame ancient RMCFs coming out of an unstable, asymptotically conical self-expander. Let $\mathcal{C} \subset \mathbb{R}^{n+1}$ be a smooth cone and fix a smooth self-expander $\Sigma$ asymptotic to $\mathcal{C}$. Let $\tilde{\mathcal{M}} = \{\tilde{\mu}_s\}_{s \in (-\infty,0)}$ be a rescaled Brakke flow such that 
\begin{align}
\label{backward-convergence}
	\lim_{s \to -\infty}  \tilde{\Sigma}_{s} =  \Sigma \text{ in } C^{\infty}_{\loc}(\R^{n+1})
\end{align}
We first establish some general properties satisfied by such flows.
\begin{prop}
\label{properties-of-the-flow}
	Let $\mathcal{M}$ be as above. There is $R = R(\Sigma) > 0$ such that $\tilde{\mathcal{M}}_s \setminus B_R(0)$ is a smooth RMCF. Moreover, $\tilde{\mathcal{M}}_s \setminus B_{2R}(0)$ can be written as a normal graph of $v$ over an open set $V \subset \Sigma \setminus B_R(0)$ such that
\begin{align*}
	\abs{v(p,s)} < C\abs{\xX(p)}^{-1} \text{ for } p \in \Sigma \setminus B_R(0)
\end{align*}
\end{prop}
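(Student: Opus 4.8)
The plan is to exploit the fact that the self-expander $\Sigma$ is asymptotically conical, so on a large enough exterior region $\Sigma \setminus B_R(0)$ it looks like a small graph over the cone $\mathcal{C}$, with $|A_\Sigma| = O(|\xX|^{-1})$. Since $\tilde\Sigma_s \to \Sigma$ in $C^\infty_{\loc}$, for each fixed compact region the flow is eventually a small smooth graph over $\Sigma$; the issue is to get this \emph{uniformly} on the non-compact end and for all $s \in (-\infty,0)$, and to get the decay estimate $|v| < C|\xX|^{-1}$. First I would invoke a pseudolocality / local regularity argument for (rescaled) Brakke flows: there is a scale-invariant smallness condition (small Gaussian density ratios, or small local area together with a graphical hypothesis) under which the flow is smooth and graphical on a parabolic ball. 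Because $\Sigma$ is asymptotically conical, on annuli $B_{2r}(0)\setminus B_r(0)$ with $r \ge R$ the surface $\Sigma$ has curvature $\le C/r$ and the cone $\mathcal{C}$ provides a fixed smooth model; the backward limit condition \cref{backward-convergence} then forces $\tilde\mu_s$ to be close, in the appropriate scale-invariant sense, to $\Sigma$ (hence to $\mathcal C$ after rescaling) on these annuli, uniformly in $s$ for $s$ sufficiently negative — and then, crucially, this closeness \emph{propagates forward} because the region is disjoint from the "core" $B_R(0)$ where singularities could form. That gives smoothness and a uniform graphical representation $\tilde\Sigma_s \setminus B_{2R}(0) = \Sigma_v$ over $V \subset \Sigma \setminus B_R(0)$ with $\|v(\cdot,s)\|_{C^{2,\alpha}}$ small on the end.

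Next I would upgrade the crude bound on $v$ to the sharp decay $|v(p,s)| < C|\xX(p)|^{-1}$. The natural mechanism is a barrier/maximum-principle argument using the evolution equation satisfied by $v$. On the graphical region $v$ solves a parabolic equation of the form $\partial_s v = L_\Sigma v - Q(\cdots)$ coming from \cref{expander-mean-curvature-formula}, where $L_\Sigma = \Delta_\Sigma + \tfrac12 \xX\cdot\nabla_\Sigma + |A_\Sigma|^2 - \tfrac12$; on the end the zeroth order coefficient $|A_\Sigma|^2 - \tfrac12$ is close to $-\tfrac12 < 0$, so the operator is strictly "dissipative" at infinity. One then checks that $w(p) = C|\xX(p)|^{-1}$ is a supersolution of the associated equation on $\Sigma \setminus B_R(0)$ for $R$ large and $C$ large: this uses $|A_\Sigma| = O(|\xX|^{-1})$, $|\nabla_\Sigma A_\Sigma| = O(|\xX|^{-2})$, and the structure/estimates \cref{asymptotic-expansion-q}–\cref{nonlinear-error-estimate} on the quadratic error $Q$, together with the elementary computation of $L_\Sigma$ applied to powers of $|\xX|$ (the dominant term being $(\tfrac12 \xX\cdot\nabla - \tfrac12)|\xX|^{-1} \approx -|\xX|^{-1} < 0$, which dominates the Laplacian and lower-order contributions for large $|\xX|$). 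Comparing $v$ with $\pm w$ on the parabolic boundary (on $\partial B_R(0)$ where $v$ is bounded by the $C^{2,\alpha}$-smallness, and as $s \to -\infty$ where $v \to 0$, and as $|\xX| \to \infty$ where both decay) and applying the maximum principle on the noncompact domain — with a standard cutoff/exhaustion to justify it in the weighted setting, using that $v$ has at worst polynomial growth — yields $|v(p,s)| \le C|\xX(p)|^{-1}$.

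The main obstacle I expect is establishing the \emph{uniform}-in-$s$ smoothness and graphicality on the non-compact end: the convergence in \cref{backward-convergence} is only locally smooth and only as $s\to-\infty$, so upgrading it to a uniform estimate on all of $(\Sigma\setminus B_R)\times(-\infty,0)$ requires combining pseudolocality with the fact that the exterior region never develops singularities (so that the graphical description, once valid for very negative $s$ on each annulus, persists for all later times as long as one stays outside $B_R$). Handling the interface near $\partial B_R(0)$ carefully — ensuring $R$ can be chosen depending only on $\Sigma$ and not on the particular flow — is the delicate point; once that is in place, the barrier argument for the decay rate is routine given \cref{nonlinear-error-estimate}. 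A secondary technical point is the justification of the maximum principle on the weighted, noncompact manifold $\Sigma\setminus B_R(0)$, which is handled by a Gaussian-weighted energy estimate or an explicit comparison with $\epsilon(1 + |\xX|^2)$ barriers that are then let to zero.
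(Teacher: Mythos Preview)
Your plan is workable but inverts the paper's two steps. The paper first obtains the distance estimate $|v(p,s)| < C|\xX(p)|^{-1}$ by a purely geometric barrier: one places small spheres near $\Sigma$ at scale comparable to $|\xX(p)|$ and uses the avoidance principle for Brakke flows (spheres shrinking under MCF) to trap the flow, as in \cite[Proposition A.1]{ChenII} or \cite[Lemma 4.3]{BWTopology}; only afterward is pseudolocality invoked to upgrade this $C^0$ control to smoothness. The advantage of that order is that the sphere barriers operate directly on the weak flow, require no prior regularity or graphicality, and yield the decay on all of $(\Sigma\setminus B_R)\times(-\infty,0)$ in one stroke --- there is no forward-in-time propagation to iterate. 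Your route (pseudolocality first, then a PDE comparison with the supersolution $w = C|\xX|^{-1}$ for the graphical equation) is also sound, and your observation that the drift term $\tfrac12\xX\cdot\nabla_\Sigma - \tfrac12$ makes $|\xX|^{-1}$ a supersolution on the end is correct; but, as you yourself flag, the delicate point is establishing the uniform-in-$s$ graphical representation in the first place, and this is precisely what the paper's geometric barrier argument sidesteps. In short: both approaches are barrier arguments, but the paper's is ambient-geometric and comes before regularity, while yours is intrinsic-PDE and comes after.
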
 
\begin{proof}
	The distance estimate follows from using small balls as barriers. See \cite[Proposition A.1]{ChenII} or \cite[Lemma 4.3]{BWTopology}. This can then be upgraded to smoothness using standard pseudolocality arguments. See \cite[Proposition A.2]{ChenII} or \cite[Proposition 4.4]{BWTopology}.
\end{proof}
The key fact we need to use in the subsequent analysis is that all such RMCF, up to a time translation, can be written as an entire normal graph over $\Sigma$. 
\begin{prop}
\label{backwards-convergence}
	Given $\delta > 0$, there is $s_\delta$ such that $\tilde{\mathcal{M}}$ can be written as a normal graph of a smooth function $v$ over $\Sigma$ on $(-\infty,s_\eps)$ such that 
	\begin{align*}
		\norm{v}_{C^{2,\alpha}_P} < \eps \text{ for all } s < s_\eps.
	\end{align*}
\end{prop}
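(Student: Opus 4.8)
The plan is to write $\tilde{\mathcal{M}}$ as a normal graph over $\Sigma$ separately on a noncompact end and on a fixed compact core, to control the $C^{2,\alpha}_P$ norm on each piece, and then to glue the two representations on the overlap. Given $\eps>0$, the end contributes a time-\emph{independent} smallness coming from decay at infinity, and the core contributes a smallness that holds only for $s$ sufficiently negative; the time $s_\eps$ will come from the core.

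\emph{The end.} By \cref{properties-of-the-flow} there is $R=R(\Sigma)$ so that $\tilde{\mathcal{M}}$ is a smooth RMCF outside $B_R(0)$, and for every $s<0$ the surface $\tilde{\mathcal{M}}_s\setminus B_{2R}(0)$ is a normal graph $\fF_v(V)$ over some open $V\subset\Sigma\setminus B_R(0)$ with $\abs{v(p,s)}\le C\abs{\xX(p)}^{-1}$. I would first upgrade the $C^0$ bound to a bound on all derivatives. On each rescaled annulus $r^{-1}(\Sigma\setminus B_R(0))\cap(B_2\setminus B_1)$ the surface has uniformly bounded geometry (it is close to the smooth cone $\mathcal{C}$, and $\abs{A_\Sigma}=O(\abs{\xX}^{-1})$), so the graphical form of \cref{rescaled-mcf-equation} — whose linear part is $L_\Sigma$ and whose nonlinear error is controlled by \cref{expander-mean-curvature-formula} and \cref{nonlinear-error-estimate} — is uniformly parabolic with uniformly bounded coefficients on these rescaled annuli. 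Interior parabolic Schauder estimates there, combined with the $C^0$ decay of $v$, bootstrap to
\begin{align*}
	\abs{\nabla_\Sigma^i\nabla_s^j v}(p,s)\le C_{i,j}\,\abs{\xX(p)}^{-1}\qquad\text{for }p\in\Sigma\setminus B_{2R}(0),\ s<0,
\end{align*}
uniformly in $s$. Using the distance bound of \cref{properties-of-the-flow} and a connectedness argument one sees that, once $\rho$ is large, the graph domain $V$ in fact contains all of $\Sigma\setminus B_\rho(0)$. Hence there is $\rho=\rho(\eps,\Sigma)\ge 4R$ with $C\rho^{-1}$ small enough that $v$ is defined on $(\Sigma\setminus B_\rho(0))\times(-\infty,0)$ with $\norm{v}_{C^{2,\alpha}_P((\Sigma\setminus B_\rho(0))\times(-\infty,0))}<\eps/2$.

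\emph{The core and the gluing.} On the compact set $\overline{B_{2\rho}(0)}$ I would use the Brakke flow structure. By \cref{backward-convergence} and the fact that $\tilde{\mathcal{M}}$ is an integral, cyclic, unit-regular Brakke flow, the measures $\tilde{\mu}_s$ converge to $\mathcal{H}^n\llcorner\Sigma$ on $B_{3\rho}(0)$ with no loss of mass; applying the local regularity theorem for such flows (cf. the pseudolocality argument in the proof of \cref{properties-of-the-flow}) on the parabolic cylinders $B_{3\rho}(0)\times[s-1,s]$, on which the flow is eventually $C^0$-close to the static smooth $\Sigma$, upgrades the slicewise convergence to space-time smooth convergence \emph{with estimates}. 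Thus there is $s_\eps$ such that for every $s<s_\eps$ the slices $\tilde{\Sigma}_{s'}$, $s'\in[s-1,s]$, are normal graphs $\fF_w$ over $\Sigma\cap B_{2\rho}(0)$ with $\norm{w}_{C^{2,\alpha}_P}<\eps/2$ there. On the overlap $B_{2\rho}(0)\setminus B_\rho(0)$ both $\fF_v$ and $\fF_w$ parametrize the same piece of $\tilde{\Sigma}_{s'}$, and both take values in the tubular neighborhood where nearest-point projection onto $\Sigma$ is a diffeomorphism, so $v=w$ there; patching yields a single smooth $v$ on all of $\Sigma$, defined for $s<s_\eps$, with $\norm{v}_{C^{2,\alpha}_P(\Sigma\times(-\infty,s_\eps))}<\eps$, which is the assertion.

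\emph{Main obstacle.} The delicate point is the core step. The hypothesis \cref{backward-convergence} is a priori only a \emph{slicewise} statement ($C^\infty_{\loc}$ convergence of each time-slice, with no quantitative control in $s$), whereas the conclusion requires convergence in the \emph{parabolic} norm $C^{2,\alpha}_P$ over the open interval $(-\infty,s_\eps)$ — i.e. estimates uniform in $s$ and control of $\partial_s v$. Getting these forces one to leave the static/elliptic picture and exploit the Brakke-flow structure: it is exactly the local regularity theorem, together with unit-regularity (which prevents spurious mass appearing away from $\Sigma$ inside $B_{2\rho}(0)$), that turns $C^0$-closeness on a parabolic cylinder into quantitative higher-order closeness. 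A secondary, more routine issue is checking that the end graph domain genuinely exhausts $\Sigma\setminus B_\rho(0)$ rather than an open subset of it; this follows from the distance estimate in \cref{properties-of-the-flow} and the absence of mass loss.
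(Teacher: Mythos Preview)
Your proposal is correct and follows essentially the same strategy as the paper: split into a noncompact end (handled via \cref{properties-of-the-flow} and decay) and a compact core (handled via Brakke regularity), then patch. The only minor difference is that the paper obtains the time-derivative control by invoking the graphical RMCF equation directly rather than appealing to Brakke regularity on parabolic cylinders, and is less explicit than you are about bootstrapping the $C^0$ decay on the end to higher derivatives.
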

\begin{proof}
	Since the varifold convergence \cref{backward-convergence} has multiplicity 1, we may apply the Brakke regularity theorem (\cite{Brakke} or \cite{WhiteRegularity}) to conclude that the convergence is in fact smooth on compact sets. Moreover, \cref{properties-of-the-flow} shows that there exists $R = R(\Sigma)$ such that $\supp \tilde{\mu}_s \setminus B_{2R}(0)$ can be written as a normal graph $v$ over $\Sigma \setminus B_R(0)$ such that 
	\begin{align*}
		\norm{v(p,s)}_{C^{2,\alpha}(\Sigma \setminus B_R(0))} \le C\abs{\xX(p)}^{-1}.
	\end{align*}
	Now given $\delta$ we choose $R$ sufficiently large so that $CR^{-1} < \eps$ and then $s_\eps$ sufficiently negative such that $\supp \tilde{\mu}_s \cap B_R(0)$ is a normal graph over $\Sigma \cap B_R(0)$ with $C^{2,\alpha}$ norm bounded by $\eps$ as well. A similar argument applied to $\frac{\partial v}{\partial s}$ using the fact that $v$ satisfies the graphical RMCF equation gives the full parabolic estimate (up to decreasing $s_\eps$). 
\end{proof}
\begin{rem}
	Arguing as in \cite[Lemma 7.18]{CCMSGeneric}, it is possible to obtain a weighted estimate as well, but in the self-expander case, we do not need to work with weighted H\"{o}lder space in the following due to the rapid decay of eigenfunctions.
\end{rem}
Now suppose further that $\Sigma$ is unstable. Since $\Sigma$ has a discrete spectrum, $\Sigma$ has finite index which we will denote by $I$. By considering the Rayleigh quotient 
\begin{align*}
	\frac{\int_\Sigma \abs{\nabla_\Sigma v}^2 \wt }{\int_\Sigma \abs{v}^2 \wt }
\end{align*}
for $v \in C^{2,\alpha} \cap W^2(\Sigma) \setminus \{0\}$, it is easy to see that the first eigenvalue is indeed achieved. \par 
We establish, using PDE method similar to \cite{ChoiMantoulidis}, the existence of an $I$-parameter family of ancient solutions to the RMCF starting from $\Sigma$. Each one of these solutions will correspond to a MCF coming out of $\mathcal{C}$ that is not self-similar.  \par 
We work with the RMCF equation \cref{rescaled-mcf-equation}. The tame ancient flows will be constructed as limits of suitable perturbations of the initial hypersurface $\Sigma$ by its eigenfunctions. To this end, we fix an orthonormal basis $\{\phi_i\}_{i=1}^\infty \subset W$ consisting of eigenfunctions of $-L_\Sigma$, where $\phi_{1}, \ldots, \phi_I$ correspond to the negative eigenvalues, $\phi_{I+1}, \ldots, \phi_{I+K}$ correspond to the 0 eigenvalue and $\phi_{I+K+1},\ldots$ correspond to positive eigenvalues. Define the map $\tau_-: \R^I \to W(\Sigma \times [-\infty,0))$ by
\begin{align*}
\tau_-(a_1,\ldots,a_I) = \sum_{i=1}^I a_i e^{-\lambda_i s} \phi_i(x).
\end{align*}
Let $\Pi_-: W \to W$ be the projection onto the negative eigenspace, i.e.
\begin{align*}
	\Pi_-(v) = \sum_{i=1}^I \inner{v,\phi_i} \phi_i(x).
\end{align*}
\par 
Using \cref{expander-mean-curvature-formula}, \cref{rescaled-mcf-equation} reduces to the following nonlinear problem:
\begin{align}
	\label{nonlinear-problem}
	\frac{\partial}{\partial s} v = L_{\Sigma} v + Q(v,\xX \cdot \nabla_\Sigma v, \nabla_\Sigma v, \nabla^2_\Sigma v).
\end{align}
To solve this nonlinear PDE, we first consider the inhomogeneous linear PDE:
\begin{align}
	\label{linear-problem}
	\frac{\partial}{\partial s} v = L_\Sigma v + h(p,s), \; \; (p,s) \in \Sigma \times (-\infty,0),
\end{align}
where $h(p,s)$ is a smooth function. We will assume in addition that $h(p,s)$ is exponentially decaying in time, i.e. there exists some $\delta > 0$ such that
\begin{align}
	\label{inhomogeneous-decay}
		\int_{-\infty}^0 \abs{e^{-\delta s} \norm{h(\cdot,s)}_{W}}^2 ds  < \infty. 
\end{align}
Using separation of variables, we can write a solution to \cref{linear-problem} as
\begin{align*}
	v(p,s) = \sum_{i=1}^\infty v_i(p) \phi_i(s),
\end{align*}
where $v_i$ formally solves the ODE
\begin{align*}
	v_i'(s) = -\lambda_iv_i(s) + \sum_{j=1}^\infty h_j(s) \phi_j(p),
\end{align*}
where $h_j(s) = \inner{h(\cdot,s),\phi_j}$. This allows us to establish the existence of the $L^2$-solutions to \cref{linear-problem}.
\begin{prop}[cf. Lemma 3.1 of \cite{ChoiMantoulidis}]
\label{existence-linear-problem}
	Let $(a_1,\ldots,a_I) \in \mathbb{R}^I$ and suppose $h$ satisfies \cref{inhomogeneous-decay}. There exists a unique solution $v$ to \cref{linear-problem} such that $\Pi_-(v(\cdot,0)) = \sum_{i=1}^I a_i \phi_i$ and 
	\begin{align}
	\label{linear-problem-decay}
		\int_{-\infty}^0 \abs{e^{-\delta' s} \norm{v(\cdot,s)}_{W^{1}}}^2 ds  < \infty,
	\end{align}
	for some $0 < \delta' < \min\{\delta, -\mu_I\}$. $v$ can be written as 
	\begin{align*}
		v(p,s) = \sum_{i=1}^\infty v_i(s) \phi_i(p),
	\end{align*}
	where
	\begin{align*}
		v_i(s) = \begin{cases}
			a_ie^{-\lambda_is} - \int_s^0 e^{\lambda_i(\sigma - s)} h_i(\sigma)d\sigma & i = 1, \ldots, I,  \\
			\int_{-\infty}^s e^{\lambda_i(\sigma-s)} h_i(\sigma)d\sigma & i = I+1,I+2,\ldots
		\end{cases}.
	\end{align*}
	Here $h_i(s) = \inner{h(p,s),\phi_i(x)}$. Additionally, $v$ satisfies the estimate 
	\begin{align*}
		e^{-\delta' s}\norm{v(\cdot,s) - \tau_-(a_1,\ldots,a_I)}_{W} \le C\left(\int_{-\infty}^0 \abs{e^{-\delta\sigma}\norm{h(\cdot,\sigma)}_{W}}^{2}d\sigma\right)^{\frac{1}{2}}
	\end{align*}
	for all $s < 0$.
\end{prop}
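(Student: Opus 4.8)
The plan is to diagonalize \cref{linear-problem} in the eigenbasis $\{\phi_i\}$ of $-L_\Sigma$, solve mode by mode by the Duhamel formula forced by the decay requirement \cref{linear-problem-decay}, and then reassemble the series, using Parseval together with elementary one–sided convolution estimates, to see that the resulting function is an honest (smooth) solution satisfying the stated bounds. Write the ansatz $v(p,s) = \sum_i v_i(s)\phi_i(p)$ and $h(p,s) = \sum_i h_i(s)\phi_i(p)$ with $h_i(s) = \inner{h(\cdot,s),\phi_i}$; testing \cref{linear-problem} against $\phi_i$ and using that $-L_\Sigma$ is self-adjoint on $W^2$ with $-L_\Sigma\phi_i = \lambda_i\phi_i$ (the rapid decay \cref{eigenfunction-decay} of the $\phi_i$ justifying the integrations by parts) gives the scalar ODEs $v_i'(s) = -\lambda_i v_i(s) + h_i(s)$.

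For the unstable modes $i \le I$ the eigenvalue $\lambda_i$ is negative, so the homogeneous solution $e^{-\lambda_i s}$ decays as $s \to -\infty$; prescribing $\Pi_-(v(\cdot,0)) = \sum_{i\le I} a_i\phi_i$ pins down the constant, and running the Duhamel integral from $s$ to $0$ on the inhomogeneous part yields the first line of the stated formula. For $i \ge I+1$ one has $\lambda_i \ge 0$, the homogeneous solution $e^{-\lambda_i s}$ blows up as $s \to -\infty$ and is incompatible with \cref{linear-problem-decay}, so the only admissible choice is the Duhamel integral $\int_{-\infty}^s e^{\lambda_i(\sigma - s)}h_i(\sigma)\,d\sigma$; this converges because \cref{inhomogeneous-decay} forces $h_i \in L^1((-\infty,0))$ (by Cauchy--Schwarz against $e^{\delta\sigma}$). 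This mode analysis already gives uniqueness: if $v,v'$ are two admissible solutions then each mode of $w = v-v'$ solves $w_i' = -\lambda_i w_i$, so $w_i(s) = w_i(0)e^{-\lambda_i s}$; for $i \le I$ the condition $\Pi_-(w(\cdot,0)) = 0$ kills $w_i$, and for $i \ge I+1$ the term $e^{-\lambda_i s}$ fails the weighted integrability \cref{linear-problem-decay} unless $w_i \equiv 0$.

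For the estimates, fix $\delta' \in (0,\min\{\delta,-\mu_I\})$, where $\mu_I = \lambda_I$ is the largest among the negative eigenvalues $\lambda_1,\dots,\lambda_I$, and use the identity $e^{\lambda_i(\sigma-s)}e^{-\delta' s} = e^{(\lambda_i+\delta')(\sigma-s)}e^{-\delta'\sigma}$. For $i \le I$, the choice $\delta' < -\mu_I \le -\lambda_i$ makes $\lambda_i + \delta' < 0$, so on $s < \sigma < 0$ the kernel $e^{(\lambda_i+\delta')(\sigma-s)}$ is integrable with $L^1$-mass at most $(-\mu_I - \delta')^{-1}$; Cauchy--Schwarz bounds $e^{-\delta' s}\abs{v_i(s) - a_i e^{-\lambda_i s}}$ by a constant times $\big(\int_{-\infty}^0 e^{-2\delta'\sigma}h_i(\sigma)^2\,d\sigma\big)^{1/2}$. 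For $i \ge I+1$ one has $\lambda_i + \delta' \ge \delta' > 0$, the kernel on $-\infty < \sigma < s$ has $L^1$-mass at most $\delta'^{-1}$, and the same argument bounds $e^{-\delta' s}\abs{v_i(s)}$. Squaring, summing in $i$ via Parseval, and using $e^{-2\delta'\sigma} \le e^{-2\delta\sigma}$ for $\sigma < 0$ with \cref{inhomogeneous-decay}, produces the pointwise bound $e^{-\delta' s}\norm{v(\cdot,s) - \tau_-(a)}_W \le C\big(\int_{-\infty}^0 \abs{e^{-\delta\sigma}\norm{h(\cdot,\sigma)}_W}^2 d\sigma\big)^{1/2}$. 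The weighted $W^1$-bound \cref{linear-problem-decay} follows the same way once one records $\norm{f}_{W^1}^2 \le \sum_i (1 + (\lambda_i)_+ + C)f_i^2$, which comes from $-\inner{f, L_\Sigma f} = \sum_i \lambda_i f_i^2$ and the boundedness of $\abs{A_\Sigma}$ on $\Sigma$: for $i \le I$ one integrates $e^{-2\delta' s}a_i^2 e^{-2\lambda_i s}$ over $(-\infty,0)$, finite since $\lambda_i + \delta' < 0$; for $i \ge I+1$, Young's inequality for the one–sided convolution gives $\int_{-\infty}^0 e^{-2\delta' s}v_i(s)^2\,ds \le (\lambda_i+\delta')^{-2}\int_{-\infty}^0 e^{-2\delta'\sigma}h_i(\sigma)^2\,d\sigma$, and the weight $(1 + \lambda_i + C)(\lambda_i+\delta')^{-2}$ is bounded uniformly for $\lambda_i \ge 0$, so the $i$-sum is again controlled by \cref{inhomogeneous-decay}.

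Finally I would check that the series genuinely defines a solution: the partial sums solve the truncated linear equations, the weighted estimates above give convergence in $L^2_{\loc}$ in $s$ with values in $W^1$ (with a tail bound uniform in $s$), so the limit is a weak solution of \cref{linear-problem}, and since $h$ is smooth, interior parabolic regularity on $\Sigma$ upgrades it to a classical solution with $\Pi_-(v(\cdot,0)) = \sum_{i\le I}a_i\phi_i$ by construction. The step that demands real care — the main obstacle — is precisely the bookkeeping that makes the per-mode estimates \emph{summable in $i$}: one must choose $\delta'$ small enough (strictly below $-\mu_I$) that the unstable homogeneous terms $a_i e^{-\lambda_i s}$ stay $e^{-\delta' s}$–integrable on $(-\infty,0)$, yet positive, so that the Duhamel kernels of the non-negative modes have $L^1$-mass bounded independently of $i$, and one must verify that the Sobolev weights $(\lambda_i)_+$ are absorbed by the $(\lambda_i+\delta')^{-2}$ gain; after that, everything reduces to Parseval plus \cref{inhomogeneous-decay}.
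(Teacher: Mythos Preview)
Your proof is correct and follows essentially the same approach as the paper: eigenfunction decomposition, explicit Duhamel formulas mode-by-mode, then H\"older/Cauchy--Schwarz (and Parseval) for the estimates. Your version is in fact more careful than the paper's own sketch, which glosses over the summability in $i$ and the $W^1$-control that you correctly flag as the main bookkeeping point.
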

\begin{proof}
	Let us check that $v$ solves \cref{linear-problem}. Indeed, differentiating under the integral sign gives
	\begin{align*}
		\frac{\partial}{\partial s} v(s) &= \sum_{i=1}^I \left[-\lambda_i a_i e^{-\lambda_i s} + h_i(s) + \lambda_i \int_s^0 e^{\lambda_i(\sigma - s)} h_i(\sigma) d\sigma\right] \phi_i \\
		&\phantom{{}={}} + \sum_{i=I+1}^\infty \left[ h_i(s) - \lambda_i \int_{-\infty}^s e^{\lambda_i(\sigma - s)}h_i(\sigma) d\sigma\right]\phi_i
	\end{align*}
	Using $h(p,s) = \sum_{i=1}^\infty h_i(s)\phi_i(p)$ and $\lambda_i \phi_i = -L_\Sigma \phi_i$ we get that 
	\begin{align*}
		\frac{\partial}{\partial s} v(s) = &\phantom{{}+{}}\sum_{i=1}^I L_\Sigma \phi_i \left(a_i e^{-\lambda_i s} - \int_s^0 e^{\lambda_i(\sigma -s )}h_i(\sigma) d\sigma \right) \\
		&+ \sum_{i=I+1}^\infty  L_\Sigma \phi_i \int_{-\infty}^s e^{\lambda_i(\sigma -s )}h_i(\sigma) d\sigma + h(\cdot,s).
	\end{align*}
	Evidently this is \cref{linear-problem}. The fact that $\Pi_-(v(\cdot,0)) = \sum_{i=1}^I a_i\phi_i$ is a straightforward computation. The uniqueness of the solution follows from the uniqueness of the linear parabolic equation $\frac{\partial}{\partial s} v = L_\Sigma v$ (with 0 initial data). \cref{linear-problem-decay} follows from the assumption on $h$ and the fact that, if $\lambda_i < 0$,
	\begin{align*}
		\int_{-\infty}^0 e^{-2\delta' s - 2\lambda_i s} ds < \infty 
	\end{align*}
	as long as $\delta' < -\lambda_i$. Finally the last estimates follow from H\"{o}lder's inequality as the subtraction kills the $a_ie^{-\lambda_i s}$ terms.
\end{proof}

We now follow the ideas of \cite{BWSpace} to establish higher regularity of the solutions obtained above. First notice that for a given initial data $a = (a_1,\ldots,a_I)$, $\tau_{-}(a_1,\ldots,a_I)$ solves the linear homogeneous equation $\frac{\partial}{\partial s} v = L_\Sigma v$, hence by replacing $v$ by $v- \tau_{-}(a_1,\ldots,a_I)$ we will WLOG assume that $v$ is a solution to \cref{linear-problem} with 
\begin{align}
	\label{zero-initial-data}
	\Pi_{-}(v(\cdot,0)) = 0.
\end{align}	
We will prove a H\"{o}lder estimate of the following type:
\begin{prop}
\label{schauder-estimate}
	Suppose that $h \in C^{0,\alpha}_P(\Sigma \times (-\infty,0])$. Let $v$ be the solution to \cref{linear-problem} with \cref{zero-initial-data} constructed in \cref{existence-linear-problem}, then for $s < 0$,
	\begin{align*}
		[\nabla_\Sigma^2 v]_{\alpha; \Sigma \times [s-1,s]} + \sup_{\Sigma \times [s-1,s]} \left(\abs{\nabla_\Sigma v} + \abs{\nabla_\Sigma^2 v}\right)  \le C \norm{h}_{C^{0,\alpha}_P(\Sigma \times (-\infty,0])}.
	\end{align*}
\end{prop}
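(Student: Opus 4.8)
The plan is to derive this interior-in-time Schauder estimate by combining the $W^1$-decay estimate from \cref{existence-linear-problem} with standard parabolic interior Schauder theory applied on unit time windows, patched together using the cone structure at spatial infinity. First I would split $\Sigma$ into a compact core $\Sigma \cap B_R(0)$ and the conical end $\Sigma \setminus B_R(0)$, where $R$ is chosen large so that the geometry of the end is controlled by $\abs{A_\Sigma} = O(\abs{\xX}^{-1})$ and $\abs{\nabla_\Sigma A_\Sigma} = O(\abs{\xX}^{-2})$. On the compact core, $L_\Sigma$ is a uniformly parabolic operator with smooth bounded coefficients, so the classical interior parabolic Schauder estimate on the parabolic cylinder $(\Sigma \cap B_{2R}(0)) \times [s-2,s]$ gives
\begin{align*}
	[\nabla_\Sigma^2 v]_{\alpha;(\Sigma \cap B_R(0)) \times [s-1,s]} + \sup_{(\Sigma \cap B_R(0)) \times [s-1,s]}\left(\abs{\nabla_\Sigma v} + \abs{\nabla_\Sigma^2 v}\right) \le C\left(\norm{h}_{C^{0,\alpha}_P} + \norm{v}_{L^2((\Sigma \cap B_{2R}(0)) \times [s-2,s])}\right).
\end{align*}
To control the error term $\norm{v}_{L^2}$ on the right, I would use the decay estimate \cref{linear-problem-decay}: since $v$ lies in a weighted $L^2$-space which is time-integrable against $e^{-2\delta' s}$, on any fixed unit time window $[s-2,s]$ the $L^2$-norm of $v$ is bounded (in fact it decays like $e^{\delta' s}$), and this can be absorbed — either into the constant after iterating, or directly because for the $h \in C^{0,\alpha}_P$ bounded case the solution obtained in \cref{existence-linear-problem} has $\norm{v(\cdot,s)}_W \le C\norm{h}_{C^{0,\alpha}_P}$ uniformly via the explicit formula for the $v_i(s)$ and the eigenvalue gap.

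Next, on the conical end $\Sigma \setminus B_R(0)$, I would rescale: for $p$ with $\abs{\xX(p)} = r$ large, work on the ball $B^\Sigma_{r/4}(p)$, rescale distances by $r^{-1}$ so the rescaled surface has curvature $O(1)$ and the rescaled operator $L_\Sigma$ (after absorbing the $\frac{1}{2}\xX \cdot \nabla_\Sigma$ drift, which is $O(r)$ against a gradient that scales like $r^{-1}$, hence $O(1)$, together with the zeroth-order term $\abs{A_\Sigma}^2 - \frac{1}{2}$ which is $O(1)$) becomes a uniformly parabolic operator with $C^{0,\alpha}$ coefficients bounded independently of $r$. Applying the interior Schauder estimate in the rescaled picture and scaling back gives a bound of the form
\begin{align*}
	\abs{\nabla_\Sigma v(p,s)} + \abs{\nabla_\Sigma^2 v(p,s)} \le C\left(\norm{h}_{C^{0,\alpha}_P} + r^{-n/2}\norm{v}_{L^2(B^\Sigma_{r/2}(p) \times [s-2,s])}\right),
\end{align*}
with analogous control of the seminorm $[\nabla_\Sigma^2 v]_\alpha$ over $[s-1,s]$, where crucially the constant $C$ is \emph{uniform in $r$}. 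Again the $L^2$ error is controlled by \cref{linear-problem-decay}, and here I would note that the weight $\wt$ only helps: the unweighted $L^2$-norm on the end is dominated by the weighted one. Combining the core and end estimates yields the claim.

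The main obstacle I anticipate is twofold. First, the $\frac{1}{2}\xX \cdot \nabla_\Sigma$ drift term in $L_\Sigma$ grows linearly in $\abs{\xX}$, so one must be careful that, after the parabolic rescaling by $\abs{\xX}^{-1}$ at a point of the conical end, the rescaled drift coefficient stays bounded — this works because the geometric scaling $r \mapsto r^{-1}$ exactly cancels the linear growth, but it needs to be checked that the time rescaling (by $r^{-2}$) is compatible, i.e. that we can still fit a unit-length-in-original-time window inside the rescaled parabolic cylinder; this is where the ``$[s-1,s]$ inside $[s-2,s]$'' buffer is used, possibly with the window shrinking in the rescaled picture, which is harmless since Schauder estimates localize. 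Second, one must ensure the constant in the patched estimate does not degenerate as $\abs{\xX} \to \infty$; this is guaranteed by the uniform asymptotics of the second fundamental form on a self-expanding end, exactly as invoked after \cref{nonlinear-error-estimate}. A subtler point is that $v$ is a priori only the weak/$L^2$ solution from \cref{existence-linear-problem}; bootstrapping from $W^1$-regularity up to $C^{2,\alpha}$ requires first using $L^p$ parabolic estimates to get into a H\"older class before the Schauder step, but since $h \in C^{0,\alpha}_P$ this is standard and I would state it as such rather than grinding through it.
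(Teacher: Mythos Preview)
Your approach has a genuine gap in the treatment of the drift term on the conical end. You claim that under the spatial rescaling $\tilde{x} = r^{-1}x$ the drift $\tfrac{1}{2}\xX\cdot\nabla_\Sigma$ becomes $O(1)$ because the $O(r)$ coefficient cancels against the $r^{-1}$ scaling of the gradient. That cancellation is real at the level of the first-order operator, but it is incompatible with the parabolic rescaling of the rest of the equation: under $\tilde{x} = r^{-1}x$, $\tilde{s} = r^{-2}s$ one has $\partial_s = r^{-2}\partial_{\tilde s}$ and $\Delta_\Sigma = r^{-2}\Delta_{\tilde\Sigma}$, so after clearing the common factor $r^{-2}$ to make the principal part uniformly parabolic, the drift term re-emerges with coefficient of size $r^{2}$ (and the zeroth-order piece $-\tfrac{1}{2}$ and the forcing $h$ likewise pick up a factor $r^{2}$). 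If instead you rescale only space and not time, the drift stays bounded but the equation becomes degenerate with ellipticity $r^{-2}$. Either way the Schauder constant blows up as $r\to\infty$, so the patching you describe does not give a uniform bound.

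The paper resolves exactly this issue by a different route: it passes back to the unrescaled MCF via $t=e^{s}$ and $w(x,t)=t^{1/2}v(t^{-1/2}\Psi_p(x,t),\log t)$ on the moving family $\Sigma_t=\sqrt{t}\,\Sigma$. Under this change of variables the drift term $\tfrac{1}{2}\xX\cdot\nabla_\Sigma$ and the constant $-\tfrac{1}{2}$ in the potential are absorbed entirely, and $w$ satisfies the clean linearized MCF equation $\partial_t w - \Delta_{\Sigma_t} w - \abs{A_{\Sigma_t}}^{2}w = t^{-1/2}h(\cdots)$ on $\Omega\times[e^{-1},1]$. No spatial rescaling is needed at all: at any $p\in\Sigma\setminus B_R(0)$ the surfaces $\Sigma_t\cap B_2(p)$ for $t\in[e^{-1},1]$ are nearly flat graphs over $T_p\Sigma$ with uniformly small $C^3$ norm (since $\abs{A_\Sigma}=O(\abs{\xX}^{-1})$), so ordinary interior Schauder estimates apply on a fixed unit-scale parabolic cylinder with constants independent of $p$. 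Transferring back to $v$ gives the claimed bound on $\Sigma\setminus B_R(0)$; the compact core is handled as you say. The missing idea in your proposal is precisely this use of the MCF/RMCF correspondence to eliminate the drift rather than trying to scale it away.
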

\begin{rem}
	We emphasize that, in the self-expander case, all eigenfunctions are decreasing rapidly in the spatial variables by \cref{eigenfunction-decay}, and therefore we do not need to invoke any weighted Schauder estimates. This is in contrast to the self-shrinker case in \cite{CCMSGeneric} as the eigenfunctions of the stability operator of self-shrinkers can potentially have polynomial growth in space, for which weighted estimates are necessary.
\end{rem}
\begin{proof}
	WLOG we assume $s = 0$. In this proof it is often convenient to work with the original MCF. To this end let $\Sigma_t = \sqrt{t}\Sigma$ denote the MCF associated to the self-expander $\Sigma$.  Since $\Sigma$ is asymptotically conical, we can find $R > 0$ such that, for any $p \in \Sigma \setminus B_R(0)$ and $t \in [e^{-1},1]$, $\Sigma_t \cap B_2(p)$ can be parametrized as a normal graph over some open subset of $T_p\Sigma$ with small $C^3$ norm, i.e. there exists $\psi_p: \Omega \times [e^{-1},1] \subset T_p\Sigma \times [0,1] \to \R$ such that $\psi_p(0,1) = 0$,
	\begin{align}
	\label{transfer-estimate}
		\sum_{i=0}^3 \abs{\nabla^i_{\R^n} \psi_p} + \sum_{i=0}^1 \abs{\partial_t \nabla^i_{\R^n}\psi_p} \le \eps,
	\end{align}
	and 
	\begin{align*}
		\Psi_p(x,t) = \xX(p) + \xX(x) + \psi_p(x,t) \nN_\Sigma(p)
	\end{align*}
	parametrizes $\Sigma_t \cap B_2(p)$, $t \in [e^{-1},1]$. \par 
	We now fix $p \in \Sigma \setminus B_R(0)$ and consider, on $\Omega \times [e^{-1},1]$,
	\begin{align*}
		w(x,t) = t^{\frac{1}{2}} v(t^{-\frac{1}{2}}\Psi_p(x,t), \log t).
	\end{align*}
	By \cref{linear-problem} and the chain rule we see that $w$ satisfies the equation 
	\begin{align*}
		\frac{\partial w}{\partial t} - \Delta_{\Sigma_t} w - \abs{A_{\Sigma_t}}^2 w =  t^{-\frac{1}{2}} h(t^{-\frac{1}{2}} \Psi_p(x,t), \log t),
	\end{align*}
	 Since $h \in C_P^{0,\alpha}$, $\abs{\Psi_p(x,t)} < 2\abs{\xX(p)}$, and $t \in [e^{-1},1]$ which is compact, it follows that the right hand side $g_p = t^{-\frac{1}{2}} h(t^{-\frac{1}{2}} \Psi_p(x,t), \log t)$ is also H\"{o}lder continuous in spacetime with 
	\begin{align*}
		\abs{g_p(x,t_1) - g_p(y,t_2)} \le C \norm{h}_{C^{0,\alpha}_P}(\abs{x-y}^\alpha + \abs{t_1 - t_2}^{\frac{\alpha}{2}}).
	\end{align*}
	We may now invoke interior Schauder estimates (see eg. \cite{LSU}) to conclude that (note that $\abs{A_{\Sigma_t}}^2 \le C\abs{\xX}^{-2}$ on $[e^{-1},1]$)
	\begin{align*}
		\sup_{\Omega \times [e^{-1},1]} \left(\abs{\nabla_{\R^n} w} + \abs{\nabla^2_{\R^n} w}\right) + [\nabla_{\R^n}^2 w]_{\alpha; \Omega \times[e^{-1},1]}\le C \norm{h}_{C^{0,\alpha}_P(\Sigma \times (-\infty,0])}.
	\end{align*}
	Rescaling back and using \cref{transfer-estimate}, we see that 
	\begin{align*}
		\sup_{s \in [-1,0]}\left( \abs{\nabla_\Sigma v(p,s)} + \abs{\nabla_\Sigma^2 v(p,s)}\right) + [\nabla_\Sigma^2 v]_{\alpha;\Omega \times [-1,0]}\le C \norm{h}_{C^{0,\alpha}_P}.
	\end{align*}
	As $p \in \Sigma \setminus B_R(0)$ is arbitrary, we see the desired estimates hold on $(\Sigma \setminus B_R(0)) \times [-1,0]$. The estimates on $\Sigma \cap B_R(0)$ follows from standard parabolic Schauder estimates for compact domains. 
\end{proof}
It is standard to bootstrap \cref{schauder-estimate} to obtain higher order Schauder estimates, and a simple modification of the proof gives the corresponding estimates for $\partial_s v$. These lead to the following exponentially weighted variant, which follows easily from \cref{schauder-estimate}.
\begin{cor}
\label{main-schauder-estimates}
	Suppose that there is $\delta > 0$ such that
	\begin{align*}
		\sup_{s < 0} e^{-\delta s} \norm{h}_{C^{0,\alpha}_P(\Sigma \times [s - 1, s])} < \infty.
	\end{align*}
	Let $v$ be the solution to \cref{linear-problem} with \cref{zero-initial-data} constructed in \cref{existence-linear-problem}. Then for every $0< \delta' < \min\{\delta, -\lambda_I\}$ we have, for $s < 0$
	\begin{align*}
		e^{-\delta' s} \norm{v}_{C^{2,\alpha}_P(\Sigma \times [s-1,s])} \le C \sup_{\sigma < 0} e^{-\delta \sigma} \norm{h}_{C^{0,\alpha}_P(\Sigma \times [\sigma - 1, \sigma])}.
	\end{align*} 
\end{cor}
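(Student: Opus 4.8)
The plan is to derive the exponentially weighted estimate from the unweighted Schauder estimate of \cref{schauder-estimate} by a time-translation-and-rescaling argument, exploiting the fact that the weight $e^{-\delta s}$ is essentially constant on a unit time interval. First I would fix $s_0 < 0$ and consider the translated function $\hat v(p,\sigma) = v(p, \sigma + s_0)$ together with $\hat h(p,\sigma) = h(p,\sigma+s_0)$, which satisfies the same linear equation \cref{linear-problem} on $\Sigma \times (-\infty, -s_0)$. The issue is that \cref{schauder-estimate} was stated for solutions with zero negative-mode data at time $0$, whereas $\hat v$ need not satisfy $\Pi_-(\hat v(\cdot, 0)) = 0$. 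However, one can split $\hat v = \hat v_{\mathrm{good}} + w$, where $w = \tau_-(\Pi_-(\hat v(\cdot,0)))$ solves the homogeneous equation $\partial_\sigma w = L_\Sigma w$ and $\hat v_{\mathrm{good}}$ has zero negative projection; by the explicit formulas in \cref{existence-linear-problem}, the negative modes $v_i(s)$ for $i \le I$ are bounded by $Ce^{-\lambda_i s}(\cdots)$ times the weighted norm of $h$, so $w$ contributes a term controlled by $e^{\delta' s_0}\sup_\sigma e^{-\delta\sigma}\|h\|_{C^{0,\alpha}_P(\cdots)}$ times the (rapidly decaying, hence uniformly bounded in the relevant norm) eigenfunctions $\phi_i$, using \cref{eigenfunction-decay}.

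Next I would apply \cref{schauder-estimate} — or rather the bootstrapped higher-order version and the analogous estimate for $\partial_s v$ mentioned just before the corollary — to $\hat v_{\mathrm{good}}$ on the interval $[s_0-1, s_0]$ (which becomes $[-1,0]$ after translation). This yields
\begin{align*}
    \|v\|_{C^{2,\alpha}_P(\Sigma \times [s_0-1,s_0])} \le C\left( \|h\|_{C^{0,\alpha}_P(\Sigma \times (-\infty, s_0])} + e^{\delta' s_0}\sup_{\sigma<0} e^{-\delta\sigma}\|h\|_{C^{0,\alpha}_P(\Sigma\times[\sigma-1,\sigma])}\right).
\end{align*}
Wait — the term $\|h\|_{C^{0,\alpha}_P(\Sigma\times(-\infty,s_0])}$ is not directly of the right form, so in fact I would instead restrict attention from the start to $h$ supported (in the relevant norm sense) so that only $h|_{[s_0-1,s_0]}$ enters the interior Schauder estimate: by interior parabolic regularity the right-hand side of \cref{linear-problem} only needs to be controlled on $[s_0-1,s_0]$, giving $C\|h\|_{C^{0,\alpha}_P(\Sigma\times[s_0-1,s_0])} \le Ce^{\delta s_0}\sup_{\sigma<0}e^{-\delta\sigma}\|h\|_{C^{0,\alpha}_P(\Sigma\times[\sigma-1,\sigma])}$. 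The only genuinely nonlocal input is the control of $\Pi_-(\hat v(\cdot,0))$, which by the formula for $v_i$, $i\le I$, is an integral of $h_i(\sigma)$ over all $\sigma<0$ and hence is bounded by $Ce^{\delta' s_0}$ times the weighted sup of $\|h\|$ (here one uses $\delta' < -\lambda_I$ so that $\int e^{\lambda_i \sigma - \delta\sigma}d\sigma$ converges appropriately after the translation bookkeeping).

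Multiplying through by $e^{-\delta' s_0}$ and noting $e^{-\delta' s_0}\cdot e^{\delta s_0} = e^{(\delta-\delta')s_0} \le 1$ for $s_0 < 0$ (since $\delta' < \delta$) collapses both contributions to the desired bound. Finally, since $s_0 < 0$ was arbitrary, taking the supremum over $s_0$ gives the stated inequality with a uniform constant $C = C(\Sigma, \delta, \delta', \alpha)$. The main obstacle is the careful separation of the genuinely nonlocal piece (the negative spectral modes, whose size over $(-\infty,0]$ must be tracked and absorbed into the weighted norm via the exponential decay of $\phi_1,\dots,\phi_I$ and the constraint $\delta' < -\lambda_I$) from the local interior Schauder piece; once that split is clean, the weight manipulation $e^{(\delta-\delta')s_0}\le 1$ is routine. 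I expect the bootstrapping to higher order and the $\partial_s v$ estimate to be entirely standard given \cref{schauder-estimate}, so I would not belabor them.
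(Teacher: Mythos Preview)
The paper gives no proof beyond the remark that the corollary ``follows easily from \cref{schauder-estimate},'' so your proposal is necessarily more detailed than what appears there. Your overall scheme---time-translate to center at $s_0$, split off the negative modes $w=\tau_-(\Pi_-(\hat v(\cdot,0)))$, apply \cref{schauder-estimate} as a black box to the remainder, and control $w$ via the explicit formulas from \cref{existence-linear-problem} together with the spatial decay \cref{eigenfunction-decay}---is correct.

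That said, your ``Wait'' is a false alarm and the ensuing detour is unnecessary. The term $\norm{h}_{C^{0,\alpha}_P(\Sigma\times(-\infty,s_0])}$ \emph{is} already of the right form: the hypothesis gives $\norm{h}_{C^{0,\alpha}_P(\Sigma\times[\sigma-1,\sigma])}\le M e^{\delta\sigma}$, and since $\delta>0$ and $\sigma\le s_0$, the supremum over $\sigma\le s_0$ is at most $CMe^{\delta s_0}$; multiplying by $e^{-\delta' s_0}$ and using $\delta'<\delta$, $s_0<0$ gives $e^{(\delta-\delta')s_0}\le 1$, which is exactly the bound you want. So your first displayed inequality (before the ``Wait'') already closes, and there is no need to argue separately that interior Schauder only sees $h$ on $[s_0-1,s_0]$.

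A slightly cleaner alternative, likely what the paper has in mind, avoids the spectral splitting altogether: the \emph{proof} of \cref{schauder-estimate} is a purely local interior Schauder estimate and never uses the condition $\Pi_-(v(\cdot,0))=0$; that condition enters only through \cref{existence-linear-problem}, whose last estimate already carries the weight $e^{-\delta' s}$ (and is where the constraint $\delta'<-\lambda_I$ is actually used). Combining that weighted $W$ bound on $v$ with the local Schauder step and the trivial comparison of weights on unit intervals gives the result in one stroke. Your route reaches the same end; it just takes a scenic detour through the negative eigenspace that is not needed.
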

We are now in position to prove the existence and uniqueness theorem for \cref{nonlinear-problem}. \cref{main-theorem} follows easily from the following. 
\begin{thm}[cf. Theorem 3.3 of \cite{ChoiMantoulidis}]
	\label{ancient-solution-existence}
	Let $\delta_0 \in (0,-\lambda_I)$. For every sufficiently large $\beta$, there exists $\eps = \eps(\Sigma,\delta_0,\beta)$ such that the following holds: for any $(a_1,\ldots,a_I) \in B_\eps^I(0)$, there exists a unique ancient solution $v$ to \cref{nonlinear-problem} satisfying $\Pi_{-}v(\cdot, 0) = \sum_{i=1}^I a_i \phi_i$ and
	\begin{align}
		\label{closeness-estimate}
		e^{-\delta_0 s}\norm{v - \tau_{-}(a_1,\ldots,a_I)}_{C^{2,\alpha}_{P}(\Sigma \times [s-1,s])} \le \beta \sum_{i=1}^I a_i^2
	\end{align}
	for all $s \le 0$.
\end{thm}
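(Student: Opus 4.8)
The plan is to set up a fixed-point argument in the weighted parabolic H\"{o}lder space, using the linear theory from \cref{existence-linear-problem} and \cref{main-schauder-estimates} as the solution operator. Concretely, fix $\delta_0 \in (0,-\lambda_I)$ and consider the Banach space $X_{\delta_0}$ of functions $w$ on $\Sigma \times (-\infty,0]$ with $\Pi_-(w(\cdot,0)) = 0$ and finite norm $\norm{w}_{X_{\delta_0}} = \sup_{s \le 0} e^{-\delta_0 s}\norm{w}_{C^{2,\alpha}_P(\Sigma \times [s-1,s])}$. Given $a = (a_1,\ldots,a_I) \in B_\eps^I(0)$, write the desired solution as $v = \tau_-(a) + w$ with $w \in X_{\delta_0}$; then \cref{nonlinear-problem} becomes $\partial_s w = L_\Sigma w + h[w]$ where $h[w] = Q(v,\xX\cdot\nabla_\Sigma v, \nabla_\Sigma v, \nabla_\Sigma^2 v)$ with $v = \tau_-(a)+w$. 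Define $\mathcal{T}(w)$ to be the unique solution with zero $\Pi_-$-data at $s=0$ produced by \cref{existence-linear-problem} with inhomogeneity $h[w]$. A fixed point of $\mathcal{T}$ is exactly the sought solution, and by construction $\Pi_-(v(\cdot,0)) = \Pi_-(\tau_-(a)(\cdot,0)) = \sum a_i\phi_i$.

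The heart of the argument is the quadratic estimate on the nonlinearity. Using the structure \cref{asymptotic-expansion-q} of $Q$ together with the decay estimates \cref{nonlinear-error-estimate} and the rapid spatial decay of eigenfunctions \cref{eigenfunction-decay}, one shows that for $w_1, w_2$ in the ball $\{\norm{w}_{X_{\delta_0}} \le \beta\abs{a}^2\}$ (intersected with a fixed small $C^{2,\alpha}_P$ ball so that the graph is well-defined and \cref{expander-mean-curvature-formula} applies), the inhomogeneity satisfies
\begin{align*}
	\sup_{s\le 0} e^{-2\delta_0 s}\norm{h[w]}_{C^{0,\alpha}_P(\Sigma\times[s-1,s])} \le C\big(\abs{a}^2 + \norm{w}_{X_{\delta_0}}\big)^2,
\end{align*}
and a corresponding Lipschitz bound $\sup_s e^{-2\delta_0 s}\norm{h[w_1]-h[w_2]}_{C^{0,\alpha}_P} \le C(\abs{a}+\beta\abs{a}^2)\norm{w_1-w_2}_{X_{\delta_0}}$. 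The key point enabling the exponent $2\delta_0$ (rather than $\delta_0$) on the left is that $Q$ is quadratic, so each of the two factors contributes at least a $\tau_-(a)$-type term decaying like $e^{\delta_0 s}$ (note $\tau_-(a)$ itself decays like $e^{-\lambda_I s}$ at worst, and $-\lambda_I > \delta_0$, while the $w$-part decays like $e^{\delta_0 s}$). Feeding this into \cref{main-schauder-estimates} with $\delta = 2\delta_0$ and $\delta' = \delta_0$ (legitimate since $\delta_0 < \min\{2\delta_0, -\lambda_I\}$) gives $\norm{\mathcal{T}(w)}_{X_{\delta_0}} \le C(\abs{a}^2 + \norm{w}_{X_{\delta_0}})^2$ and $\norm{\mathcal{T}(w_1) - \mathcal{T}(w_2)}_{X_{\delta_0}} \le C(\abs{a} + \beta\abs{a}^2)\norm{w_1 - w_2}_{X_{\delta_0}}$.

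Now choose $\beta$ large so that $C\beta^2 < \beta/2$ is not quite the right bookkeeping — rather, one chooses $\beta \ge 4C$ and then $\eps$ small depending on $\beta$ (and on $C$, $\Sigma$, $\delta_0$) so that: (i) $C(\abs{a}^2 + \beta\abs{a}^2)^2 \le C\abs{a}^4(1+\beta)^2 \le \beta\abs{a}^2$, which holds once $\abs{a}^2 \le \beta/(C(1+\beta)^2)$; and (ii) the Lipschitz constant $C(\abs{a} + \beta\abs{a}^2) \le 1/2$. Then $\mathcal{T}$ maps the closed ball $\{\norm{w}_{X_{\delta_0}} \le \beta\abs{a}^2\}$ into itself and is a contraction, so the Banach fixed point theorem yields a unique $w$ there, and hence a unique $v$ satisfying \cref{closeness-estimate}. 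Smoothness of $v$ follows from bootstrapping the higher-order Schauder estimates (the remark after \cref{schauder-estimate}), and smoothness of the corresponding hypersurfaces $\tilde\Sigma_s$ together with $\tilde\Sigma_s \to \Sigma$ in $C^\infty_{\loc}$ as $s\to-\infty$ is immediate from \cref{closeness-estimate} since $\norm{v(\cdot,s)}_{C^{2,\alpha}} \to 0$. The main obstacle is purely technical: carefully tracking the weights through the nonlinear estimate to land the quadratic term at rate $2\delta_0$, which requires using both the structural decomposition \cref{asymptotic-expansion-q}–\cref{nonlinear-error-estimate} of $Q$ and the exponential spatial decay \cref{eigenfunction-decay} of the $\phi_i$ so that the $\xX$-weights in \cref{nonlinear-error-estimate} are harmless — this is exactly where the self-expander setting is genuinely easier than the self-shrinker case and no weighted H\"{o}lder norms are needed.
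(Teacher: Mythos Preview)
Your argument is correct and is essentially the same contraction-mapping scheme as the paper's proof: both solve the linearized problem with inhomogeneity $Q(v)$ via \cref{existence-linear-problem}, feed this through the weighted Schauder estimate \cref{main-schauder-estimates} with $\delta = 2\delta_0$ and $\delta' = \delta_0$, and close by a fixed-point argument (you work with $w = v - \tau_-(a)$ and $\Pi_- w(\cdot,0) = 0$, the paper works directly with $v$ and subtracts $\tau_-(a)$ afterwards, but these are the same map up to translation). One small slip: your quadratic bound should read $C(\abs{a} + \norm{w}_{X_{\delta_0}})^2$, not $C(\abs{a}^2 + \norm{w}_{X_{\delta_0}})^2$, since $\norm{\tau_-(a)}_{X_{\delta_0}} \le C\abs{a}$; with this correction the self-mapping condition becomes $C(1+\beta\abs{a})^2 \le \beta$, which is still satisfied by taking $\beta \ge 4C$ and then $\eps \le 1/\beta$, so the conclusion is unaffected.
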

\begin{proof}
	In this proof we will also treat the nonlinear term $Q$ as a function of $s$ and $p \in \Sigma$, and will write $Q(p,s)$ whenever we do so. Let us fix $a = (a_1,\ldots,a_I) \in B_\eps^I(0)$. Consider the space 
	\begin{align*}
		C^* = &\{v: \Sigma \times (-\infty,0] \to \R \mid  \\
		&e^{-\delta_0 s} \norm{v}_{C^{2,\alpha}_{P}(\Sigma \times [s-1,s])} < \infty \text{ for all } s \le 0 \text{, and } \Pi_{-}(v(\cdot,0)) = \sum_{i=1}^I a_i\phi_i\}.
	\end{align*}
	Evidently $C^*$ is a Banach space equipped with the norm
	\begin{align*}
		\norm{v}_{C^*} =  \sup_{s < 0} e^{-\delta_0 s} \norm{v}_{C^{2,\alpha}_{P}(\Sigma \times [s-1,s])}.
	\end{align*} 
	Given $v \in C^* \cap C^\infty(\Sigma \times (-\infty,0])$ we let $\Psi(v, a)$ be a solution to the linear problem 
	\begin{align*}
		\left(\frac{\partial}{\partial t} - L_\Sigma\right)\Psi(v,a) = Q(v, \xX \cdot \nabla_\Sigma v, \nabla_\Sigma v, \nabla_\Sigma^2 v). 
	\end{align*}
	Using the estimates on the nonlinear term $Q$, \cref{nonlinear-error-estimate}, we can find $\eta$ sufficiently small depending on $\Sigma$ and $\alpha$  such that
	\begin{align*}
		\norm{v}_{C^*} < \eta \implies  \int_{-\infty}^0 \abs{e^{-\delta_0 s} \norm{Q(\cdot,s)}_{W}}^2 ds  < \infty. 
	\end{align*}
	This allows us to apply \cref{linear-problem-decay} to conclude that $\Psi(v,a)$ is well-defined (i.e. the solution exists and is unique). The function 
	\begin{align*}
		\tilde{\Psi}(v,a) = \Psi(v,a) - \sum_{i=1}^I a_i e^{-\lambda_i s} \phi_i
	\end{align*}
	then (uniquely) solves the problem 
	\begin{align*}
		\begin{cases}
			\left(\frac{\partial}{\partial s} - L_\Sigma\right)w &=  Q(v, \xX \cdot \nabla_\Sigma v, \nabla_\Sigma v, \nabla_\Sigma^2 v) \text{ on } \Sigma \times (-\infty,0]  \\
			\Pi_{-} w(\cdot, 0) &= 0
		\end{cases}.
	\end{align*}
	Using the parabolic Schauder estimates \cref{main-schauder-estimates}, we get that for all $s < 0$
	\begin{align*}
		&\phantom{{}\le{}}e^{-\delta_0 s}\norm{\tilde{\Psi}(v,a)}_{C^{2,\alpha}_P(\Sigma \times [s-1,s])} \\
		&\le C\sup_{\sigma < 0} e^{-\delta_0 \sigma}\norm{Q(v,\nabla_\Sigma v, \xX \cdot \nabla_\Sigma v, \nabla^2_\Sigma v)}_{C^{0,\alpha}_P(\Sigma \times [\sigma-1,\sigma])}. 
	\end{align*}
	Using \cref{nonlinear-error-estimate} again, we see that, upon taking supremum on the left hand side 
	\begin{align*}
		\norm{\tilde{\Psi}(v,a)}_{C^*} \le C \sup_{\sigma \le 0} e^{-\delta_0 \sigma} \norm{v}_{C^{2,\alpha}_P(\Sigma \times [\sigma-1, \sigma])}.
	\end{align*}
	This shows that $\bar{\Psi}(v,a)$ is a well-defined map from $C^*$ to itself provided $\norm{v}_{C^*}$ is sufficiently small. \par 
	Repeating the above argument for another function $w$ with $\norm{w}_{C^*}$ sufficiently small shows that 
	\begin{align*}
		\norm{\Psi(v,a) - \Psi(w,a)}_{C^*} \le C(\norm{v}_{C^*} + \norm{w}_{C^*}) \norm{v -w }_{C^*}.
	\end{align*}
	Thus $\Psi(v,a)$ is a continuous contraction mapping on $C^*$ provided $\eta$ is chosen small enough. Hence by the contraction mapping theorem, there exists a unique fixed point $\Psi(a)$ of the map $\Psi(\cdot, a)$ in $C^*$. This $\Psi(a)$ is the solution we seek for the nonlinear problem \cref{nonlinear-problem} with $\Pi_- \Psi(a) = \sum_{i=1}^I a_i \phi_i$. The regularity of $\Psi(a)$ follows from standard parabolic regularity theory (with \cref{main-schauder-estimates}).
\end{proof}

\section{Relative expander entropy and uniqueness}
\label{uniqueness-section}
In this section we define a version of the relative expander entropy adapted to a normal graph, which will be suitable in our setting, and prove several useful properties. We will use the relative expander entropy to deduce the uniqueness theorem, \cref{main-uniqueness-theorem}. Throughout the section, fix a smooth cone $\mathcal{C} \subset \R^{n+1}$ and a self-expander $\Sigma$ asymptotic to $\mathcal{C}$. \par
Let $\chi_R: \R^{n+1} \to \R$ be smooth cutoff functions supported on $B_{R+2}(0)$ and identically 1 on $B_{R}(0)$. For any function $v \in C^{2,\alpha}_0(\Sigma)$, define the \textit{relative expander entropy} of $\Sigma_v$ to be 
\begin{align*}
	E_{\rel}^*[\Sigma_v,\Sigma] = \int_{\Sigma_v} \wt d\mathcal{H}^n - \int_{\Sigma} \wt d\mathcal{H}^n.
\end{align*}
For a general function $v \in W^1 \cap C^{2,\alpha}(\Sigma)$, define $E_{\rel}^*[\Sigma_v,\Sigma]$ by
\begin{align*}
	E_{\rel}^*[\Sigma_v,\Sigma] = \lim_{R \to \infty} E_{\rel}^*[\Sigma_{\chi_R v},\Sigma]
\end{align*}
whenever the limit exists. \par 
We note importantly that this notion of relative expander entropy is slightly different from the usual definition of the relative entropy from \cite{BWRelativeEntropy} and \cite{DeruelleSchulze}. We will denote by $E_{\rel}$ the quantity defined by Bernstein--Wang; $E_{\rel}$ is given by the formula
\begin{align*}
	E_{\rel}[\Sigma_1,\Sigma_2] = \lim_{R \to \infty} \left(\int_{\Sigma_1 \cap B_R(0)} \wt d \mathcal{H}^n - \int_{\Sigma_2 \cap B_R(0)} \wt d \mathcal{H}^n\right),
\end{align*}
for two hypersurfaces $\Sigma_1$ and $\Sigma_2$, whenever the limit is defined (possibly $\infty$). In particular, they showed in \cite{BWRelativeEntropy} that when $\Sigma_1$ is a hypersurface trapped between two self-expanders asymptotic to the same cone $\mathcal{C}$, then $E_{\rel}[\Sigma_1, \Gamma]$ is well-defined (possibly $\infty$, but not $-\infty$) for any self-expander $\Gamma$ asymptotic to $\mathcal{C}$. Because of this, $E_{\rel}$ is the natural and more suitable quantity to study in the trapped case (and, in fact, $E_{\rel} = E_{\rel}^*$ in the trapped case - see \cref{entropy-equivalence}). Unfortunately, in order for a graph $\Sigma_v$ to be trapped, $v$ needs to have a very good spatial decay near infinity:
\begin{align*}
	v(p) = O(\abs{\xX(p)}^{-n-1} e^{-\frac{\abs{\xX(p)}^2}{4}}).
\end{align*}
As we are working with normal graphs that a priori do not have the sharp decay (rather only an energy bound), we will have to use $E_{\rel}^*$ instead of $E_{\rel}$ for now. See \cref{cone-section} for further discussion on the trapping assumption and the difference between $E_{\rel}$ and $E^*_{\rel}$. \par 
We will show that $E_{\rel}^*$ is well-defined if the function $v$ has small $C^{2,\alpha}$ norm.
\begin{prop}
	\label{motivating-computation}
	Suppose $v \in C^{2,\alpha}_0(\Sigma)$ function. There exists $\eps = \eps(\Sigma)$ sufficiently small such that the following inequality holds:
	\begin{align}
		\label{expansion-formula}
		\abs{E_{\rel}^*[\Sigma_v,\Sigma] - \frac{1}{2} \int_\Sigma (\abs{\nabla_\Sigma v}^2 + (\frac{1}{2} - \abs{A_\Sigma}^2) v^2) \wt} \le C \eps \norm{v}_{W^1}^2
	\end{align}
	whenever $\norm{v}_{C^{2,\alpha}} < \eps$. Here $C = C(\Sigma)$. 
\end{prop}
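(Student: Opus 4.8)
The plan is to expand the area functional $\int_{\Sigma_{\chi_R v}} \wt \, d\mathcal{H}^n$ to second order in $v$ using the formula \cref{expander-mean-curvature-formula} of Bernstein--Wang, integrate the resulting expression, and control the error terms using the smallness of $\norm{v}_{C^{2,\alpha}}$ together with the rapid decay of the geometry of $\Sigma$ near infinity. Since the relevant formula \cref{expander-mean-curvature-formula} is precisely the expansion of the first variation of $E$, the strategy is to integrate it: for the one-parameter family $v_\sigma = \sigma \chi_R v$, $\sigma \in [0,1]$, we have $\frac{d}{d\sigma} \int_{\Sigma_{v_\sigma}} \wt \, d\mathcal{H}^n = -\int_{\Sigma_{v_\sigma}} (H_{\Sigma_{v_\sigma}} - \tfrac{\xX}{2}\cdot\nN_{\Sigma_{v_\sigma}}) (\chi_R v) \, (\text{weighted volume element})$, and then substitute \cref{expander-mean-curvature-formula} with $v$ replaced by $v_\sigma$. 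The leading term $-L_\Sigma v_\sigma = -L_\Sigma(\sigma\chi_R v)$ integrated against $\chi_R v$ over $\sigma \in [0,1]$ produces, after an integration by parts (valid since $\chi_R v$ is compactly supported), exactly $\frac12 \int_\Sigma \chi_R^2 (\abs{\nabla_\Sigma v}^2 + (\tfrac12 - \abs{A_\Sigma}^2)v^2)\wt$ plus terms involving $\nabla\chi_R$; the remaining pieces come from the quadratic error $Q$ and from the fact that the weighted volume element of $\Sigma_{v_\sigma}$ differs from that of $\Sigma$.

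First I would set up the graphical parametrization $\fF_v(p) = \xX(p) + v(p)\nN_\Sigma(p)$ and record the expansion of the weighted area element $\wt[\Sigma_v] \, d\mathcal{H}^n = \wt (1 + \text{(linear in } v, \nabla v) + O(\abs{v}^2 + \abs{\nabla v}^2)) \, d\mathcal{H}^n$, where the error is controlled with constants depending only on $\Sigma$ provided $\norm{v}_{C^1}<\eps$; here one uses $\abs{A_\Sigma} = O(\abs{\xX}^{-1})$ so that the coefficients appearing are bounded. Second, I would carry out the integration-in-$\sigma$ computation above, grouping the output into (i) the quadratic main term $\frac12\int_\Sigma \chi_R^2(\abs{\nabla_\Sigma v}^2 + (\tfrac12 - \abs{A_\Sigma}^2)v^2)\wt$, (ii) cutoff error terms supported on $B_{R+2}\setminus B_R$ involving $\nabla\chi_R$ paired with $v\nabla v$ and $v^2$, and (iii) higher-order terms from $Q$ estimated via \cref{nonlinear-error-estimate}. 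Third, I would bound (iii) by $C\eps\norm{v}_{W^1}^2$ directly from the cubic-type structure of $Q$ (each term is at least quadratic in $(v,\nabla v, \nabla^2 v)$ with an extra factor controlled by $\norm{v}_{C^{2,\alpha}}<\eps$, and the weight $\wt$ is absorbed because the estimates in \cref{nonlinear-error-estimate} come with the correct powers of $\abs{\xX}$ to match the integrability afforded by $v\in W^1$). Fourth, I would take $R\to\infty$: since $v\in W^1$, the cutoff error terms (ii) tend to $0$ (this is where $\abs{\nabla\chi_R}\le C$, $\supp\nabla\chi_R\subset B_{R+2}\setminus B_R$, and $\int_{\Sigma\setminus B_R}(\abs{v}^2+\abs{\nabla v}^2)\wt \to 0$ are used) and the main term converges to $\frac12\int_\Sigma(\abs{\nabla_\Sigma v}^2 + (\tfrac12-\abs{A_\Sigma}^2)v^2)\wt$, which is finite because $\abs{A_\Sigma}^2 = O(\abs{\xX}^{-2})$ is bounded. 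This simultaneously shows the limit defining $E_{\rel}^*[\Sigma_v,\Sigma]$ exists and yields the stated inequality.

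The main obstacle I anticipate is making the error estimates genuinely uniform in $R$ and in the cutoff, in particular verifying that all the "error" terms — both the cutoff terms from $\nabla\chi_R$ and the nonlinear terms from $Q$ — carry enough decay in $\abs{\xX}$ to be controlled by $\eps\norm{v}_{W^1}^2$ after multiplication by the exponential weight $\wt$. The delicate bookkeeping is that $Q$ and the area-element expansion involve the unweighted quantities $v,\nabla v,\nabla^2 v$ and the ambient position $\xX$, and one must check, using the precise powers of $\abs{\xX}$ in \cref{nonlinear-error-estimate} and the bounds on $A_\Sigma$ and $\nabla_\Sigma A_\Sigma$, that no term produces a factor of $\abs{\xX}$ that the weight $\wt$ cannot absorb relative to the $W^1$-integral of $v$; a related subtlety is that for merely $W^1$ (not trapped) functions one only gets convergence, not an absolutely convergent improper integral, so the argument must be organized as a limit of the compactly-supported identities rather than termwise. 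Once the $\abs{\xX}$-powers are tracked carefully this is routine, but it is the heart of why the hypothesis is $\norm{v}_{C^{2,\alpha}}<\eps$ together with $v\in W^1$ rather than something weaker.
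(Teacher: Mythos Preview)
Your approach is valid but differs from the paper's. The paper proceeds by a direct pointwise computation: it pulls back $\int_{\Sigma_v}\wt\,d\mathcal{H}^n$ to $\Sigma$ via the area formula, writes the Jacobian $\sqrt{\det((D\fF_v)^T D\fF_v)}$ explicitly in normal coordinates as $\sqrt{\det(I+A)}$ for a matrix $A$ with entries $A_{ij}=\partial_i v\,\partial_j v + (v^2\kappa_i^2+2v\kappa_i)\delta_{ij}$, Taylor-expands $\det(I+A)$, $\sqrt{1+x}$, and the weight factor $e^{v^2/4}e^{-vH_\Sigma}$, and reads off the quadratic term $\tfrac12(\abs{\nabla_\Sigma v}^2+(\tfrac12-\abs{A_\Sigma}^2)v^2)$ directly, collecting all higher-order terms in a cubic remainder $M(v,\nabla_\Sigma v)$. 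Your route instead integrates the first variation along the path $\sigma\mapsto\Sigma_{\sigma v}$, invoking \cref{expander-mean-curvature-formula} to identify the integrand as $-L_\Sigma(\sigma v)+Q$; integrating $-\sigma L_\Sigma v$ against $v$ over $\sigma\in[0,1]$ and then by parts yields the same quadratic form. Your method is more conceptual and leverages the already-packaged expansion \cref{expander-mean-curvature-formula}, but it does not fully bypass the Jacobian bookkeeping: you still need the first-order expansion of the weighted area element of $\Sigma_{\sigma v}$ and of the normal projection $\nN_\Sigma\cdot\nN_{\Sigma_{\sigma v}}$ to control the cross terms, so the pointwise calculation is deferred rather than avoided. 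The paper's approach is more self-contained and makes the structure of the cubic remainder explicit, which is reused in \cref{finiteness} and \cref{reverse-poincare}.

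Two minor points. First, the hypothesis is $v\in C^{2,\alpha}_0(\Sigma)$, so $v$ is already compactly supported; the cutoffs $\chi_R$ and the limit $R\to\infty$ are unnecessary here --- you are effectively also proving \cref{finiteness}, which the paper separates out. Second, check the sign in your first-variation identity: with the paper's conventions the expander mean curvature is $H_{\Sigma_v}+\tfrac{\xX}{2}\cdot\nN_{\Sigma_v}$ (as in \cref{expander-mean-curvature-formula}), not $H-\tfrac{\xX}{2}\cdot\nN$.
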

\begin{rem}
	Integrating by parts (which is justified as $v$ is compactly supported) gives that the second order term in \cref{expansion-formula} is exactly equal to 
	\begin{align}
		\label{second-order-term}
		- \frac{1}{2}\int_\Sigma vL_\Sigma v  \wt.
	\end{align} 
	This is not surprising as $\Sigma_v$ can be thought of as a perturbation of $v$ when $\Sigma_v$ is sufficiently close to $\Sigma$. Since $\Sigma$ is $E$-stationary, the relative entropy should pick up the second-order information, which is precisely the stability operator.
\end{rem}
\begin{proof}
	We will proceed by explicit computation. By the area formula we can write
	\begin{align}
		\label{relative-area-expansion}
		\int_{\Sigma_v} \wt d\mathcal{H}^n = \int_{\Sigma} \sqrt{\det((Dv)^T(Dv))} e^{\frac{\abs{\xX}^2}{4}} e^{\frac{v^2}{4}} e^{-vH_\Sigma},
	\end{align}
	where we used the self-expander equation \cref{expander-equation}. Here  
	\begin{align*}
		Dv = I_{(n+1) \times n} + \nabla_\Sigma v \otimes \nN_\Sigma + v \nabla_\Sigma \nN_\Sigma,
	\end{align*}
	where $I_{(n+1) \times n}$ denotes the $(n+1)$-by-$n$ matrix which is the identity in the top $n$ rows and 0 in the last row. Given $p \in \Sigma$, the matrix $Dv$ can be written in normal coordinates centered at $p$ as 
	\begin{align*}
		Dv = \begin{pmatrix}
			1+ v\kappa_1 & \cdots & 0 \\
			\vdots & \ddots & \vdots \\
			0 & \cdots & 1 + v\kappa_n \\
			\partial_1 v  & \cdots & \partial_n v 
		\end{pmatrix} ,
	\end{align*}
	where $\kappa_1, \ldots, \kappa_n$ are the principle curvatures of $\Sigma$. Hence the Jacobian matrix $(Dv)^T(Dv)$ takes the form
	\begin{align*}
		\begin{pmatrix}
			(1+v \kappa_1)^2 + (\partial_1 v)^2 & \partial_1v \partial_2 v & \cdots & \partial_1 v \partial_n v \\
			\partial_1v\partial_2 v & (1 + v\kappa_2)^2 + (\partial_2 v)^2 & \cdots & \partial_2v \partial_n v \\
			\vdots & \ddots& & \vdots \\
			\partial_1v \partial_n v& \partial_2v \partial_nv & \cdots & (1+v\kappa_n)^2 + (\partial_n v)^2 \\
		\end{pmatrix}
	\end{align*}
	Equivalently, the above can be written as $I_n + A$, where $A$ has entries
	\begin{align*}
		A_{ij} = \partial_i v \partial_j v + (v^2\kappa_i^2 + 2v \kappa_i)\delta_{ij}
	\end{align*}
	We now expand the determinant using its series expansions, whose validity is justified by the fact that $\norm{v}_{C^{2,\alpha}} < \eps$:
	\begin{align*}
		\det(I + \eps A) = 1 + \tr(A) + \frac{1}{2}\eps^2 (\tr^2(A) - \tr(A^2)) + O(\eps^3).
	\end{align*}
	In the following we will use $M(v,\nabla_\Sigma v)$ to denote a polynomial of degree at least 3 in $v$ and $\nabla_\Sigma v$ whose coefficients depend only on $\Sigma$. The exact form of $M$ may change from line to line. We compute that 
	\begin{gather*}
		\tr(A) = \abs{\nabla_\Sigma v}^2 + \abs{A_\Sigma}^2 v^2 + 2vH_\Sigma \\
		\tr(A^2) = \sum_{i,j} \abs{\partial_i v \partial_j v}^2 + \sum_{i=1}^n, \left(2\abs{\partial_i v}^2(v^2 \kappa_i^2 + 2v \kappa_i) + v^4 \kappa_i^4 + 4v^3 \kappa_i^3\right) + 4v^2 \abs{A_\Sigma}^2.
	\end{gather*}
	Thus 
	\begin{align*}
		\tr^2(A) - \tr(A^2) = 4(\abs{H_\Sigma}^2 - \abs{A_\Sigma}^2)v^2 + M(v, \nabla_\Sigma v).
	\end{align*}
	Putting the above computations together gives 
	\begin{align}
		\label{determinant-expansion}
		\det(I+A) = 1 + \abs{\nabla_\Sigma v}^2 + 2H_\Sigma v +  (2\abs{H_\Sigma}^2 - \abs{A_\Sigma}^2) v^2 + M(v, \nabla_\Sigma v).
	\end{align}
	Finally, using the Taylor expansion
	\begin{align*}
		\sqrt{1+x} = 1 + \frac{1}{2}x - \frac{1}{8} x^2 + O(x^3),
	\end{align*}
	we get that 
	\begin{align*}
		\sqrt{\det(I+A)} = 1 + \frac{1}{2} \abs{\nabla_\Sigma v}^2 + H_\Sigma v + \frac{1}{2}(\abs{H_\Sigma}^2 - \abs{A_\Sigma}^2) v^2 + M(v, \nabla_\Sigma v).
	\end{align*}
	Using the above in \cref{relative-area-expansion} together with a Taylor expansion on the exponential terms, we obtain (where we wrote $\eps = \norm{v}_{C^{2,\alpha}}$)
	\begin{align}
		&\phantom{{}={}}\int_{\Sigma_v} \wt d\mathcal{H}^n \nonumber \\
		&= \int_\Sigma (1 + \frac{1}{2} \abs{\nabla_\Sigma v}^2 + H_\Sigma v + \frac{1}{2}(\abs{H_\Sigma}^2 - \abs{A_\Sigma}^2) v^2 + M(v, \nabla_\Sigma v)) \nonumber\\
		&\phantom{{}={}\int_\Sigma}(1 + \frac{1}{4}v^2 - vH_\Sigma + \frac{1}{2}v^2\abs{H_\Sigma}^2 M(v, \nabla_\Sigma v)) \wt \nonumber \\
		&= \int_\Sigma (1 + \frac{1}{2}\abs{\nabla_\Sigma v}^2 + \frac{1}{2}(\frac{1}{2}  - \abs{A_\Sigma}^2) v^2+ M(v, \nabla_\Sigma v))\wt \label{expansion-1}
	\end{align}
	Subtracting the above from $\int_\Sigma \wt$ and using that 
	\begin{align*}
		\int_\Sigma M(v, \nabla_\Sigma v) \wt \le C\norm{v}_{C^{2,\alpha}} \norm{v}_{W^1}^2 \le C\eps \norm{v}_{W^1}^2
	\end{align*}
	gives the desired formula. 
\end{proof}
\begin{cor}
	\label{finiteness}
	There exists $\eps> 0$ such that if $v \in W^1 \cap C^{2,\alpha}(\Sigma)$ satisfies $\norm{v}_{C^{2,\alpha}} < \eps$, then $-\infty < E_{\rel}^*[\Sigma_v,\Sigma] < \infty$. 
\end{cor}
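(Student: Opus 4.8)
The plan is to work directly from the definition $E_{\rel}^*[\Sigma_v,\Sigma] = \lim_{R\to\infty} E_{\rel}^*[\Sigma_{\chi_R v},\Sigma]$ and to show that for $\norm{v}_{C^{2,\alpha}}$ below a threshold $\eps = \eps(\Sigma)$ this limit both exists and is finite. Since $\chi_R$ makes its transition in the fixed-width annulus $B_{R+2}(0)\setminus B_R(0)$, its $\R^{n+1}$-derivatives of all orders are bounded uniformly in $R$, so $\norm{\chi_R v}_{C^{2,\alpha}(\Sigma)} \le C(\Sigma)\norm{v}_{C^{2,\alpha}(\Sigma)}$ and likewise $\norm{\chi_R v}_{W^1} \le C(\Sigma)\norm{v}_{W^1}$; note $\chi_R v \in C^{2,\alpha}_0(\Sigma)$. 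Choosing $\eps$ so that $C(\Sigma)\eps$ lies below the threshold of \cref{motivating-computation}, that proposition applies to each $\chi_R v$ and gives $\abs{E_{\rel}^*[\Sigma_{\chi_R v},\Sigma]} \le C\norm{\chi_R v}_{W^1}^2 \le C\norm{v}_{W^1}^2 < \infty$ uniformly in $R$; hence any subsequential limit is automatically finite, and it remains only to prove that the full limit exists.

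To this end I would show that $R \mapsto E_{\rel}^*[\Sigma_{\chi_R v},\Sigma]$ is Cauchy as $R\to\infty$, by a tail estimate. By the area formula, $E_{\rel}^*[\Sigma_{\chi_R v},\Sigma] = \int_\Sigma (J_{\chi_R v} - 1)\,\wt$, where $J_w$ denotes the Jacobian factor appearing on the right of \cref{relative-area-expansion}. For $R < R'$ the two cutoffs coincide on $B_R(0)$, so $J_{\chi_{R'}v} = J_{\chi_R v}$ there and the difference localizes: $E_{\rel}^*[\Sigma_{\chi_{R'}v},\Sigma] - E_{\rel}^*[\Sigma_{\chi_R v},\Sigma] = \int_{\Sigma\setminus B_R(0)}(J_{\chi_{R'}v} - J_{\chi_R v})\,\wt$. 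The input needed here is the pointwise estimate $\abs{J_w - 1} \le C(\Sigma)(\abs{\nabla_\Sigma w}^2 + w^2)$ for $\norm{w}_{C^{2,\alpha}} < \eps$; this is exactly what the computation in the proof of \cref{motivating-computation} produces, the key point being that, because $\Sigma$ is a self-expander, the linear-in-$w$ term $H_\Sigma w$ in $\sqrt{\det}$ is cancelled by the factor $e^{-wH_\Sigma}$ (so that the integrand identity \cref{expansion-1} has no linear term and $J_w - 1$ is genuinely quadratic). Combining this with $\abs{\nabla_\Sigma(\chi_R v)}^2 \le 2\abs{\nabla_\Sigma v}^2 + C v^2$ yields $\abs{E_{\rel}^*[\Sigma_{\chi_{R'}v},\Sigma] - E_{\rel}^*[\Sigma_{\chi_R v},\Sigma]} \le C\int_{\Sigma\setminus B_R(0)}(\abs{\nabla_\Sigma v}^2 + v^2)\,\wt$, and since $v \in W^1(\Sigma)$ this tail tends to $0$ as $R\to\infty$. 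Hence the family is Cauchy and converges to a finite limit, which is the assertion.

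I do not expect a serious obstacle here: everything reduces to \cref{motivating-computation}. The only points requiring attention are (i) extracting the pointwise Jacobian bound, together with the cancellation of the linear term, from the \emph{proof} of \cref{motivating-computation} rather than from its integrated statement, and (ii) the uniform-in-$R$ bounds on the cutoffs — both routine. One should note that the more naive route, namely observing $\chi_R v \to v$ in $W^1(\Sigma)$ and passing to the limit in the quadratic integral of \cref{motivating-computation} by dominated convergence, does \emph{not} by itself produce a Cauchy sequence, since the error term $C\eps\norm{v}_{W^1}^2$ in \cref{motivating-computation} does not vanish; it is the localized tail estimate above that makes the argument close.
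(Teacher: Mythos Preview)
Your proposal is correct and follows essentially the same route as the paper: both arguments show that $R\mapsto E_{\rel}^*[\Sigma_{\chi_R v},\Sigma]$ is Cauchy by localizing the difference to $\Sigma\setminus B_R(0)$, invoking the expansion from \cref{motivating-computation} to see that the integrand there is quadratic in $(v,\nabla_\Sigma v)$, and then using $v\in W^1$ to kill the tail. Your packaging via the pointwise bound $\abs{J_w-1}\le C(\abs{\nabla_\Sigma w}^2+w^2)$ is a bit more streamlined than the paper's explicit term-by-term bookkeeping, but the content is the same; note only that, as the paper remarks, one should use the Taylor \emph{remainder} form of the $\sqrt{1+x}$ expansion (rather than the infinite series) when extracting that pointwise bound.
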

\begin{proof}
	For a general $v$, inserting $\chi_Rv$ in place of $v$ in \cref{expansion-1} yields
	\begin{align}
	\label{expansion-2}
		E_{\rel}^*[\Sigma_{\chi_R v}, \Sigma] &= \frac{1}{2} \int_\Sigma  (\abs{\nabla_\Sigma (\chi_Rv)}^2 + (\frac{1}{2} - \abs{A_\Sigma}^2) v^2 \chi_R^2 + M(\chi_Rv, \nabla_\Sigma \chi_R v))  \wt\nonumber\\
		&= \frac{1}{2} \int_{\Sigma} ((\abs{\nabla_\Sigma \chi_R}^2 + \frac{1}{2} - \abs{A_\Sigma}^2)v^2 + \chi_R^2 \abs{\nabla_\Sigma v}^2 \nonumber\\
		& \phantom{{}\le{}} +2 \chi_R v \nabla_\Sigma \chi_R \cdot \nabla_\Sigma v  + M(\chi_Rv, \nabla_\Sigma \chi_R v)) \wt 
	\end{align}
	Thus for $R_1 > R_2$, we get from \cref{expansion-2}
	\begin{align*}
		&\phantom{{}={}}E_{\rel}^*[\Sigma_{\chi_{R_1} v}, \Sigma] -E_{\rel}^*[\Sigma_{\chi_{R_2}v}, \Sigma] \\
		&= \frac{1}{2} \int_\Sigma ( (\abs{\nabla_\Sigma \chi_{R_1}}^2 - \abs{\nabla_\Sigma \chi_{R_1}}^2 )v^2 + (\chi_{R_1}^2 - \chi_{R_2}^2) \abs{\nabla_\Sigma v}^2 \\
		&\phantom{{}={}}	+ 2\chi_{R_1} v \nabla_\Sigma \chi_{R_1} \cdot \nabla_\Sigma v - 2\chi_{R_2} v \nabla_\Sigma \chi_{R_2} \cdot \nabla_\Sigma v \\
		&\phantom{{}={}} + M(\chi_{R_1} v, \nabla_\Sigma \chi_{R_1} v) - M(\chi_{R_2} v, \nabla_\Sigma \chi_{R_2} v) )\wt.
	\end{align*}
	As $v \in W^2$, given $\eta$, there is $R_0$ such that for $R_1, R_2 > R_0$,
	\begin{align*}
		\int_{B_{R_1}(0) \setminus B_{R_2}(0)} (\abs{\nabla_\Sigma v}^2 + v^2)\wt < \eta.
	\end{align*}
	As $\nabla_\Sigma \chi_{R_1} - \nabla_{\Sigma} \chi_{R_2}$ is supported on $\R^{n+1} \setminus B_{R_2}(0)$ and $\chi_{R_1} - \chi_{R_2}$ is supported on $B_{R_1+2}(0) \setminus B_{R_2+2}(0)$, there is $R_0$ such that $R_1 > R_2 > R_0$ implies 
	\begin{gather*}
		\int_\Sigma \abs{(\abs{\nabla_\Sigma \chi_{R_1}}^2 - \abs{\nabla_\Sigma \chi_{R_1}}^2 )v^2 + (\chi_{R_1}^2 - \chi_{R_2}^2) \abs{\nabla_\Sigma v}^2) }\wt  < \frac{\eta}{3}, \\
		\int_\Sigma \abs{2\chi_{R_1} v \nabla_\Sigma \chi_{R_1} \cdot \nabla_\Sigma v - 2\chi_{R_2} v \nabla_\Sigma \chi_{R_2} \cdot \nabla_\Sigma v}\wt < \frac{\eta}{3}.
	\end{gather*}
	For the error term with $M$, one has to first rewrite it into the remainder form (in particular the expansion for $\sqrt{1+x}$ introduces infinitely many terms, which is undesirable in this proof), and then uses the same argument as above to conclude 
	\begin{align*}
		\int_\Sigma \abs{ M(\chi_{R_1} v, \nabla_\Sigma \chi_{R_1} v) - M(\chi_{R_2} v, \nabla_\Sigma \chi_{R_2} v)} \wt < \frac{\eta}{3}
	\end{align*}
	Hence $E_{\rel}^*[\Sigma_{\chi_R v},\Sigma]$ is a Cauchy sequence, and it follows that
	\begin{align*}
		-\infty < E_{\rel}^*[\Sigma_v, \Sigma] = \lim_{R \to \infty} E_{\rel}^*[\Sigma_{\chi_Rv}, \Sigma] < \infty& \qedhere.
	\end{align*}
\end{proof}
Another immediate consequence of \cref{motivating-computation} is the reverse Poincar\'{e} inequality for $E_{\rel}^*$.
\begin{cor}
	\label{reverse-poincare}
	If $v \in W^1 \cap C^{2,\alpha}(\Sigma)$ satisfies $\norm{v}_{C^{2,\alpha}} < \eps$, then
	\begin{align*}
		E_{\rel}^*[\Sigma_v, \Sigma] \ge C_1 \int_{\Sigma} \abs{\nabla_\Sigma v}^2 \wt - C_2 \int_{\Sigma} v^2 \wt.
	\end{align*} 
	for $C_1, C_2$ depending only on $\Sigma$. Specifically, if $E_{\rel}^*[\Sigma_v,\Sigma] \le 0$,
	\begin{align*}
		\int_\Sigma v^2 \wt \ge C \int_\Sigma \abs{\nabla_\Sigma v}^2 \wt.
	\end{align*}
\end{cor}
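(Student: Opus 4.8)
The plan is to read both inequalities off directly from the second-order expansion of \cref{motivating-computation}. First suppose $v$ is compactly supported with $\norm{v}_{C^{2,\alpha}} < \eps$. Rearranging \cref{expansion-formula} gives
\[
	E_{\rel}^*[\Sigma_v,\Sigma] \ge \frac{1}{2}\int_\Sigma \abs{\nabla_\Sigma v}^2 \wt + \frac{1}{2}\int_\Sigma\Bigl(\frac{1}{2}-\abs{A_\Sigma}^2\Bigr)v^2\wt - C\eps\norm{v}_{W^1}^2 .
\]
Since $\Sigma$ is smooth and asymptotically conical, $\abs{A_\Sigma}$ is bounded on $\Sigma$ (indeed $\abs{A_\Sigma}=O(\abs{\xX}^{-1})$), so $\frac{1}{2}-\abs{A_\Sigma}^2 \ge -C_0$ for some $C_0 = C_0(\Sigma)$. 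Expanding $\norm{v}_{W^1}^2 = \int_\Sigma(\abs{\nabla_\Sigma v}^2 + v^2)\wt$ and collecting terms,
\[
	E_{\rel}^*[\Sigma_v,\Sigma] \ge \Bigl(\frac{1}{2}-C\eps\Bigr)\int_\Sigma \abs{\nabla_\Sigma v}^2\wt - \Bigl(\frac{C_0}{2}+C\eps\Bigr)\int_\Sigma v^2\wt .
\]
Shrinking $\eps$ so that $C\eps < \frac{1}{4}$ yields the first inequality with $C_1=\frac14$ and $C_2=\frac{C_0}{2}+\frac14$.

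To handle a general $v\in W^1\cap C^{2,\alpha}(\Sigma)$, I would apply the estimate above to $\chi_R v$ and let $R\to\infty$, exactly as in the proof of \cref{finiteness}: the cutoff gradients $\nabla_\Sigma\chi_R$ are uniformly bounded and supported in $B_{R+2}(0)\setminus B_R(0)$, over which $\int(\abs{\nabla_\Sigma v}^2+v^2)\wt\to 0$ because $v\in W^1$; hence $\int_\Sigma\abs{\nabla_\Sigma(\chi_R v)}^2\wt\to\int_\Sigma\abs{\nabla_\Sigma v}^2\wt$, $\int_\Sigma(\chi_R v)^2\wt\to\int_\Sigma v^2\wt$, and $\norm{\chi_R v}_{W^1}\le C\norm{v}_{W^1}$, while $E_{\rel}^*[\Sigma_{\chi_R v},\Sigma]\to E_{\rel}^*[\Sigma_v,\Sigma]$ by \cref{finiteness}. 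One first shrinks $\eps$ further so that $\norm{\chi_R v}_{C^{2,\alpha}}\le C\norm{v}_{C^{2,\alpha}}$ stays below the threshold of \cref{motivating-computation} uniformly in $R$ (possible since the transition region of $\chi_R$ has fixed width). Passing to the limit preserves the inequality.

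The ``specifically'' clause is then immediate: if $E_{\rel}^*[\Sigma_v,\Sigma]\le 0$, the first inequality forces $C_1\int_\Sigma\abs{\nabla_\Sigma v}^2\wt\le C_2\int_\Sigma v^2\wt$, i.e.\ $\int_\Sigma v^2\wt\ge (C_1/C_2)\int_\Sigma\abs{\nabla_\Sigma v}^2\wt$. There is no serious obstacle in this argument: the only nontrivial point is the cutoff-and-limit step needed to upgrade \cref{motivating-computation} from compactly supported $v$ to $v\in W^1\cap C^{2,\alpha}(\Sigma)$, and that step is already essentially carried out in the proof of \cref{finiteness}.
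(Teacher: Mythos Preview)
Your proof is correct and follows essentially the same approach as the paper: both read the inequality off the second-order expansion from \cref{motivating-computation}, using that $\abs{A_\Sigma}$ is globally bounded and that the cubic remainder is controlled by $C\eps\norm{v}_{W^1}^2$. You are in fact more careful than the paper in spelling out the cutoff-and-limit step needed to pass from compactly supported $v$ (the hypothesis of \cref{motivating-computation}) to general $v \in W^1 \cap C^{2,\alpha}(\Sigma)$; the paper's proof leaves this implicit.
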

\begin{proof}
	This follows from the expansion \cref{expansion-1}, using the fact that $\abs{A_\Sigma}$ is bounded on $\Sigma$ and the bound
	\begin{align*}
		\int_{\Sigma} M(v,\nabla_\Sigma v) \wt \ge -C\eps \norm{v}_{W}^2 - C\eps\norm{\nabla_\Sigma v}_W^2. & \qedhere
	\end{align*}
\end{proof}
\par

Since self-expanders are only formally critical points of \cref{expander-functional}, we will treat the RMCF equation \cref{rescaled-mcf-equation} as a gradient flow of the relative expander entropy when the background self-expander $\Sigma$ is fixed and the flow can be written as an entire graph over $\Sigma$.  To prove \cref{main-uniqueness-theorem}, we will adapt the spectral analysis of Choi--Mantoulidis \cite{ChoiMantoulidis} to the self-expander setting. Since $\Sigma$ is not compact, we will use $E_{\rel}^*$ to give the equation \cref{rescaled-mcf-equation} a gradient flow structure. \par

Let $\Sigma$ be an unstable self-expander asymptotic to some smooth cone $\mathcal{C}$. Let $I = \mathrm{ind}(\Sigma)$ and again fix an orthonormal basis $\{\phi_i\}_{i=1}^\infty \subset W^2(\Sigma)$ as before. Let $v(p,s): \Sigma \times (-\infty,0] \to \R$ be a solution to the nonlinear problem \cref{nonlinear-problem}. \cref{main-uniqueness-theorem} is a consequence of the following spectral uniqueness theorem. Recall that we are using the notation $\Pi_-$ as in \cref{existence-section}. 
\begin{thm}
\label{strong-uniqueness}
	There exists $\eps = \eps(\Sigma) > 0$ such that the following hold: suppose $v$ solves \cref{nonlinear-problem} and further satisfies
	\begin{align}
	\label{smooth-convergence}
		\norm{v}_{C^{1,\alpha}_P} \le 1 \text{, } \norm{v(\cdot,s)}_{C^{2,\alpha}} < \eps \text{ for all } s < 0,
	\end{align}
	and
	\begin{align}
	\label{negative-dominant}
		\norm{v(\cdot,s)}_{W} \le C \norm{\Pi_- v(\cdot ,s)}_{W}
	\end{align}
	then, up to a time translation, $v$ agrees with $\Phi(a)$ for some $a = (a_1,\ldots,a_I)$ from \cref{ancient-solution-existence}.
\end{thm}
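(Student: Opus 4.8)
The plan is to show that, up to a time translation, $v$ belongs to the class of solutions for which \cref{ancient-solution-existence} already asserts uniqueness. Decompose $v(\cdot,s) = v_-(\cdot,s) + v_0(\cdot,s) + v_+(\cdot,s)$ along the negative, zero, and positive eigenspaces of $-L_\Sigma$, write $v_i(s) = \inner{v(\cdot,s),\phi_i}$, and set $a_i = v_i(0)$ for $i = 1,\ldots,I$, so that $\abs{a}^2 = \sum_{i=1}^I a_i^2 = \norm{\Pi_- v(\cdot,0)}_W^2$. The key point is that \cref{negative-dominant} forces $v$ to decay to $0$ as $s \to -\infty$, exponentially and with size \emph{linear} in $\abs{a}$; feeding this back into \cref{nonlinear-problem} then produces the \emph{quadratic} closeness estimate \cref{closeness-estimate}, after which \cref{ancient-solution-existence} closes the argument. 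All weighted norms below are finite because $v$ inherits the rapid spatial decay of the eigenfunctions of $-L_\Sigma$ (cf.\ \cref{eigenfunction-decay}).

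First I would establish a crude decay $\norm{v(\cdot,s)}_W \le Ce^{\delta s}$ for some $\delta > 0$ and all $s \le 0$. Differentiating $\norm{v_-(\cdot,s)}_W^2$ along \cref{nonlinear-problem}, using $L_\Sigma\phi_i = -\lambda_i\phi_i$, the structure of the nonlinearity from \cref{nonlinear-error-estimate}, the smallness in \cref{smooth-convergence}, and \cref{negative-dominant} to absorb the contributions of $v_0$, $v_+$ and the gradient terms, one gets $\frac{d}{ds}\norm{v_-(\cdot,s)}_W^2 \ge \delta'\norm{v_-(\cdot,s)}_W^2$ for some $\delta' > 0$ once $\eps$ is small; since $\norm{v_-(\cdot,0)}_W$ is bounded, integrating shows $e^{-\delta's}\norm{v_-(\cdot,s)}_W^2$ is nondecreasing, hence $\norm{v_-(\cdot,s)}_W \le Ce^{\delta's/2}$, and then $\norm{v(\cdot,s)}_W \le Ce^{\delta s}$ by \cref{negative-dominant}. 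Next, the crude decay guarantees $v_i(s) \to 0$ as $s \to -\infty$ for every $i$, so $v$ is represented mode by mode through the Duhamel formulas of \cref{existence-linear-problem} with inhomogeneity $Q_i(\sigma) = \inner{Q(\cdot,\sigma),\phi_i}$. A bootstrap that alternates these Duhamel formulas with the parabolic Schauder estimates \cref{main-schauder-estimates} — using at each step that $Q$ is quadratic with the $\abs{\xX}$-weighted coefficient bounds of \cref{nonlinear-error-estimate}, so that $Q$ decays roughly twice as fast as $v$ — then promotes the crude decay to $\norm{v(\cdot,s)}_{C^{2,\alpha}_P(\Sigma\times[s-1,s])} \le C\abs{a}e^{\delta_0 s}$ for any prescribed $\delta_0 \in (0,-\lambda_I)$, the constant being \emph{linear} in $\abs{a} = \norm{\Pi_- v(\cdot,0)}_W$.

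Granting this, $w = v - \tau_-(a_1,\ldots,a_I)$ solves \cref{linear-problem} with $\Pi_- w(\cdot,0) = 0$ and right-hand side $h = Q(v,\xX\cdot\nabla_\Sigma v,\nabla_\Sigma v,\nabla_\Sigma^2 v)$, hence by uniqueness is the solution produced in \cref{existence-linear-problem}. Since $Q$ is quadratic, the sharp decay forces $\sup_{\sigma<0} e^{-2\delta_0\sigma}\norm{h}_{C^{0,\alpha}_P(\Sigma\times[\sigma-1,\sigma])} \le C\abs{a}^2$, so \cref{main-schauder-estimates} (with decay exponent $2\delta_0$ and output exponent $\delta_0$) gives $e^{-\delta_0 s}\norm{v - \tau_-(a_1,\ldots,a_I)}_{C^{2,\alpha}_P(\Sigma\times[s-1,s])} \le C_*\abs{a}^2$ for all $s \le 0$, with $C_* = C_*(\Sigma,\delta_0)$. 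This is exactly \cref{closeness-estimate} with $\beta = C_*$. After shrinking $\eps$ — and, if necessary, a backward time translation $v(\cdot,\cdot)\mapsto v(\cdot,\cdot+T)$ with $T \ll 0$, which only decreases the size of the negative-mode data and leaves $C_*$ unchanged — we may assume $\abs{a}$ lies below the admissibility threshold $\eps(\Sigma,\delta_0,C_*)$ of \cref{ancient-solution-existence}; then $v$ meets both hypotheses ($\Pi_- v(\cdot,0) = \sum_{i=1}^I a_i\phi_i$ and \cref{closeness-estimate}) of its uniqueness clause, so $v = \Phi(a)$, as claimed. I expect the main obstacle to be the bootstrap in the second paragraph: because $Q$ genuinely involves $\nabla_\Sigma v$ and $\nabla_\Sigma^2 v$, the absorption cannot be closed at the level of $W$-norms alone, so one must interleave the Duhamel estimates with \cref{main-schauder-estimates}, exploit the $\abs{\xX}$-weighted structure of $Q$ in \cref{nonlinear-error-estimate} (in particular to tame the $\xX\cdot\nabla_\Sigma v$ term), and keep all constants dependent only on $\Sigma$ and $\delta_0$, so that the resulting bound is genuinely quadratic in $\abs{a}$; the remaining steps, including the time-translation normalization, are routine.
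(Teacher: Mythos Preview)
Your overall architecture is right and matches the paper's: obtain exponential decay of $v$ as $s\to-\infty$, feed the quadratic structure of $Q$ back in to produce the closeness estimate \cref{closeness-estimate}, and then invoke the uniqueness clause of \cref{ancient-solution-existence}. The final paragraph of your argument (applying \cref{main-schauder-estimates} to $w=v-\tau_-(a)$ with $h=Q$) is essentially how the paper finishes as well.

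The substantive difference, and the gap, is in how the decay is obtained. Your crude step differentiates $\norm{v_-}_W^2$ and tries to absorb the error $\norm{Q}_W$ using only \cref{negative-dominant} and \cref{nonlinear-error-estimate}. But the structure of $Q$ forces $\norm{Q}_W\le C\delta(s)\norm{v}_{W^1}$, and \cref{negative-dominant} controls only $\norm{v}_W$, not $\norm{\nabla_\Sigma v}_W$; your proposed ``interleave Duhamel with Schauder'' does not obviously close this, because \cref{main-schauder-estimates} presupposes $C^{0,\alpha}_P$-decay of the inhomogeneity, which is exactly what you are trying to produce. The paper closes this loop by a device you do not mention: the gradient-flow structure of the relative expander entropy $E_{\rel}^*$ gives $E_{\rel}^*[\Sigma_{v(\cdot,s)},\Sigma]\le 0$, and then the reverse Poincar\'e inequality (\cref{reverse-poincare}) yields $\norm{\nabla_\Sigma v}_W\le C\norm{v}_W$, so that in fact $\norm{(\partial_s-L_\Sigma)v}_W\le C\delta(s)\norm{v}_W$. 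With this in hand the paper does not use your direct ODE on $\norm{v_-}_W^2$; instead it derives the trichotomy system for $V_{>\mu},V_{=\mu},V_{<\mu}$ and applies the Merle--Zaag lemma (\cref{ode-lemma}) at each $\mu\in\{\lambda_1,\dots,\lambda_I,0\}$, iterating down through the negative eigenvalues to locate the sharp rate $\lambda_{I^*}$ (the largest index for which $e^{\lambda_i s}\inner{v,\phi_i}\not\to 0$). The closeness estimate is then established for the time-translated solution $v^{(\sigma)}=v(\cdot,\cdot-\sigma)$ with $\sigma$ large, which is where the sharp rate enters: it guarantees $\limsup_{\sigma\to\infty}e^{-\lambda_{I^*}\sigma}\norm{\Pi_-v^{(\sigma)}(\cdot,0)}_W>0$, so the data after translation is genuinely of the size needed for \cref{closeness-estimate} to be nonvacuous.

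In short: your endgame is fine, but the paper's proof relies crucially on the entropy-based reverse Poincar\'e inequality to tame the gradient terms in $Q$, and on the Merle--Zaag ODE lemma to organize the decay analysis; absent the former, your crude-decay step does not close as written.
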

Given a solution $v$ to \cref{nonlinear-problem}, we also introduce the following extra notations for projections throughout the section: Given $\mu \in \R$,
\begin{align*}
	\Pi_{\sim \mu} v = \sum_{\lambda_i \sim \mu} \inner{v, \phi_i} \phi_i  \text{ and } V_{\sim \mu}(s) = \norm{\Pi_{\sim \mu}v(\cdot,s)}_{W}. 
\end{align*}
Here $\sim$ refers to any of $>, <$ or $=$. When $\mu = 0$, we will instead use $V_+$, $V_-$ and $V_0$ instead of $V_{> 0}$, $V_{<0}$ and $V_{=0}$. Let also 
\begin{align*}
	V(s) = \norm{v(\cdot,s)}_{W} \text{ and } \delta(s) = \norm{v(\cdot,s)}_{C^{2,\alpha}}.
\end{align*}
From now on we assume that $v$ satisfies the assumptions of \cref{strong-uniqueness}, i.e. \cref{smooth-convergence} and \cref{negative-dominant}. Using the asymptotic expansion of $Q$, \cref{asymptotic-expansion-q}, we see that, up to a time translation, on $(-\infty,0)$,
\begin{align}
	\label{a-1}
	\norm{(\frac{\partial}{\partial s} - L_\Sigma)v(\cdot,s)}_{W} \le C\delta(s)\norm{v(\cdot,s)}_{W^1}
\end{align}
The gradient flow structure of the relative expander entropy implies that $E_{\rel}^*[\Sigma_v,\Sigma] \le 0$. Moreover, by the reverse Poincar\'{e} inequality, \cref{reverse-poincare},
\begin{align*}
	0 \ge E_{\rel}^*[\Sigma_v,\Sigma] \ge C_1 \int_\Sigma \abs{\nabla_\Sigma v}^2 \wt - C_2 \int_{\Sigma} \abs{v}^2 \wt,
\end{align*}
so that \cref{a-1} implies 
\begin{align*}
	\norm{(\frac{\partial}{\partial s} - L_\Sigma)v(\cdot,s)}_{W} \le C\delta(s)\norm{v(\cdot,s)}_{W}.
\end{align*}
Knowing this, we can apply the projection operator $\Pi_{\sim \mu}$ to \cref{nonlinear-problem} and obtain, following \cite{ADS}, for each $\mu \in \{\lambda_1,\ldots,\lambda_I\} \cup \{0\}$, the system: 
\begin{gather}
	\frac{d}{ds}V_{>\mu}  + \bar{\mu} V_{> \mu} \le \delta(s) V,  \label{plus-mode}\\
	\abs{\frac{d}{ds} V_{=\mu} + \mu V_{=\mu}} \le C\delta(s)V,  \label{neutral-mode}\\
	\frac{d}{ds}V_{<\mu}+ \underbar{$\mu$}V_{<\mu}  \ge -\delta(s)V. \label{minus-mode}
\end{gather}
Here $\bar{\mu}$ is the smallest eigenvalue above $\mu$ and \underbar{$\mu$} is the largest eigenvalue below $\mu$. \par 
When $\eps$ is sufficiently small and $v$ not the trivial solution, we may apply \cref{ode-lemma} after multiplying $e^{\mu s}$ to the system to obtain that 
\begin{align*}
	V_{>\mu}(s) \le C\delta(s)(V_{=\mu}(s) + V_{<\mu}(s)) \text{ for } s \in (-\infty,0],
\end{align*}
and that either there exists $s_0 \in (-\infty,0)$ such that $V_{<\mu}(s) \le C \delta(s)V_{=\mu}(s)$ on $(-\infty, s_0]$, or $V_{=\mu}(s) \le C\delta(s)V_{<\mu}(s)$ on $(-\infty,0]$. \cref{negative-dominant} means that the second case happens when $\mu = 0$. \par 
We claim that $\delta(s) \le C e^{-\lambda_{I} s}$. To see this, by \cref{minus-mode}, for every $\eps > 0$ there exists $s_\eps < 0$ such that
\begin{align*}
	\frac{d}{ds}\log V_{-} \ge -\lambda_{I} - C\delta(s) \ge -\lambda_{I} - \eps \text{ on } (-\infty, s_\eps)
\end{align*}
as $\delta(s) \to 0$ as $s \to -\infty$. Integrating this gives the pointwise bound 
\begin{align*}
	V_{-}(s) \le 	V_{-}(s_\eps) e^{(-\lambda_{I} -\eps) (s - s_\eps)},
\end{align*}
for $s < s_\eps$, which, together with the interior Schauder estimates \cite[Theorem C.2]{ChoiMantoulidis}, implies the pointwise decay (note that here it suffices to use the usual estimate for $L^2(\Sigma)$, as $W(\Sigma) \subset L^2(\Sigma)$)
\begin{align*}
	\delta(s) \le C_\eps e^{(-\lambda_{I} - \eps) s }.
\end{align*}
Going back to \cref{minus-mode} and multiplying both sides by $e^{\lambda_{I} s}$ implies that 
\begin{align*}
	\frac{d}{ds} \log(e^{\lambda_{I} s} V_{-}(s) ) \ge -C\delta(s) \ge C_{\eps} e^{(-\lambda_{I} - \eps)s}.
\end{align*}
Integrating this from $s$ to 0 gives that 
\begin{align*}
	\log(e^{\lambda_{I} s} V_{-}(s) )  \le C + V_{-}(0) \implies V_{-}(s) \le Ce^{-\lambda_{I}s}.
\end{align*}
Finally, interior Schauder estimates give that $\delta(s) \le C e^{-\lambda_{I}s}$ as desired. 

\begin{prop}
	Suppose $\mu \in \{\lambda_1,\ldots,\lambda_I\}$ is such that
	\begin{align}
		\label{decay-condition}
		\lambda_i \ge \mu \implies \lim_{s \to -\infty} e^{\lambda_i s} \inner{v(\cdot,s), \phi_i} \phi_i = 0.
	\end{align}
	Then $\mu \ne \lambda_1$ and $V_{\ge \mu}(s) \le C \delta(s) V_{<\mu}(s)$ for all $s \le 0$.
\end{prop}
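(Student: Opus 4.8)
The plan is to promote the Merle--Zaag dichotomy recorded just before the statement to its second alternative. Recall that \cref{ode-lemma} gives, for this $\mu$, the bound $V_{>\mu}(s) \le C\delta(s)(V_{=\mu}(s) + V_{<\mu}(s))$ on $(-\infty,0]$ together with the dichotomy: either (a) there is $s_0 < 0$ with $V_{<\mu}(s) \le C\delta(s)V_{=\mu}(s)$ on $(-\infty,s_0]$, or (b) $V_{=\mu}(s) \le C\delta(s)V_{<\mu}(s)$ on $(-\infty,0]$. If (b) holds, then $V_{>\mu} \le C\delta(V_{=\mu}+V_{<\mu}) \le C'\delta V_{<\mu}$ (using also that $\delta$ is bounded), and since $V_{\ge\mu}^2 = V_{>\mu}^2 + V_{=\mu}^2$ we obtain $V_{\ge\mu}(s) \le C\delta(s)V_{<\mu}(s)$ for all $s \le 0$ at once. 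So everything comes down to showing that \cref{decay-condition} is incompatible with (a).

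Suppose (a) holds; recall also that $v$ is not the trivial solution. On $(-\infty,s_0]$, combining (a) with the $V_{>\mu}$ bound gives $V(s) \le (1 + C\delta(s))V_{=\mu}(s) \le CV_{=\mu}(s)$, so the neutral-mode inequality \cref{neutral-mode} becomes $|\tfrac{d}{ds}V_{=\mu} + \mu V_{=\mu}| \le C\delta(s)V_{=\mu}$. Were $V_{=\mu}$ to vanish at a point of $(-\infty,s_0]$, Gr\"onwall's inequality would force $V_{=\mu}\equiv 0$ there, hence $V_{<\mu}\equiv V_{>\mu}\equiv 0$ and $v\equiv 0$ on $(-\infty,s_0]$; by forward uniqueness for \cref{nonlinear-problem} this makes $v$ trivial, a contradiction. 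Thus $V_{=\mu}(s) > 0$ on $(-\infty,s_0]$ and $\bigl|\tfrac{d}{ds}\log\bigl(e^{\mu s}V_{=\mu}(s)\bigr)\bigr| \le C\delta(s)$. Since $\delta(s) \le Ce^{-\lambda_I s}$ with $-\lambda_I > 0$ we have $\int_{-\infty}^{s_0}\delta(s)\,ds < \infty$, so $e^{\mu s}V_{=\mu}(s)$ converges to some $L > 0$ as $s\to-\infty$.

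Now project \cref{nonlinear-problem} onto each $\phi_i$ with $\lambda_i = \mu$: using $L_\Sigma\phi_i = -\lambda_i\phi_i$ and the estimate $\norm{(\tfrac{\partial}{\partial s}-L_\Sigma)v(\cdot,s)}_W \le C\delta(s)V(s)$ established before the statement (which itself rests on $E_{\rel}^*[\Sigma_v,\Sigma]\le 0$ and the reverse Poincar\'e inequality \cref{reverse-poincare}), one obtains $\tfrac{d}{ds}\bigl(e^{\mu s}\inner{v(\cdot,s),\phi_i}\bigr) = e^{\mu s}\inner{Q(\cdot,s),\phi_i}$ with $\bigl|e^{\mu s}\inner{Q(\cdot,s),\phi_i}\bigr| \le C\delta(s)\,e^{\mu s}V_{=\mu}(s) \le C'\delta(s)$, an integrable function near $-\infty$. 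Hence $e^{\mu s}\inner{v(\cdot,s),\phi_i}\to\ell_i$ for each such $i$, and $\sum_{\lambda_i=\mu}\ell_i^2 = \lim_{s\to-\infty}\bigl(e^{\mu s}V_{=\mu}(s)\bigr)^2 = L^2 > 0$. So $\ell_i\ne 0$ for some $i$ with $\lambda_i = \mu \ge \mu$, i.e.\ $e^{\lambda_i s}\inner{v(\cdot,s),\phi_i}\phi_i$ does not converge to $0$, contradicting \cref{decay-condition}. This rules out (a), so (b) holds and the estimate follows.

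Finally, $\mu\ne\lambda_1$: if $\mu = \lambda_1$ then $V_{<\mu}\equiv 0$ (no eigenvalue lies below $\lambda_1$), so alternative (b) forces $V_{=\mu}\equiv 0$, then $V_{>\mu}\equiv 0$, and hence $v\equiv 0$ --- impossible. The main obstacle is the exclusion of alternative (a): one must upgrade the crude comparison $V\le CV_{=\mu}$ into the genuine asymptotics $e^{\mu s}V_{=\mu}(s)\to L>0$ and then into convergence of the individual Fourier coefficients at level $\mu$, and it is precisely here that the hypothesis \cref{decay-condition} enters.
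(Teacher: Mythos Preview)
Your argument is correct and rests on the same mechanism as the paper's proof: rule out alternative (a) of the Merle--Zaag dichotomy by showing it forces $e^{\mu s}V_{=\mu}(s)$ to converge to a strictly positive limit, which clashes with the decay hypothesis \cref{decay-condition}. The organization differs slightly. The paper proceeds by induction down the eigenvalue ladder starting at $\mu=\lambda_I$, invoking \cref{negative-dominant} to bound $V$ by $V_-$ and then by $V_{=\mu}$ at each step; you instead work directly at the given level $\mu$ and control $V_{>\mu}$ using the first conclusion of the ODE lemma itself, which lets you skip the induction entirely. Both routes lead to the same differential inequality for $\log\bigl(e^{\mu s}V_{=\mu}(s)\bigr)$ and the same contradiction. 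One small redundancy: once you have $e^{\mu s}V_{=\mu}(s)\to L>0$, there is no need to pass to the individual Fourier coefficients, since \cref{decay-condition} already gives $e^{2\mu s}V_{=\mu}(s)^2=\sum_{\lambda_i=\mu}\bigl|e^{\mu s}\inner{v(\cdot,s),\phi_i}\bigr|^2\to 0$ directly (the sum is finite), yielding $L=0$.
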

\begin{proof}
	We proceed by induction. Suppose $\mu = \lambda_I$, and suppose for a contradiction that there is $s_0$ such that $V_{<\lambda_I}(s) \le C\delta(s)V_{=\lambda_I}(s)$ on $(-\infty,s_0]$. Multiplying \cref{neutral-mode} by $e^{\lambda_I s}$ gives that
	\begin{align*}
		|\frac{d}{ds} (e^{\lambda_I s} V_{=\lambda_I})| \le C e^{\lambda_I s} \delta(s) V(s) \le C e^{\lambda_I s} \delta(s) V_-(s) \le Ce^{\lambda_I s} \delta(s) V_{=\lambda_I}(s),
	\end{align*}
	where we used \cref{negative-dominant}. This implies that 
	\begin{align*}
		\abs{\frac{d}{ds} \log (e^{\lambda_I s} V_{=\lambda_I}(s))} \le C\delta(s) \text{ on } (-\infty,s_0]
	\end{align*}
	As $\delta(s) \le Ce^{-\lambda_I s}$, we see that this gives a contradiction upon integrating. Thus the alternative must hold, i.e. 
	\begin{align*}
		V_{=\lambda_I}(s) \le C\delta(s) V_{<\lambda_I}(s)  \text{ on } (-\infty,0].
	\end{align*}
	In particular this means that $\mu \ne \lambda_1$. In general, suppose the proposition is true for $\mu = \lambda_J$, then for $\mu = \lambda_{J'}$ the largest eigenvalue below $\lambda_J$, we can repeat the above argument using the fact that 
	\begin{align*}
		V_-^2 = V_{0 > \mu > \lambda_J'}^2 + V_{= \lambda_J'}^2 + V_{< \lambda_J'}^2,
	\end{align*}
	and the proof follows verbatim until $\lambda_{J'} = \lambda_1$.
\end{proof}
Let $I^*$ be the largest index for which \cref{decay-condition} fails, then it follows, from \cref{minus-mode} applied to the smallest eigenvalue above $I^*$, that
\begin{align}
	\label{improved-minus-mode}
	\frac{d}{ds} V_{\le \lambda_{I^*}} + \lambda_{I^*} V_{\le \lambda_{I^*}} \ge -C\delta(s)V_{\le \lambda_{I^*}}.
\end{align}
Using \cref{improved-minus-mode} in place of \cref{minus-mode}, a similar argument as above shows that  $\delta(s) \le C e^{-\lambda_{I^*} s}$, which is the sharp asymptotic decay in time for the solution. \par 
To finish the proof, we seek to apply the uniqueness aspect of \cref{ancient-solution-existence}. It suffices to establish \cref{closeness-estimate}. For $\sigma > 0$, let $v^{(\sigma)}(p,s) = v(p,s- \sigma)$ be the translated solution. Then by definition of $I^*$ we have 
\begin{align*}
	\limsup_{\sigma \to \infty} e^{-\lambda_{I^*} \sigma} \norm{\Pi_{-} v^{(\sigma)}(\cdot,0)}_{W} > 0.
\end{align*}
\begin{prop}
	\label{prop-b-5}
	Given $\sigma \ge 0$, then it holds for $s \le 0$,
	\begin{align*}
		e^{-2\lambda_{I^*} \sigma} ||v^{(\sigma)}(\cdot, s) - \sum_{i \le I} e^{-\lambda_i (s - \sigma)} \inner{v(\cdot, s - \sigma), \phi_i} \phi_i||_{W} \le C e^{-\lambda_I s}.
	\end{align*}
\end{prop}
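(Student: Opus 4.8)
The plan is to view the difference $w^{(\sigma)}:=v^{(\sigma)}-\bar v^{(\sigma)}$ as a solution of an inhomogeneous \emph{linear} equation and to read the estimate off the explicit Duhamel representation of \cref{existence-linear-problem}. Here $\bar v^{(\sigma)}$ is the linear comparison function on the left-hand side: with $a^{(\sigma)}_i:=\inner{v^{(\sigma)}(\cdot,0),\phi_i}$ for $i\le I$, it equals $\tau_-(a^{(\sigma)})$, solves $\partial_s\bar v^{(\sigma)}=L_\Sigma\bar v^{(\sigma)}$, and satisfies $\Pi_-\bar v^{(\sigma)}(\cdot,0)=\Pi_-v^{(\sigma)}(\cdot,0)$. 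Since $v^{(\sigma)}$ is a time translate of $v$ it again solves \cref{nonlinear-problem}, so $w^{(\sigma)}$ solves \cref{linear-problem} with forcing $h=Q^{(\sigma)}:=Q(v^{(\sigma)},\xX\cdot\nabla_\Sigma v^{(\sigma)},\nabla_\Sigma v^{(\sigma)},\nabla^2_\Sigma v^{(\sigma)})$ and with $\Pi_-w^{(\sigma)}(\cdot,0)=0$; once the relevant decay is checked, the representation of \cref{existence-linear-problem} applies to it.

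First I would record the forcing estimate. From \cref{nonlinear-problem} and \eqref{a-1}, $\norm{Q^{(\sigma)}(\cdot,\tau)}_W\le C\,\delta(\tau-\sigma)\,\norm{v^{(\sigma)}(\cdot,\tau)}_{W^1}$, and since the gradient-flow structure gives $E_{\rel}^*[\Sigma_{v^{(\sigma)}(\cdot,\tau)},\Sigma]\le 0$, the reverse Poincar\'e inequality \cref{reverse-poincare} gives $\norm{v^{(\sigma)}(\cdot,\tau)}_{W^1}\le C\norm{v(\cdot,\tau-\sigma)}_W$. Feeding in the sharp decay $\delta(r)+\norm{v(\cdot,r)}_W\le Ce^{-\lambda_{I^*}r}$ established above yields $\norm{Q^{(\sigma)}(\cdot,\tau)}_W\le Ce^{-2\lambda_{I^*}(\tau-\sigma)}=Ce^{2\lambda_{I^*}\sigma}e^{-2\lambda_{I^*}\tau}$. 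The prefactor $e^{2\lambda_{I^*}\sigma}$ is exactly what $e^{-2\lambda_{I^*}\sigma}$ in the statement will absorb, and since $0<-\lambda_I<-2\lambda_{I^*}$ (because $\lambda_I-2\lambda_{I^*}\ge-\lambda_{I^*}>0$) this $\tau$-decay is fast enough to verify \cref{inhomogeneous-decay} and \cref{linear-problem-decay} for admissible exponents, legitimizing the use of \cref{existence-linear-problem}.

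Then I would plug the bound on $Q^{(\sigma)}$ into the Duhamel formula. For each of the $I$ unstable modes, $|\inner{w^{(\sigma)}(\cdot,s),\phi_i}|\le\int_s^0 e^{\lambda_i(\tau-s)}\norm{Q^{(\sigma)}(\cdot,\tau)}_W\,d\tau\le Ce^{2\lambda_{I^*}\sigma}e^{-\lambda_i s}\int_s^0 e^{(\lambda_i-2\lambda_{I^*})\tau}\,d\tau$; for the zero and positive part I would use instead $\Pi_{\ge0}w^{(\sigma)}(\cdot,s)=\int_{-\infty}^s e^{(s-\tau)L_\Sigma}\Pi_{\ge0}Q^{(\sigma)}(\cdot,\tau)\,d\tau$ together with $\norm{e^{(s-\tau)L_\Sigma}\Pi_0}\le 1$ and $\norm{e^{(s-\tau)L_\Sigma}\Pi_{>0}}\le e^{-\nu_+(s-\tau)}$, $\nu_+$ the least positive eigenvalue. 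Evaluating these elementary integrals and using $\lambda_I-2\lambda_{I^*}>0$ together with $e^{-\lambda_i s},e^{-2\lambda_{I^*}s}\le e^{-\lambda_I s}$ for $s\le0$ and $i\le I$, every piece is bounded by $Ce^{2\lambda_{I^*}\sigma}e^{-\lambda_I s}$; summing the finitely many unstable and zero modes with the positive part gives $\norm{w^{(\sigma)}(\cdot,s)}_W\le Ce^{2\lambda_{I^*}\sigma}e^{-\lambda_I s}$, which is the claim.

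The integrations are routine; the genuine point of care is that there are infinitely many positive eigenvalues, so the $\Pi_{>0}$-part must be estimated via $L^2$-contractivity of the heat semigroup and not mode by mode (a termwise bound would leave $\sum_{\lambda_i>0}(\lambda_i-2\lambda_{I^*})^{-2}$, which need not converge in high dimension). A borderline resonance $\lambda_i=2\lambda_{I^*}$ for an unstable mode merely inserts a factor $|s|$, harmlessly absorbed because $\lambda_I-2\lambda_{I^*}>0$ strictly (one always has $\lambda_I\ne2\lambda_{I^*}$) and $|s|\,e^{(\lambda_I-2\lambda_{I^*})s}$ is bounded on $s\le0$. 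Finally, as elsewhere in this section, interior parabolic Schauder estimates promote this $W$-bound to the $C^{2,\alpha}_P$-estimate \cref{closeness-estimate}; together with $a^{(\sigma)}\in B^I_\eps(0)$ for $\sigma$ large (which holds since $\delta(-\sigma)\to0$), this lets the uniqueness half of \cref{ancient-solution-existence} identify $v^{(\sigma)}$, and hence $v$ up to time translation, with one of the constructed tame ancient flows.
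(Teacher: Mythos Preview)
Your proof is correct and close in spirit to the paper's, but the route differs in one place worth noting. For the nonnegative projection the paper does \emph{not} pass through a Duhamel representation at all: since $\bar v^{(\sigma)}=\tau_-(a^{(\sigma)})$ lies entirely in the span of unstable eigenfunctions, one has $\Pi_{\ge 0}w^{(\sigma)}(\cdot,s)=\Pi_{\ge 0}v^{(\sigma)}(\cdot,s)$, so its $W$-norm is simply $(V_0+V_+)(s-\sigma)$. The dominance of the negative mode together with the sharp decay $\delta(r),V(r)\le Ce^{-\lambda_{I^*}r}$ then gives directly $(V_0+V_+)(s-\sigma)\le C\delta(s-\sigma)V(s-\sigma)\le Ce^{-2\lambda_{I^*}(s-\sigma)}$, which after multiplying by $e^{-2\lambda_{I^*}\sigma}$ is the desired bound. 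This avoids invoking \cref{existence-linear-problem} for the stable and neutral parts and, in particular, sidesteps the issue you carefully flag about summing over infinitely many positive modes. For the unstable modes the two arguments coincide: the paper integrates the scalar ODE $|u_i'+\lambda_i u_i|\le C\delta V$ from $s-\sigma$ to $-\sigma$, which is exactly your Duhamel integral $\int_s^0 e^{\lambda_i(\tau-s)}\langle Q^{(\sigma)},\phi_i\rangle\,d\tau$ after the shift $\tau\mapsto\tau-\sigma$, and bounds the result by $C|s|e^{-2\lambda_{I^*}s}\le Ce^{-\lambda_I s}$ (your case analysis on the sign of $\lambda_i-2\lambda_{I^*}$ and the resonance remark give the same thing more precisely). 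Your version has the advantage of being a clean single application of the linear theory already set up in \cref{existence-linear-problem}; the paper's is slightly shorter because the $\Pi_{\ge 0}$ estimate is immediate from the spectral dominance already in hand.
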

\begin{proof}
	As $\delta(s) \le Ce^{-\lambda_{I^*} s}$ and the negative mode is dominant, we have 
	\begin{align*}
		e^{-2\lambda_{I^*}\sigma} (V_0 + V_+) (s - \sigma) \le C e^{-2\lambda_{I^*}s} \le C e^{-\lambda_I s}. 
	\end{align*}
	On the other hand, \cref{minus-mode} with $\mu = 0$ implies that for every $1 \le i \le I$,
	\begin{align*}
		||\frac{d}{ds} u_i+ \lambda_i u_i||_{W} \le C\delta(s) V_{\le I^*},
	\end{align*}
	where $u_i = e^{-\lambda_i (s-\sigma)} \inner{v(\cdot,s-\sigma), \phi_i}\phi_i$. Multiplying this equation by $e^{\lambda_i s}$ and integrating from $s-\sigma$ to $-\sigma$ yields, 
	\begin{align*}
		\norm{e^{\lambda_i (s-\sigma)}u_i(\cdot, s-\sigma) - e^{-\lambda_i \sigma} u_i(-\sigma)}_{W} \le C \int_{s-\sigma}^{-\sigma} e^{\lambda_i \rho} e^{-2\lambda_{I^*} \rho} d\rho.
	\end{align*}
	Thus 
	\begin{align*}
		&\phantom{{}\le{}} e^{-2\lambda_{I^*} \sigma} \norm{u_i(\cdot, s-\sigma) - e^{-\lambda_i s} u_i(-\sigma)}_{W} \\
		&\le Ce^{-2\lambda_{I^*}\sigma} \int_{s-\sigma}^{-\sigma} e^{-\lambda_i (s-\sigma - \rho)} e^{-2\lambda_{I^*} \rho} d\rho \\
		&\le C\abs{s} e^{-2\lambda_{I^*} s} \le C e^{-\lambda_I s},
	\end{align*}
	provided $\sigma$ is sufficiently large. 
\end{proof}
Now let $0 < \delta < -\lambda_I$. Using interior Schauder estimates \cite[Theorem C.2]{ChoiMantoulidis} and \cref{main-schauder-estimates} together with \cref{prop-b-5}, we see that, for every $s \le 0$,
\begin{align*}
	e^{(\lambda_I+\delta) s} ||v^{(\sigma)}(\cdot, s) - \sum_{i \le I} e^{-\lambda_i (s - \sigma)} \inner{v(\cdot, s - \sigma), \phi_i} \phi_i||_{C^{2,\alpha}_P(\Sigma \times [s-1,s])} \le Ce^{2\lambda_{I^*} \sigma}.
\end{align*}
\cref{strong-uniqueness} now follows from \cref{ancient-solution-existence} after choosing $\beta$ sufficiently large depending on $C$ and then $\sigma$ sufficiently large so that $\abs{\inner{v(\cdot,-\sigma),\phi_i}} < \eta$, where $\eta = \eta(\beta)$. \par 
It is now simple to deduce \cref{main-uniqueness-theorem}. Recall from \cref{backwards-convergence} that, up to a time translation, we may assume that $\tilde{\mathcal{M}}$ can be written as a normal graph of $v$ over $\Sigma$ on $(-\infty,0)$ such that
\begin{align*}
	\norm{v}_{C^{2,\alpha}_P(\Sigma \times (-\infty,0])} < \eps. 
\end{align*}
This gives \cref{smooth-convergence}. As $\mathcal{C}$ is generic, $\Sigma$ has no nontrivial Jacobi field, and therefore \cref{negative-dominant} holds (if the neutral or the positive modes were to be dominant, the solution must be static). The rest follows from \cref{strong-uniqueness}. \par 
We now discuss briefly the converse question; namely when is a tame ancient RMCF a Morse flow line. In general, one cannot extend an ancient RMCF to an eternal one. In $\R^3$, using the classification of low entropy self-shrinkers of Bernstein and Wang \cite{BWTopology}, we have the following strong converse to \cref{ancient-solution-existence} in the low entropy setting. Recall that the entropy of a hypersurface $\Sigma$, as defined in Colding--Minicozzi \cite{CMGeneric}, is given by
\begin{align*}
	\lambda[\Sigma] = \sup_{x_0 \in \R^{n+1}, t > 0} (4\pi t)^{-\frac{n}{2}} \int_\Sigma e^{\frac{\abs{\xX(p) - x_0}^2}{4t}} d\mathcal{H}^n.
\end{align*}
\begin{cor}
	\label{low-entropy-flow-lines}
	Suppose $\mathcal{C} \subset \R^3$ is a cone with $\lambda[\mathcal{C}] < \lambda[\mathbb{S} \times \mathbb{R}]$. Then any tame ancient RMCF is a Morse flow line. In particular, there exist an $I$-parameter family of Morse flow lines coming out of an index $I$ self-expander asymptotic to $\mathcal{C}$.
\end{cor}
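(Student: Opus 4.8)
The plan is to view a tame ancient RMCF as a mean curvature flow coming out of $\mathcal{C}$, extend it to all of $(-\infty,\infty)$, and identify its forward limit. Let $\tilde{\mathcal{M}}=\{\tilde\Sigma_s\}_{s\in(-\infty,0]}$ be a tame ancient RMCF with $\lim_{s\to-\infty}\tilde\Sigma_s=\Sigma$, a smooth self-expander asymptotic to $\mathcal{C}$. By \cref{change-of-variable} it corresponds to a smooth MCF $\mathcal{M}=\{\Sigma_t\}_{t\in(0,1]}$ coming out of $\mathcal{C}$, and since the Colding--Minicozzi entropy is monotone along MCF with $\mathcal{C}$ as the initial time-slice, $\lambda[\Sigma_t]=\lambda[\tilde\Sigma_{\log t}]\le\lambda[\mathcal{C}]<\lambda[\mathbb{S}\times\mathbb{R}]$ for all $t$.

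First I would show that $\mathcal{M}$ extends smoothly to $(0,\infty)$. By short-time existence the RMCF continues smoothly past $s=0$, and the ends stay asymptotic to $\mathcal{C}$ by pseudolocality as in \cref{properties-of-the-flow}; any finite-time breakdown of the continuation is a finite-time singularity of the MCF, whose tangent flow is a multiplicity-one self-shrinker of entropy $<\lambda[\mathbb{S}\times\mathbb{R}]$. By the Bernstein--Wang classification of low-entropy self-shrinkers in $\mathbb{R}^3$ \cite{BWTopology}, such a shrinker is a hyperplane or a round sphere, so the tangent flow is a static multiplicity-one plane (forcing the point to be regular) or a shrinking round sphere. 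The latter is impossible: a round-sphere tangent flow produces, near the singular time, a closed component, whereas every component of $\mathcal{M}$ is non-compact -- $\Sigma$ has no closed component, since a closed self-expander would contradict the maximum principle at a point maximizing $\abs{\xX}$ (there $\hH$ and $\tfrac{1}{2}\xX^\perp=\tfrac{1}{2}\xX$ point oppositely), and a connected non-compact surface can only disconnect, or shed a closed piece, through a neckpinch (cylindrical) singularity, which the entropy bound excludes. Hence no singularity occurs and $\tilde{\mathcal{M}}$ extends to a smooth RMCF $\{\tilde\Sigma_s\}_{s\in\mathbb{R}}$ with $\lim_{s\to-\infty}\tilde\Sigma_s=\Sigma$.

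Next I would identify the forward limit. Since $n=2$, by \cite[Proposition 8.21]{CCMSGeneric} the flow $\mathcal{M}$ is trapped between two smooth asymptotically conical self-expanders asymptotic to $\mathcal{C}$ (here the low-entropy hypothesis ensures the relevant bounding expanders exist and are regular). Trapping makes the Bernstein--Wang relative entropy $E_{\rel}[\tilde\Sigma_s,\Gamma']$ finite, bounded below, and nonincreasing in $s$ for every self-expander $\Gamma'$ asymptotic to $\mathcal{C}$, by the gradient-flow structure of the RMCF (cf. \cite{BWRelativeEntropy}). Thus $E_{\rel}[\tilde\Sigma_s,\Gamma']$ converges as $s\to\infty$, the flow speed is square-integrable in $s$, and along some $s_i\to\infty$ the slices $\tilde\Sigma_{s_i}$ converge, smoothly on compacts and with uniform control at infinity from the trapping, to a self-expander $\Gamma$ asymptotic to $\mathcal{C}$ (necessarily smooth, again by the entropy bound and $n=2$). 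The \L ojasiewicz inequality of Park--Wang \cite{ParkWang} -- which needs no genericity, cf. the Remark after \cref{lojasiewicz-relative-entropy} -- together with the monotonicity of $E_{\rel}$ then upgrades this to full convergence $\tilde\Sigma_s\to\Gamma$ as $s\to\infty$, by the standard \L ojasiewicz--Simon trapping argument. Hence $\tilde{\mathcal{M}}$ connects $\Sigma$ to $\Gamma$ and is a (smooth) Morse flow line. The final assertion follows at once: by \cref{main-theorem} an index $I$ self-expander asymptotic to $\mathcal{C}$ carries an $I$-parameter family of distinct tame ancient RMCFs, and each is a Morse flow line by the above.

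The main obstacle is the trapping step: guaranteeing that the forward flow stays sandwiched between two \emph{smooth} asymptotically conical self-expanders. This is precisely where $n=2$ and $\lambda[\mathcal{C}]<\lambda[\mathbb{S}\times\mathbb{R}]$ are essential, and there is no obvious way around it -- without a global trapping the relative entropy cannot be defined along the whole flow and the forward-convergence step collapses. By comparison, the smoothness of the forward continuation and the \L ojasiewicz--Simon convergence are substantial but, given \cite{BWTopology} and \cite{ParkWang} respectively, essentially a matter of assembly.
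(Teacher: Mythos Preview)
Your proof is correct and, for the decisive step, follows the paper's route. The paper argues: Huisken's monotonicity bounds the entropy of any tangent flow below $\lambda[\mathbb{S}\times\mathbb{R}]$; by \cite[Corollary~1.2]{BWTopology} the only non-flat self-shrinker in that range is the round $\mathbb{S}^2$; but a tame ancient RMCF is asymptotically conical, so it cannot encounter a compact singularity at the first singular time; hence the flow remains smooth for all $s$. Your smoothness argument is the same in substance.

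Where you diverge is that the paper's proof stops at ``smooth for all time'' and does not spell out the forward limit at all, whereas you supply it explicitly via trapping (\cite[Proposition~8.21]{CCMSGeneric}), the relative-entropy monotonicity of \cite{BWRelativeEntropy}, and the Park--Wang \L ojasiewicz inequality to upgrade subsequential to full convergence. This is the right machinery, and you correctly observe that genericity of $\mathcal{C}$ is not needed for this step. In that sense your write-up is more complete than the paper's.

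Two minor points. First, your exclusion of the spherical singularity is slightly over-argued: the neckpinch clause is unnecessary, since before the \emph{first} singular time the flow is smooth and its connected components are in bijection with those of $\Sigma$, all of which are non-compact; a spherical tangent flow would force a closed component just before the singular time, which is already a contradiction. This is exactly what the paper means by ``asymptotically conical, so cannot encounter a compact singularity at the first singular time.'' Second, the trapping from \cite[Proposition~8.21]{CCMSGeneric} holds for $2\le n\le 6$ regardless of entropy, so the low-entropy hypothesis is not what drives that step; it is used only for the shrinker classification and (in your version) for the regularity of the forward limit $\Gamma$.
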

\begin{proof}
	By Huisken's monotonicity formula, any singularity of the flow must have entropy less than $\lambda[\mathbb{S} \times \R]$. By \cite[Corollary 1.2]{BWTopology}, it must be a round sphere $\mathbb{S}^2$. However, as any tame ancient RMCF is asymptotically conical (as $\Sigma$ is asymptotically conical), it cannot encounter a compact singularity at the first singular time. Thus, any such flow must remain smooth for all time. The second conclusion follows in view of \cref{main-theorem}.
\end{proof}
\begin{rem}
	We suspect that the entropy bound can be relaxed, with a suitable surgery procedure, to $\lambda[\mathcal{C}] < \lambda[\mathbb{S} \times \R] + \delta$. 
\end{rem}
In a different direction, if we know that the flow is expander mean convex at some time, it is also possible to extend an ancient RMCF to a Morse flow line. Recall that a hypersurface $\Sigma$ is expander mean convex if 
\begin{align*}
	\hH_\Sigma(p) + \frac{1}{2} \nN_\Sigma(p) \cdot \xX(p)  > 0.
\end{align*}
In view of \cref{nonlinear-problem}, a graphical RMCF over a self-expander $\Sigma$ is expander mean convex if and only if $v > 0$; that is, the RMCF lies on one side of $\Sigma$. An expander mean convex RMCF stays expander mean convex for all future time as long as it is smooth. In \cite{BCW}, we extended the notion of expander mean convexity past singularities. In particular, we showed that a smooth expander mean convex RMCF can be extended in some appropriate weak sense to stay expander mean convex in all future time, regardless of singularities, and that the extended flow is a rescaled Brakke flow. Moreover, the forward limit of such a flow is always a stable self-expander. This limit is unique due to expander mean convexity, and is smooth in low dimensions. \par 
Using a slight modification of the argument used in \cite{BCW}, we have the following partial converse to \cref{strong-uniqueness}.
\begin{prop}
	\label{existence-flow-line}
	Let $2 \le n \le 6$. If $v$ is an ancient solution constructed in \cref{ancient-solution-existence} and there is $s_0 < 0$ such that $v(\cdot, s_0) > 0$, then $v$ can be extended to a Morse flow line.  
\end{prop}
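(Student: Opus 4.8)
The strategy is to show that, from time $s_0$ on, the flow is expander mean convex, and then to invoke the weak expander mean convex RMCF constructed in \cite{BCW} in order to continue it past future singularities and to produce a forward limit. To this end I would first propagate the hypothesis $v(\cdot,s_0) > 0$ forward. Since $v$ is the smooth solution of the graphical RMCF equation \cref{nonlinear-problem} produced by \cref{ancient-solution-existence}, the hypersurface $\tilde{\Sigma}_{s_0} = \Sigma_{v(\cdot,s_0)}$ is expander mean convex exactly because $v(\cdot,s_0) > 0$. Expander mean convexity is preserved along a smooth RMCF (this is the parabolic maximum principle applied to the evolution equation for $H_{\tilde{\Sigma}_s} + \frac{1}{2}\xX\cdot\nN_{\tilde{\Sigma}_s}$; equivalently, one writes the quadratic term in \cref{nonlinear-problem} as $\vec{b}\cdot\nabla_\Sigma v + c\,v$ with coefficients bounded on $\Sigma\times[s_0,0]$, using \cref{asymptotic-expansion-q}, and applies the strong maximum principle to the resulting linear parabolic equation). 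Hence $v(\cdot,s) > 0$ for every $s\in[s_0,0]$, and the smooth ancient flow is expander mean convex at each such time; its backward limit is $\Sigma$ since $v(\cdot,s)\to 0$ in $C^{2,\alpha}$ as $s\to-\infty$ by \cref{closeness-estimate}.

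Next I would run the weak expander mean convex RMCF of \cite{BCW} with initial datum the smooth expander mean convex hypersurface $\tilde{\Sigma}_{s_0}$, which together with $\Sigma$ bounds a region in $\R^{n+1}$. This yields a unit-regular, cyclic, integral rescaled Brakke flow $\{\tilde{\mu}_s\}_{s\ge s_0}$ which coincides with the smooth flow $\{\tilde{\Sigma}_s\}$ for $s\in[s_0,0]$ (and in general up to the first singular time), stays expander mean convex for all $s\ge s_0$, and remains on one side of $\Sigma$. Concatenating $\{\tilde{\Sigma}_s\}_{s\le s_0}$ with $\{\tilde{\mu}_s\}_{s\ge s_0}$ (these agree near $s_0$) gives an eternal rescaled Brakke flow $\tilde{\mathcal{M}}'$. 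By \cite{BCW}, the forward limit $\lim_{s\to\infty}\tilde{\mu}_s = \mathcal{H}^n\llcorner\Gamma$ exists and $\Gamma$ is a stable self-expander asymptotic to $\mathcal{C}$ — uniqueness of the limit coming from expander mean convexity together with the monotonicity of the relative expander entropy — and, since $2\le n\le 6$, $\Gamma$ has singular set of codimension at least $7$ and is therefore smooth. Thus $\tilde{\mathcal{M}}'$ is a rescaled Brakke flow with $\lim_{s\to-\infty}\mathcal{H}^n\llcorner\tilde{\mu}_s = \mathcal{H}^n\llcorner\Sigma$ and $\lim_{s\to\infty}\mathcal{H}^n\llcorner\tilde{\mu}_s = \mathcal{H}^n\llcorner\Gamma$, that is, a (possibly singular) Morse flow line extending $v$.

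The main obstacle is running the construction of \cite{BCW} from the given initial datum: $\tilde{\Sigma}_{s_0}$ is non-compact and asymptotically conical, so the construction (via elliptic regularization, or via the level-set flow) must be localized near spatial infinity, where $\tilde{\Sigma}_{s_0}$ is $C^\infty_{\loc}$-close to $\mathcal{C}$; one must verify that the weak flow genuinely continues the given smooth flow without an instantaneous loss of mass (this is where unit-regularity enters); and the forward-limit analysis requires, in the range $2\le n\le 6$, the trapping of the flow between two asymptotically conical self-expanders (cf. \cite[Proposition 8.21]{CCMSGeneric}) together with the entropy bounds of \cite{BWRelativeEntropy}. Each of these amounts to a routine modification of the arguments already carried out in \cite{BCW}.
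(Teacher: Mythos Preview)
Your proposal is correct and follows essentially the same approach as the paper: propagate $v>0$ forward via the strong maximum principle, then invoke the expander mean convex weak flow construction of \cite{BCW} to extend past singularities, and use the dimension restriction $2\le n\le 6$ to conclude the forward limit $\Gamma$ is a smooth stable self-expander. The paper's proof is terser but identical in structure; the only extra remark there is that the results in \cite[Section 3]{BCW} are stated for perturbations by the first eigenfunction $\phi_1$, but the argument only uses that the perturbation has a sign --- which is exactly the ``routine modification'' you flag at the end.

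One small terminological point: you write that $\tilde{\Sigma}_{s_0}$ is expander mean convex ``exactly because $v(\cdot,s_0)>0$,'' but strictly speaking expander mean convexity is the condition $H_{\tilde{\Sigma}_s}+\tfrac{1}{2}\xX\cdot\nN_{\tilde{\Sigma}_s}>0$, i.e.\ $\partial_s v>0$, not $v>0$. The paper makes the same identification in the paragraph preceding the proposition, and for an ancient graphical RMCF with $v(\cdot,-\infty)=0$ the two conditions are tied together; in any case, what the \cite{BCW} construction actually needs is precisely that the flow lies on one side of $\Sigma$, which is what $v>0$ provides, so the argument goes through.
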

\begin{proof}
	Since $v(\cdot, s_0) > 0$,  by the strong maximum principle (see eg. \cite{Smoczyk}), we have $v > 0$ on $(s_0,0]$. We can then follow the construction in \cite[Section 3]{BCW} to extend the flow in an expander mean convex way past the singularity (the results there are stated for perturbations of the first eigenfunction $\phi_1$, but the proof only uses the fact that $\phi_1$ has a sign).  Since we are in low dimensions, the limiting self-expander $\Gamma$ must be smooth, and so $v$ gives rise to a Morse flow line between $\Sigma$ and $\Gamma$.
\end{proof}
The prototypical example of an expander mean convex RMCF is the ancient RMCF corresponding to $a_1 \phi_1$ in \cref{ancient-solution-existence}, which is expander mean convex for all time. However, the above proposition is not so effective as it is the condition $v(\cdot,s) > 0$ is hard to check. Indeed, the first eigenfunction $\phi_1$ is the only eigenfunction that has a sign, but at the same time it also has the best asymptotic decay among all eigenfunctions. In fact, an ambitious conjecture would be $v(\cdot, s_0) > 0$ for some $s_0 \in (-\infty,0)$ implies the same for all $s_0 < 0$.  \par 
We end the section by recording the following uniqueness theorem for expander mean convex RMCFs asymptotic to a generic cone, which might be of independent interest.
\begin{prop}
	\label{1-sided-flow}
	Suppose $\mathcal{C}$ is generic, then, up to time translation, there is a unique solution $v$ to \cref{nonlinear-problem} such that $v > 0$ on $(-\infty,0)$. 
\end{prop}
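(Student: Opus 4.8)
The plan is to combine the classification in \cref{strong-uniqueness} with a backward spectral analysis driven by positivity. As in \cref{strong-uniqueness}, I regard a solution of \cref{nonlinear-problem} as a small graph over $\Sigma$ satisfying \cref{smooth-convergence}, so that the machinery of this section applies. For existence I would simply point to the expander mean convex ancient RMCF built in \cite[Section 3]{BCW} as a limit of forward rescaled flows issuing from one-sided $\phi_1$-perturbations of $\Sigma$: it lies strictly on one side of $\Sigma$, which in the graphical picture means it is a solution $v$ of \cref{nonlinear-problem} with $v>0$ on $(-\infty,0)$ (cf.\ the discussion preceding this proposition and the proof of \cref{existence-flow-line}).

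For uniqueness, let $v>0$ solve \cref{nonlinear-problem}. The first step is to determine the backward asymptotics of $v$. Since $v\not\equiv 0$ and $\mathcal{C}$ is generic, one has $V_0\equiv 0$, and running the Merle--Zaag estimates \cref{plus-mode}--\cref{minus-mode} together with \cref{ode-lemma} exactly as in the proof of \cref{strong-uniqueness} shows that the negative modes dominate (so \cref{negative-dominant} holds) and that $v(\cdot,s)/\norm{v(\cdot,s)}_W$ converges, as $s\to-\infty$, in $C^2_{\loc}$ to a unit eigenfunction $\psi$ of $-L_\Sigma$ lying in the eigenspace of $\lambda_{I^*}$, where $I^*$ is the dominant index from that proof. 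The second step feeds in positivity: since $v(\cdot,s)>0$ for every $s$, the limit satisfies $\psi\ge 0$ and $\psi\not\equiv 0$. If $\lambda_{I^*}>\lambda_1$ then $\psi$ is $W$-orthogonal to $\phi_1$, so $\inner{\psi,\phi_1}=\int_\Sigma \psi\phi_1\,\wt=0$; but the bottom eigenvalue of $-L_\Sigma$ is simple with a positive eigenfunction $\phi_1>0$, so $\psi\phi_1\ge 0$ and this integral vanishes only if $\psi\equiv 0$, a contradiction. Hence $\lambda_{I^*}=\lambda_1$, $\psi$ is a scalar multiple of $\phi_1$, and $\psi\ge 0$ forces $\psi=\phi_1$. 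Thus $I^*=1$: the sharp backward decay rate of $v$ is $e^{-\lambda_1 s}$, and $\inner{v(\cdot,s),\phi_i}=o(e^{-\lambda_i s})$ as $s\to-\infty$ for every $i\ge 2$.

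It remains to collapse the family of positive solutions to a single time-translation orbit. By \cref{strong-uniqueness}, after a time translation $v=\Phi(a)$ for some $a=(a_1,\dots,a_I)\in B^I_\eps(0)\setminus\{0\}$ as in \cref{ancient-solution-existence}. Projecting \cref{nonlinear-problem} onto each $\phi_i$ with $i\ge 2$ and using $\inner{v(\cdot,s),\phi_i}=o(e^{-\lambda_i s})$ yields the constraint $a_i=\int_{-\infty}^{0}e^{\lambda_i\sigma}\inner{Q(v(\cdot,\sigma)),\phi_i}\,d\sigma$; the right-hand side is $O(\abs{a}^2)$, so for $\eps$ small this is a contraction in $(a_2,\dots,a_I)$ and determines these coordinates from $a_1$ --- equivalently, $v$ is determined by the $\phi_1$-coefficient of its backward leading order. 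Finally, replacing $v$ by $v(\cdot,\,\cdot-\sigma)$ multiplies that coefficient by $e^{\lambda_1\sigma}$, so a single time translation normalizes it; hence any two positive solutions of \cref{nonlinear-problem} agree up to a time translation.

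The step I expect to be the main obstacle is the first one in the uniqueness argument: extracting a genuine (not merely subsequential) renormalized backward limit $\psi$ and identifying it as an element of a single eigenspace. This is precisely the Merle--Zaag analysis, and while it is essentially carried out in the proof of \cref{strong-uniqueness}, one must verify that the sharp decay $\delta(s)\le Ce^{-\lambda_{I^*}s}$ and the mode-domination bounds there suffice to upgrade $W$-convergence of the renormalizations to $C^2_{\loc}$-convergence (needed to pass positivity of $v(\cdot,s)$ to the limit) and to rule out oscillation among eigenfunctions sharing the eigenvalue $\lambda_{I^*}$. Once this is secured, the simplicity and positivity of the ground state $\phi_1$ finish the spectral dichotomy, and the last paragraph is routine bookkeeping with \cref{ancient-solution-existence}.
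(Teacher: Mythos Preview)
Your proposal is correct and follows essentially the same route as the paper: both arguments run the Merle--Zaag analysis from the proof of \cref{strong-uniqueness} to isolate a dominant eigenvalue $\lambda_{I^*}$, use positivity of $v$ to force the renormalized backward limit to be a nonnegative eigenfunction (hence $\lambda_{I^*}=\lambda_1$ by simplicity of the ground state), and then invoke \cref{strong-uniqueness} to write $v=\Phi(a)$.

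There are two minor differences in execution worth noting. First, to extract the limiting eigenfunction the paper normalizes the projection $h^{s}=V_{=\lambda_{I^*}}^{-1}\Pi_{=\lambda_{I^*}}v$ rather than $v/\|v\|_W$; this sidesteps the concern you flag at the end, since $h^s$ lives in a fixed finite-dimensional eigenspace and the bound $\|{-}\min\{0,\Pi_{=\lambda_{I^*}}v\}\|_W\le C\delta(s)V_{=\lambda_{I^*}}$ (from $v>0$ and dominance of the $\lambda_{I^*}$-mode) passes directly to the limit. Second, for the final collapse the paper feeds the sharp decay $\|v(\cdot,s)\|\le Ce^{-\lambda_1 s}$ back into the closeness estimate \cref{closeness-estimate} to argue $a_i=0$ for $i\ge 2$, whereas you derive $a_i=\int_{-\infty}^{0}e^{\lambda_i\sigma}\langle Q(v),\phi_i\rangle\,d\sigma=O(|a|^2)$ and use an implicit-function/contraction argument to reduce to the $a_1$-axis modulo time translation. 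Both endgames are acceptable; the paper's is shorter, while yours makes the one-parameter structure more explicit.
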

\begin{proof}
	Let $v > 0$ be a positive solution to $\cref{nonlinear-problem}$ on $(-\infty,0)$. As $\mathcal{C}$ is generic, we can follow the spectral analysis above to get the sharp asymptotic decay
	\begin{align}
		\label{sharp-asymptotics}
		\delta(s) \le Ce^{-\lambda_{I_*}s}.
	\end{align}
	We claim that $\lambda_{I_*} = \lambda_1$.  This follows from the fact that
	\begin{align*}
		V_{\ne \lambda_{I_*}}(s) \le C \delta(s) V_{=\lambda_{I_*}} 
	\end{align*}
	when $s \le s_0$ for some $s_0 < 0$ (indeed if the above were true, it will violate the sharpness of the estimate \cref{sharp-asymptotics}). To see this, note that $v > 0$ implies that
	\begin{align*}
		-\min\{0, \Pi_{=\lambda_{I_*}} v\} \le \abs{\Pi_{\ne \lambda_{I_*} }v} \implies \norm{-\min\{0, \Pi_{=\lambda_{I_*}}v\}} \le C\delta(s)  V_{=\lambda_{I_*}} 
	\end{align*}
	Now let $h^{s} =  V_{=\lambda_{I_*}}^{-1}(s) \Pi_{=\lambda_{I_*}} v(\cdot, s)$, then as $s \to -\infty$, $h^{s}$ converges to an $\lambda_{I_*}$-eigenfunction $h$ of norm 1. Since $\delta(s) \to 0$ as $s \to -\infty$, it follows that
	\begin{align*}
		\norm{-\min\{0,h\}}_W = 0,
	\end{align*}
	and so $h \ge 0$. By standard spectral theory the only eigenfunction that does not change sign corresponds to the lowest eigenvalue. Hence $\lambda_{I_*} = \lambda_1$. In particular, for any positive solution $v$, there exists a constant $\alpha_1 \ne 0$ such that 
	\begin{align*}
		\lim_{s \to -\infty} e^{s \lambda_1} v(\cdot, s) = \alpha_1 \phi_1.
	\end{align*}  \par
	Since $\mathcal{C}$ is generic, we can apply the strong uniqueness theorem \cref{strong-uniqueness} to conclude that $v = \Phi(a)$ for some $a = (a_1,\ldots,a_I) \in \R^I$. By \cref{closeness-estimate}, we have that, for $s \le 0$,
	\begin{align*}
		\norm{v - \sum_{i=1}^I a_i e^{- \lambda_i s} \phi_i}_{C^{2,\alpha}_P(\Sigma \times [s-1,s])} \le e^{\delta_0 s} \beta \sum_{i=1}^I a_i^2
	\end{align*}
	where $0 < \delta_0 < -\lambda_I$ and $\beta > 0$. Multiplying by $e^{\lambda_1 s}$ on both sides yields
	\begin{align*}
		\norm{e^{s\lambda_1}v - \sum_{i=1}^I a_i e^{(-\lambda_i + \lambda_1) s} \phi_i}_{C^{2,\alpha}_P(\Sigma \times [s-1,s])}\le e^{\delta_0 + \lambda_1 s} \beta \sum_{i=1}^I a_i^2
	\end{align*}
	Since $v(\cdot, s) \le Ce^{-\lambda_1 s}$ and $\delta_0 +\lambda_1 < \lambda_1 - \lambda_I \le \lambda_1 - \lambda_i$ for all $i \ge 1$, the above can only hold if $a_i = 0$ for all $2 \le i \le I$. This proves that there is a one-parameter family of positive solutions to \cref{nonlinear-problem}, which corresponds precisely to time translations.
\end{proof}
\begin{rem}
	By modifying a beautiful iteration argument of Chodosh--Choi--Mantoulidis--Schulze \cite[Corollary 5.2]{CCMSGeneric}, we expect that the uniqueness continues to hold without the genericity assumption (essentially, the nonlinear term $Q$ in the expander case satisfies the same estimates as in the shrinker case). As our article mostly concerns with generic cones, we have chosen to state the simpler version of the uniqueness result.
\end{rem}
\section{Mean curvature flows}
\label{cone-section}
All of our analysis so far has been on the level of RMCF, and essentially only relies on the fact that any such flow, up to a time translation, is an entire graph over the self-expander $\Sigma$. In this section, we study mean curvature flows coming out of cones and indicate when such a flow fits into our discussion above. We will work in low dimensions, i.e. $2 \le n \le 6$., where the structure theory of self-expanders is best known, thanks to a series of works of Bernstein and Wang \cite{BWSpace, BWSmoothCompactness, BWMountainPass, BWRelativeEntropy}. \par 
We say an asymptotically conical hypersurface $\Gamma$ is \textit{trapped} if there exist two self-expanders $\Sigma_1$ and $\Sigma_2$ asymptotic to $\mathcal{C}$ such that there is some radius $R_0 > 0$ such that 
\begin{align*}
	\Gamma \setminus B_{R_0}(0) \subset \Omega_1^+ \cap \Omega_2^-,
\end{align*}
where $\Omega_1^{\pm}, \Omega_2^{\pm}$ denote the connected components of $\mathbb{R}^{n+1} \setminus \Sigma_1$ and $\mathbb{R}^{n+1} \setminus \Sigma_2$ respectively, oriented in a way such that $\Omega_1^+ \subset \Omega_2^+$. In the special case $\Gamma = \Sigma_v$, using the asymptotic structure of self-expanders at infinity \cite{Bernstein}, we deduce that $v$ satisfies the sharp decay estimates
\begin{align}
\label{trapped-decay}
	\abs{v(p)} \le C\abs{\xX(p)}^{-n-1}e^{-\frac{\abs{\xX(p)}^2}{4}} \text{ when } \abs{\xX(p)} > R_0
\end{align}
First of all, under the trapping assumption, we can prove that the relative expander entropy $E_{\rel}^*$ defined in \cref{uniqueness-section} indeed coincides with the usual notion $E_{\rel}$ from \cite{BWRelativeEntropy}. This seemingly innocent fact has to do with the failure of the normal graph map $\fF_v$ being bijective in the annulus $B_{R+2}(0) \setminus B_R(0)$. Due to the large weight $\wt$, this difference cannot be killed unless the function $v$ has a very good decay.
\begin{prop}
	\label{entropy-equivalence}
	Suppose $v: \Sigma \to \R$ is a $W^1$ function such that $E_{\rel}^*[\Sigma_v,\Sigma] < \infty$. If $\Sigma_v$ is trapped between two self-expanders $\Gamma_1,\Gamma_2$ asymptotic to $\mathcal{C}$ then 
	\begin{align*}
		E_{\rel}[\Sigma_v,\Sigma] = E_{\rel}^*[\Sigma_v,\Sigma] < \infty.
	\end{align*}
\end{prop}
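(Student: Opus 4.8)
The plan is to evaluate both quantities via the area formula for the normal graph map $\fF_v$ and to show that the $R$-th term in the limit defining $E_{\rel}[\Sigma_v,\Sigma]$ differs from $E_{\rel}^*[\Sigma_{\chi_R v},\Sigma]$ by an error that vanishes as $R\to\infty$; since $E_{\rel}^*[\Sigma_{\chi_R v},\Sigma]\to E_{\rel}^*[\Sigma_v,\Sigma]$ by definition (and this limit is finite by hypothesis, and bounded below by \cref{finiteness} when $\|v\|_{C^{2,\alpha}}$ is small), this forces $\int_{\Sigma_v\cap B_R}\wt-\int_{\Sigma\cap B_R}\wt$ to converge to the same value. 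Fix $R>R_0$ large, so that \cref{trapped-decay} applies on $\{\abs{\xX}>R_0\}$ and both $\fF_v$ and $\fF_{\chi_R v}$ are embeddings (the latter because $\chi_R v$ is $C^1$-small, using $\abs{\nabla\chi_R}\le C$ and the smallness of $v$). Writing $J\fF_v$ for the Jacobian of $\fF_v$, the area formula gives
\[
\int_{\Sigma_v\cap B_R}\wt=\int_{\{\abs{\fF_v}<R\}}J\fF_v\,e^{\frac{\abs{\fF_v}^2}{4}},\qquad \int_{\Sigma\cap B_R}\wt=\int_{\{\abs{\xX}<R\}}e^{\frac{\abs{\xX}^2}{4}},
\]
while $E_{\rel}^*[\Sigma_{\chi_R v},\Sigma]=\int_\Sigma\big(J\fF_{\chi_R v}e^{\frac{\abs{\fF_{\chi_R v}}^2}{4}}-e^{\frac{\abs{\xX}^2}{4}}\big)$ has integrand supported in $\{\abs{\xX}<R+2\}$ and equal to $J\fF_v e^{\abs{\fF_v}^2/4}-e^{\abs{\xX}^2/4}$ on $\{\abs{\xX}\le R\}$. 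Comparing, the two $R$-th terms differ by exactly (i) a ``shell'' contribution $\int_{\{\abs{\xX}\le R\}\triangle\{\abs{\fF_v}<R\}}\pm J\fF_v e^{\abs{\fF_v}^2/4}$, and (ii) the contribution of the transition annulus $\{R<\abs{\xX}<R+2\}$ to $E_{\rel}^*[\Sigma_{\chi_R v},\Sigma]$.

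For the shell term (i): since $\big|\,\abs{\fF_v(p)}-\abs{\xX(p)}\,\big|\le\abs{v(p)}$, the symmetric difference lies in $\{p:\big|\,\abs{\xX(p)}-R\,\big|\le\abs{v(p)}\}$, and there \cref{trapped-decay} gives $\abs{v(p)}\le CR^{-n-1}e^{-R^2/4}$. Because $\Sigma$ is asymptotically conical, the region $\{\,\big|\,\abs{\xX}-R\,\big|\le\delta\,\}$ has $\mathcal H^n$-measure $O(R^{n-1}\delta)$, so this shell has measure $O(R^{-2}e^{-R^2/4})$; moreover on it $\abs{\fF_v}$ is within $O(R^{-n}e^{-R^2/4})$ of $R$, whence $e^{\abs{\fF_v}^2/4}\le 2e^{R^2/4}$ and $J\fF_v\le 2$. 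Thus the shell term is $O(R^{-2})\to 0$. For the annulus term (ii): on $\{R<\abs{\xX}<R+2\}$ the pointwise area-formula expansion established in the proof of \cref{motivating-computation} yields, with $w=\chi_R v$,
\[
J\fF_w\,e^{\frac{\abs{\fF_w}^2}{4}}-e^{\frac{\abs{\xX}^2}{4}}=e^{\frac{\abs{\xX}^2}{4}}\Big(\tfrac12\abs{\nabla_\Sigma w}^2+\tfrac12(\tfrac12-\abs{A_\Sigma}^2)w^2+M(w,\nabla_\Sigma w)\Big),
\]
where all coefficients are bounded on $\Sigma$ — there is no growth in $\abs{\xX}$, since on a self-expander $H_\Sigma=\tfrac12\xX\cdot\nN_\Sigma=O(\abs{\xX}^{-1})$ and $\abs{A_\Sigma}=O(\abs{\xX}^{-1})$ — and $\abs{M(w,\nabla_\Sigma w)}\le C(\abs{w}+\abs{\nabla_\Sigma w})^3\le C\eps(\abs{w}^2+\abs{\nabla_\Sigma w}^2)$ when $\|w\|_{C^1}<\eps$. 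Integrating over the annulus and using $\abs{\nabla_\Sigma(\chi_R v)}\le C\abs{v}+\abs{\nabla_\Sigma v}$, the annulus term is bounded by $C\int_{\{\abs{\xX}>R\}}e^{\abs{\xX}^2/4}(v^2+\abs{\nabla_\Sigma v}^2)$, a tail of the finite $W^1$-norm of $v$, hence $\to 0$. Combining (i) and (ii) gives $\int_{\Sigma_v\cap B_R}\wt-\int_{\Sigma\cap B_R}\wt\to E_{\rel}^*[\Sigma_v,\Sigma]$, i.e. $E_{\rel}[\Sigma_v,\Sigma]=E_{\rel}^*[\Sigma_v,\Sigma]<\infty$, which is the assertion.

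The crux — and the reason the trapping hypothesis is indispensable — is beating the exponentially large weight $e^{\abs{\xX}^2/4}$ near radius $R$. In the shell term this is precisely what \cref{trapped-decay} is for: a bound $\abs{v}=O(\abs{\xX}^{-n-1}e^{-\abs{\xX}^2/4})$ makes (measure of shell)$\,\times\,e^{R^2/4}$ vanish, whereas a merely exponential decay would not. In the annulus term one must ensure no power of $\abs{\xX}$ enters the coefficients of the quadratic/cubic expansion, which the self-expander asymptotics $H_\Sigma,\abs{A_\Sigma}=O(\abs{\xX}^{-1})$ guarantee; the finiteness of $\|v\|_{W^1}$ then disposes of the $\nabla_\Sigma v$-terms, for which no pointwise super-exponential decay is available. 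The remaining ingredients — the area formula, validity of the two normal-graph parametrizations for $R$ large, and the bookkeeping of which regions contribute — are routine.
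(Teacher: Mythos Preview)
Your argument is correct and follows essentially the same route as the paper: both proofs pull back via the area formula, isolate a thin ``shell'' near radius $R$ whose measure is controlled by the trapping decay \cref{trapped-decay} (so that measure $\times\, e^{R^2/4}\to 0$), and a transition annulus $\{R<\abs{\xX}<R+2\}$ handled by the expansion \cref{expansion-1} together with the finiteness of $\norm{v}_{W^1}$. The only cosmetic difference is bookkeeping --- the paper organizes the decomposition via the sets $Y_R=\fF_{\chi v}^{-1}(\Sigma_{\chi v}\cap B_R)$ and $Z_R$, whereas you compare $\{\abs{\xX}\le R\}$ with $\{\abs{\fF_v}<R\}$ directly --- but the two shell regions coincide up to $O(R^{-2}e^{-R^2/4})$ and the estimates are identical.
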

\begin{proof}
	We fix $R$ sufficiently large and write $\chi_R = \chi$ for simplicity. Let $A_{r,R} = B_{R}(0) \setminus \bar{B}_r(0)$. Since $\fF_{\chi v}$ is an embedding, it is a diffeomorphism between $\Sigma_{\chi v} \cap B_{R+2}(0)$ and $\Sigma \cap B_{R+2}(0)$. Let $Y_R = \fF^{-1}_{\chi v}(\Sigma_{\chi v} \cap B_{R}(0)) \subset \Sigma\cap B_{R+2}(0)$ and $Z_R = (\Sigma \cap B_{R+2}(0) \setminus Y_R)$. By the triangle inequality 
	\begin{align*}
		\abs{\fF_{\chi v}(p)} \le \abs{\xX(p)} + \abs{v(p)},
	\end{align*}
	consequently the sets $Y_R$ satisfy 
	\begin{align}
		Y_R \Delta (\Sigma \cap B_R(0)) \subset \Sigma \cap A_{R - \bar{v}_R, R+\bar{v}_R}
	\end{align}
	where $\bar{v}_R = \sup_{\Sigma \cap A_{R,R+2}} \abs{v}$, and $\Delta$ denotes the symmetric difference of two sets. Since $\Sigma$ is trapped, it follows from \cref{trapped-decay} that
	\begin{align*}
		\abs{Y_R \Delta (\Sigma \cap B_R(0))}= O(R^{-2} e^{\frac{-R^2}{4}}).
	\end{align*}
	Now write 
	\begin{align*}
		\int_{\Sigma_{\chi v}} \wt - \int_{\Sigma} \wt &= \left(\int_{\Sigma_{v} \cap B_{R}(0)} \wt - \int_{Y_R} \wt\right) \\
		&\phantom{{}={}}+ \left(\int_{\Sigma{\chi v} \cap A_{R,R+2}} \wt - \int_{Z_R} \wt\right).
	\end{align*}
	We can estimate
	\begin{align*}
		\abs{\int_{Y_R} \wt - \int_{\Sigma \cap B_R(0)} \wt} \le \int_{Y_R \Delta (\Sigma \cap B_R(0))} \wt = O(R^{-2}),
	\end{align*}
	where we used the fact that, when $R$ is sufficiently large
	\begin{align*}
		\wt \le e^\frac{(R+\bar{v}_R)^2}{4} \le e^{\frac{R^2}{4}} \left(1 + \bar{v}_R R + 2\bar{v}_R^2\right) \le e^{\frac{R^2}{4}} + CR^{-n} \le e^{\frac{R^2}{4}} + 1.
	\end{align*}
	Thus 
	\begin{align}
		\label{eq-3-1}
		\lim_{R\to \infty} \int_{Y_R} \wt - \int_{\Sigma \cap B_R(0)} \wt = 0.
	\end{align}
	Moreover, using \cref{expansion-1}, we get that
	\begin{align*}
		&\phantom{{}\le{}}\int_{\Sigma{\chi v} \cap A_{R,R+2}} \wt - \int_{Z_R} \wt \\
		&\le \frac{1}{2} \int_{Z_R} (\abs{\nabla_\Sigma v}^2 + \abs{\nabla_\Sigma \chi} \chi(\abs{\nabla_\Sigma v}^2 + v^2) + (\frac{3}{2} - \abs{A_\Sigma}^2)v^2 + C\eps(\abs{\nabla_\Sigma v}^2 + v^2)) \wt.
	\end{align*}
	As $v \in W^1$,
	\begin{align*}
		\lim_{R\to \infty} \int_{Z_R} (\abs{\nabla_\Sigma v}^2 + v^2)\wt = 0.
	\end{align*}
	Since $\abs{A_\Sigma}$ is bounded, the above implies that 
	\begin{align}
		\label{eq-3-2}
		\lim_{R\to \infty} \int_{\Sigma{\chi v} \cap A_{R,R+2}} \wt - \int_{Z_R} \wt = 0
	\end{align}
	\cref{eq-3-1} and \cref{eq-3-2} imply the desired equality.
\end{proof}
The above equivalence means that we have all the tools from \cite{BWRelativeEntropy} in our disposal, and from now on we will write unambiguously $E_{\rel}$ for the relative entropy. \par 
Now let $\mathcal{M} = \{\mu_t\}_{t \in (0,T]}$ be an integral Brakke flow coming out of $\mathcal{C}$ in the sense that 
\begin{align*}
	\lim_{t \to 0} \mu_t = \mathcal{H}^n \llcorner \mathcal{C}.
\end{align*}
$\mathcal{M}$ is contained in the level set flow of $\mathcal{C}$ (which necessarily fattens as long as there are more than one self-expanders asymptotic to $\mathcal{C}$). The key fact is that, when $2 \le n \le 6$, by \cite[Theorem 8.21]{CCMSGeneric}, the two outermost flows of the cone $\mathcal{C}$ corresponding to the boundary of the level set flow are given by two stable self-expanders (which are smooth when $2 \le n \le 6$). Hence $\mathcal{M}$ is, in fact, trapped between two asymptotically conical self-expanders. In particular the following forward monotonicity formula holds (here we have taken $f = 1$):
\begin{prop}[Proposition 6.5 of \cite{BWRelativeEntropy}]
\label{forward-monotonicity}
	Let $\mathcal{M}$ be as above, and let $\tilde{\mathcal{M}} $ denote the corresponding rescaled Brakke flow defined on $(-\infty, S)$. Then for any $-\infty < s_1 \le s_2 \le S$, we have 
	\begin{align*}
		E_{\rel}[\tilde{\mu}_{s_1}, \Sigma] \ge E_{\rel}[\tilde{\mu}_{s_2},\Sigma] + \int_{s_1}^{s_2}  \abs{\hH_{\tilde{\mu}_s} - \frac{\xX^\perp}{2}}^2 \wt d\tilde{\mu}_s ds.
	\end{align*}
\end{prop}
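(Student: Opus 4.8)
The plan is to realize $E_{\rel}[\tilde{\mu}_s,\Sigma]$ as a renormalized weighted-area difference and to read the claimed inequality off the defining rescaled Brakke inequality, i.e.\ condition~(c'), taking the expander weight $\wt$ in the role of the test function. As motivation, observe that in the smooth case the normal velocity of the RMCF is $\mathbf{v}=\hH-\frac{\xX^\perp}{2}$, so using $\nabla_{\R^{n+1}}\wt=\frac{\xX}{2}\wt$ together with the first variation of weighted area,
\begin{align*}
	\frac{d}{ds}\int_{\tilde{\Sigma}_s}\wt\,d\mathcal{H}^n=\int_{\tilde{\Sigma}_s}\Big(\tfrac{\xX^\perp}{2}\cdot\mathbf{v}-\hH\cdot\mathbf{v}\Big)\wt\,d\mathcal{H}^n=-\int_{\tilde{\Sigma}_s}\Big|\hH-\tfrac{\xX^\perp}{2}\Big|^2\wt\,d\mathcal{H}^n;
\end{align*}
integrating in $s$ and subtracting the ($s$-independent) weighted area of $\Sigma$ from the first two terms gives the asserted identity, with equality. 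For a general integral, cyclic, unit-regular rescaled Brakke flow one loses the equality but keeps the inequality in the stated direction, because (c') is already an inequality of the correct sign.

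The actual work is to make sense of the non-compactly-supported test function. I would apply (c') with $\phi=\chi_R\wt$ (so that $\partial_s\phi\equiv0$); the same algebra as above gives, for $s_1\le s_2$,
\begin{align*}
	\int\chi_R\wt\,d\tilde{\mu}_{s_2}-\int\chi_R\wt\,d\tilde{\mu}_{s_1}\le\int_{s_1}^{s_2}\!\!\int\Big(-\chi_R\big|\hH-\tfrac{\xX^\perp}{2}\big|^2\wt+\wt\big(\hH-\tfrac{\xX^\perp}{2}\big)\cdot\nabla\chi_R\Big)\,d\tilde{\mu}_s\,ds,
\end{align*}
where the last term is supported on $\tilde{\mu}_s\cap(B_{R+2}(0)\setminus B_R(0))$. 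Subtracting $\int\chi_R\wt\,d(\mathcal{H}^n\llcorner\Sigma)$ from both left-hand terms (it cancels), letting $R\to\infty$, and using monotone convergence for the dissipation term $\chi_R\,|\hH-\tfrac{\xX^\perp}{2}|^2\wt\nearrow|\hH-\tfrac{\xX^\perp}{2}|^2\wt$ will yield the result, \emph{provided} (i) the truncated relative areas $\int\chi_R\wt\,d\tilde{\mu}_{s_i}-\int\chi_R\wt\,d(\mathcal{H}^n\llcorner\Sigma)$ converge to $E_{\rel}[\tilde{\mu}_{s_i},\Sigma]$, and (ii) $\int_{s_1}^{s_2}\!\int\wt\,|\hH-\tfrac{\xX^\perp}{2}|\,|\nabla\chi_R|\,d\tilde{\mu}_s\,ds\to0$ as $R\to\infty$.

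Both (i) and (ii) are exactly where the trapping hypothesis enters, and establishing them is the main obstacle. Since the level-set flow of $\mathcal{C}$ is pinched between its two outermost flows --- the static rescalings of two smooth stable self-expanders, by \cite[Theorem 8.21]{CCMSGeneric} in the range $2\le n\le 6$ --- the flow $\tilde{\mu}_s$ stays trapped between two asymptotically conical self-expanders for every $s$ with a uniform radius, so on a fixed far annulus $\tilde{\mu}_s=\Sigma_{v_s}$ is a smooth normal graph over $\Sigma$ with the sharp decay \cref{trapped-decay}. On the one hand, this decay forces the difference of the truncated weighted areas of $\tilde{\mu}_s$ and $\Sigma$ to converge (each integral diverges, but their difference does not, and the discrepancy between the soft cutoff $\chi_R$ and the hard cutoff $B_R$ is annular and vanishes) --- this gives (i), and is essentially \cref{entropy-equivalence} together with Bernstein--Wang's analysis of the relative entropy for trapped hypersurfaces. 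On the other hand, feeding \cref{trapped-decay} through the expander mean curvature formula \cref{expander-mean-curvature-formula} yields $\big|\hH_{\tilde{\mu}_s}-\tfrac{\xX^\perp}{2}\big|\wt=O(|\xX|^{-n-1})$ near infinity --- the potential power loss in $L_\Sigma v_s$ does not occur, because the leading asymptotic profile of $v_s$ is governed by an indicial root of $L_\Sigma$, so $L_\Sigma v_s$ inherits the decay of $v_s$ --- and this beats the $O(|\xX|^{n-1})$ growth of the cross-sectional area, giving (ii). Carrying out this asymptotic analysis carefully is the substantive part of the argument; it is precisely the content of \cite[Proposition 6.5]{BWRelativeEntropy}, whose treatment I would follow. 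Finally, if $E_{\rel}[\tilde{\mu}_{s_1},\Sigma]=\infty$ the inequality is vacuous, and otherwise one reads off in addition that the dissipation integral is finite.
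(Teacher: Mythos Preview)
The paper does not supply its own proof of this proposition; it is quoted verbatim as \cite[Proposition 6.5]{BWRelativeEntropy} and used as a black box. Your sketch---testing the rescaled Brakke inequality (c') against $\chi_R\wt$, subtracting the $\Sigma$-contribution, and invoking trapping to show both that the truncated differences converge to $E_{\rel}$ and that the annular error $\int\wt\,|\hH-\tfrac{\xX^\perp}{2}|\,|\nabla\chi_R|\,d\tilde{\mu}_s$ vanishes---is exactly the strategy of the Bernstein--Wang argument you cite, and you correctly flag the asymptotic analysis behind (ii) as the substantive step and defer to that reference for it.
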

The following backward convergence to a self-expander is the starting point of the analysis. 
\begin{prop}
	\label{backward-limit}
	Let $\mathcal{M}$ be as above. Up to passing to a subsequence $s_i \to -\infty$, there exists a self-expander $\Sigma$ such that
	\begin{align*}
		\lim_{i \to \infty} \tilde{\mu}_{s_i} = \mathcal{H}^n \llcorner \Sigma.
	\end{align*}
\end{prop}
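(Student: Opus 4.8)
The plan is the standard compactness argument for ancient rescaled flows, with the forward monotonicity formula \cref{forward-monotonicity} supplying the key a priori bound. As discussed above, $\mathcal{M}$ is trapped between two asymptotically conical self-expanders; fix one of them (or any self-expander asymptotic to $\mathcal{C}$) and call it $\Sigma_0$. The first observation is that $s \mapsto E_{\rel}[\tilde\mu_s, \Sigma_0]$ is \emph{uniformly bounded}: by \cref{forward-monotonicity} it is non-increasing, it is bounded below by \cite{BWRelativeEntropy}, and it is bounded above uniformly in $s$ because the trapping controls $\tilde\mu_s$ on the region $\{\abs{\xX} > R_0\}$ through the sharp decay \cref{trapped-decay} with $s$-independent constants, while the entropy bound $\lambda[\tilde\mu_s] \le \lambda[\mathcal{C}] < \infty$ (valid by Huisken's monotonicity formula and the scaling invariance of entropy) controls $\tilde\mu_s$ on the bounded region $\{\abs{\xX}\le R_0\}$, where the weight $\wt$ is itself bounded. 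Consequently $\lim_{s\to-\infty} E_{\rel}[\tilde\mu_s,\Sigma_0]$ exists and is finite, and \cref{forward-monotonicity} yields
\begin{align*}
	\int_{-\infty}^{0} \int \abs{\hH_{\tilde\mu_s} - \frac{\xX^\perp}{2}}^2 \wt \, d\tilde\mu_s \, ds < \infty,
\end{align*}
so there is a sequence $s_i \to -\infty$ along which
\begin{align*}
	\int \abs{\hH_{\tilde\mu_{s_i}} - \frac{\xX^\perp}{2}}^2 \wt \, d\tilde\mu_{s_i} \longrightarrow 0.
\end{align*}

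Next I would extract a limit. The entropy bound gives uniform local area bounds $\tilde\mu_{s_i}(B_r(x)) \le C\lambda[\mathcal{C}] r^n$, and together with the resulting local bound on $\abs{\hH_{\tilde\mu_{s_i}}}^2$ from the display above ($\frac{\xX^\perp}{2}$ being bounded on compact sets), Allard's compactness theorem for integral varifolds of locally bounded first variation lets me pass to a subsequence along which $\tilde\mu_{s_i}$ converges, both as Radon measures and as integral varifolds, to a limit $\mu_\infty$ with locally bounded first variation. Testing the first-variation identity against a compactly supported $C^1$ vector field and using $\hH_{\tilde\mu_{s_i}} - \frac{\xX^\perp}{2} \to 0$ in $L^2_{\loc}$ shows $\hH_{\mu_\infty} = \frac{\xX^\perp}{2}$ $\mu_\infty$-a.e.; that is, $\mu_\infty$ is a stationary integral varifold for the expander functional $E$, i.e.\ a self-expander in the weak sense (and, by Allard regularity together with the structure theory of \cite{BWSpace}, a smooth self-expander away from a set of codimension at least $7$, hence smooth in low dimensions). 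Writing $\Sigma$ for $\supp \mu_\infty$ with its induced integer multiplicity then gives $\tilde\mu_{s_i} \to \mathcal{H}^n \llcorner \Sigma$.

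The step I expect to be the main obstacle is ruling out a loss of mass in the passage to the limit, so that $\mu_\infty$ is genuinely the \emph{integral} (and, by unit-regularity, unit-density) varifold of a self-expander rather than a weak limit with density drop. The clean way around this is to pass to the limit not at the slices $s_i$ alone but for the translated flows $\{\tilde\mu_{s_i + s}\}_{s}$: by Brakke's compactness theorem, using that $\tilde{\mathcal{M}}$ is an integral, cyclic, unit-regular Brakke flow, these subconverge to a rescaled Brakke flow $\tilde{\mathcal{M}}_\infty$, and since the integral of $\abs{\hH - \frac{\xX^\perp}{2}}^2 \wt$ over the translated time windows tends to $0$, the relative entropy $E_{\rel}[(\tilde{\mathcal{M}}_\infty)_s,\Sigma_0]$ is constant in $s$; the equality case of \cref{forward-monotonicity} then forces $\tilde{\mathcal{M}}_\infty$ to be the static flow of a self-expander, necessarily equal to $\mu_\infty$, and unit-regularity pins down the multiplicity. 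The remaining ingredients — the monotonicity bound, the varifold compactness, and transferring stationarity to the limit — are routine.
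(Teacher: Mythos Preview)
Your proposal is correct and, in its final form, is essentially the paper's argument: translate the flow by $s_i \to -\infty$, use Brakke compactness to extract an eternal rescaled Brakke flow limit, observe that the relative entropy of the limit is constant (being the monotone backward limit of $E_{\rel}[\tilde\mu_s,\Sigma_0]$, finite by trapping and \cite{BWRelativeEntropy}), and apply the equality case of \cref{forward-monotonicity} to conclude the limit is the static flow of an $E$-stationary varifold. The paper proceeds directly via this translated-flow route without your preliminary slice-level Allard compactness argument; since you yourself identify the integrality/multiplicity gap in that first pass and patch it with exactly the Brakke-compactness method, nothing is lost by omitting the detour.
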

\begin{proof}
	Fix a reference self-expander $\Sigma'$. Then $E_{\rel}[\tilde{\mu}_s, \Sigma']$ is finite by appealing to the trapping and \cite{BWRelativeEntropy}. Consider the translated flow $\tilde{\mu}_{s}^{s_0} = \tilde{\mu}_{s + s_0}$ defined on $s \in (-\infty,S-s_0)$. By compactness of Brakke flows, up to passing to a subsequence $\{s_i\}$, as $s_i \to -\infty$, the sequence of translated flows converges to a rescaled Brakke flow $\bar{\mu}$ defined on $(-\infty,\infty)$. Moreover, 
	\begin{align*}
		E_{\rel}[\bar{\mu}_s, \Sigma'] = \lim_{i \to \infty}  E_{\rel}[\tilde{\mu}_{s}^{s_i}, \Sigma'] < \infty.
	\end{align*}
	In view of the forward monotonicity formula \cref{forward-monotonicity}, we have 
	\begin{align*}
		\int_{s_1}^{s_2}\int \abs{\hH_{\bar{\mu}_s} - \frac{\xX^\perp}{2}}^2 d\bar{\mu}_s ds = 0
	\end{align*}
	for all $-\infty < s_1 < s_2 < \infty$. Hence $\bar{\mu}$ is $E$-stationary, and therefore a self-expander. Moreover as $\tilde{\mathcal{M}}$ is asymptotic to $\mathcal{C}$, so is $\bar{\mu}$.
\end{proof}
In general, however, $\Sigma$ is only an $E$-stationary varifold and can have very large singular sets even in low dimensions. This is, in some sense, the key obstruction in the correspondence between tame ancient RMCFs and MCFs coming out of $\mathcal{C}$. One way to resolve this issue is to impose a low entropy condition, which forces the expanders to be smooth. \par
Another powerful consequence of the trapping is the uniqueness of tangent flows, which is proved using the \L ojasiewicz inequality. Let $\mathcal{N}_\Sigma$ be the Euler-Lagrange operator associated with $E_{\rel}[\cdot,\Sigma]$ given by 
\begin{align*}
	\left.\frac{d}{ds}\right|_{s = 0} E_{\rel}[\Sigma_{v +sw}, \Sigma] = \inner{\mathcal{N}_\Sigma v, w}.
\end{align*}
As $0$ is a critical point for $E_{\rel}$, the operator $\mathcal{N}_\Sigma$ takes the form
\begin{align*}
	\mathcal{N}_\Sigma v = L_\Sigma v + Q(v),
\end{align*}
which, in fact, agrees with the expander mean curvature of the hypersurface $\Sigma_v$. Here we record a version of the \L ojasiewicz inequality for generic cones, which is good enough for our applications. Since we do not use the trapping assumption in the following proof, the same argument will work for $E_{\rel}^*$ if we do not know a priori that $E_{\rel}$ and $E_{\rel}^*$ agree. However, in the proof of the uniqueness of tangent flows, \cref{uniqueness-of-tangent}, trapping is necessary. \par 
\begin{thm}
	\label{lojasiewicz-relative-entropy}
	Let $\Sigma$ be a self-expanders asymptotic to a generic cone $\mathcal{C}$. There is $\eps = \eps(\Sigma)$ such that the following holds: suppose $v \in C^{2,\alpha}\cap W^2(\Sigma)$ satisfies $\norm{v}_{C^{2,\alpha}}  < \eps$, then 
	\begin{align}
	\label{lojasiewicz}		
	C\norm{\mathcal{N}_\Sigma(v)}_{W} \ge \abs{E_{\rel}[\Sigma_v,\Sigma]}^{1/2}.
	\end{align}
\end{thm}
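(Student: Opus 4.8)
\emph{The plan is to} treat this as the \L ojasiewicz inequality at a \emph{non-degenerate} critical point: genericity makes $0$ a critical point of the relative expander entropy at which the Hessian $-L_\Sigma$ is invertible, and at such a point one gets the optimal exponent $1/2$ for free, because the invertible Hessian bounds the gradient from below while the quadratic Taylor expansion of the energy bounds it from above. Crucially, this route uses no analyticity of the functional — hence no appeal to \cite{SimonParabolic} or \cite{ParkWang} — only the Fredholm theory for the drift operator $L_\Sigma$ from \cite{BWIntegerDegree, BWSpace} and the quadratic structure of the error term $Q$ from \cref{expander-mean-curvature-formula}. I will carry out the argument for $E_{\rel}^*$ (replacing $E_{\rel}$ by $E_{\rel}^*$ throughout); as the remark preceding the theorem points out, trapping is not used here, and the two entropies agree in the relevant cases (cf. \cref{entropy-equivalence}).

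\emph{Step 1 (invertibility of the Hessian).} Since $\mathcal C$ is generic, $\Sigma$ carries no nontrivial Jacobi field, so $0 \notin \mathrm{spec}(-L_\Sigma)$. By discreteness of the spectrum \cite{BWIntegerDegree} there is $\lambda_* = \lambda_*(\Sigma) > 0$ with $\mathrm{spec}(-L_\Sigma)\cap(-\lambda_*,\lambda_*) = \emptyset$, and $L_\Sigma\colon W^2(\Sigma)\to W(\Sigma)$ is bounded and invertible with bounded inverse. Hence $\norm{v}_W \le \lambda_*^{-1}\norm{L_\Sigma v}_W$ and $\norm{v}_{W^2}\le C\norm{L_\Sigma v}_W$, and, expanding $v$ in an eigenbasis, $\abs{\inner{v, L_\Sigma v}} = \abs{\sum_i \lambda_i \inner{v,\phi_i}^2} \le \lambda_*^{-1}\sum_i \lambda_i^2\inner{v,\phi_i}^2 = \lambda_*^{-1}\norm{L_\Sigma v}_W^2$. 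Integrating by parts against $\wt$ — legitimate for $v\in W^2(\Sigma)$ by the rapid decay \cref{eigenfunction-decay} and an exhaustion argument — I also get
\[
	\norm{\nabla_\Sigma v}_W^2 = -\inner{v, L_\Sigma v} + \inner{(\abs{A_\Sigma}^2 - \tfrac12)v, v} \le \lambda_*^{-1}\norm{L_\Sigma v}_W^2 + C\norm{v}_W^2 \le C\norm{L_\Sigma v}_W^2,
\]
so that $\norm{v}_{W^1}\le C\norm{L_\Sigma v}_W$.

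\emph{Step 2 (absorbing the nonlinearity, and conclusion).} By \cref{expander-mean-curvature-formula}, $\mathcal N_\Sigma v = L_\Sigma v + Q(v, \xX\cdot\nabla_\Sigma v, \nabla_\Sigma v, \nabla_\Sigma^2 v)$, and since $Q$ is quadratic with coefficients controlled by \cref{nonlinear-error-estimate}, $\norm{Q(v)}_W \le C\norm{v}_{C^{2,\alpha}}\norm{v}_{W^2} \le C\eps\norm{v}_{W^2} \le C'\eps\norm{L_\Sigma v}_W$ using Step 1; for $\eps$ small this rearranges to $\norm{L_\Sigma v}_W \le 2\norm{\mathcal N_\Sigma v}_W$. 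On the other hand, \cref{motivating-computation} (extended to $W^2$ functions by the cutoff argument of \cref{finiteness}), combined with the integration by parts of Step 1, gives $E_{\rel}^*[\Sigma_v,\Sigma] = -\tfrac12\inner{v, L_\Sigma v} + R(v)$ with $\abs{R(v)} \le C\eps\norm{v}_{W^1}^2$. Putting the spectral bound and the $W^1$ bound from Step 1 together,
\[
	\abs{E_{\rel}^*[\Sigma_v,\Sigma]} \le \tfrac12\lambda_*^{-1}\norm{L_\Sigma v}_W^2 + C\eps\norm{v}_{W^1}^2 \le C\norm{L_\Sigma v}_W^2 \le 4C\norm{\mathcal N_\Sigma v}_W^2,
\]
and taking square roots yields \cref{lojasiewicz}.

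\emph{Where the difficulty lies.} The only genuinely non-formal input is the quantitative elliptic estimate $\norm{v}_{W^2}\le C\norm{L_\Sigma v}_W$ of Step 1, in the weighted spaces on the non-compact $\Sigma$; this rests on the Fredholm theory for $L_\Sigma$ with the growing Gaussian weight developed in \cite{BWIntegerDegree, BWSpace}. If one wants a self-contained derivation, the $W^2$ bound can instead be obtained by passing to the unrescaled flow $\Sigma_t = \sqrt{t}\,\Sigma$ and applying interior $L^2$ elliptic estimates on unit balls, exactly as in the proof of \cref{schauder-estimate}, then summing against the weight $\wt$. A secondary technical point is the legitimacy of integration by parts against $\wt$ for non-compactly supported $v\in W^2(\Sigma)$, which again follows from \cref{eigenfunction-decay} by exhaustion.
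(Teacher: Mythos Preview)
Your proposal is correct and follows essentially the same approach as the paper: both arguments use genericity to make $L_\Sigma$ invertible on the weighted Sobolev spaces, absorb the quadratic remainder $Q$ into the linear part to get $\norm{L_\Sigma v}_W \le C\norm{\mathcal N_\Sigma v}_W$, and then combine this with the quadratic expansion \cref{motivating-computation} of the relative entropy. The only cosmetic difference is that the paper obtains the key estimate $\norm{v}_{W^2}\le C\norm{L_\Sigma v}_W$ by first invoking that $\mathcal L_\Sigma = L_\Sigma - \abs{A_\Sigma}^2$ is an isomorphism \cite[Proposition 3.4]{BWIntegerDegree} and then using the spectral gap to absorb the zeroth-order term, whereas you assert invertibility of $L_\Sigma$ directly; one small caution is that your appeal to \cref{eigenfunction-decay} for the integration by parts is slightly misplaced (that estimate is for eigenfunctions, not general $v\in W^2$), though the conclusion is still valid by a standard cutoff/exhaustion argument.
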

\begin{proof}
Let $\mathcal{L}_\Sigma$ denote the operator
\begin{align*}
	\mathcal{L}_\Sigma v = \Delta_\Sigma v + \frac{1}{2}\xX \cdot \nabla_\Sigma v - \frac{1}{2}v = L_\Sigma v - \abs{A_\Sigma}^2 v.
\end{align*}
By \cite[Proposition 3.4]{BWIntegerDegree}, $\mathcal{L}_\Sigma $ is an isomorphism between $W^2$ and $W$, so there exists a constant $C = C(\Sigma)> 0$ such that
\begin{align*}
	\norm{v}_{W^2} \le C\norm{\mathcal{L}_\Sigma v}_W \le C\norm{L_\Sigma v}_W + CC'\norm{v}_W,
\end{align*}
where $C' = C'(\Sigma) >0$ depends only the bound on $\abs{A_\Sigma}^2$. As $\mathcal{C}$ is generic, we have 
\begin{align*}
	\norm{L_\Sigma v}_W \ge c \norm{v}_W
\end{align*}
for some constant $c > 0$ depending on the spectral gap $\bar{\lambda} = \min\{\abs{\lambda_i}\} > 0$. Thus 
\begin{align*}
	\norm{v}_{W^2} \le C\norm{L_\Sigma v}_W + c^{-1}CC' \norm{L_\Sigma v}_W \implies C_\Sigma\norm{v}_{W^2} \le \norm{L_\Sigma v}_W,
\end{align*}
where the constant $C_\Sigma > 0$ in the last inequality depends only on $\Sigma$. Using the expansion \cref{asymptotic-expansion-q}, there is $\eps > 0$ such that when $\norm{v}_{C^{2,\alpha}} < \eps$, we have
\begin{align*}
	\norm{Q(v,\nabla_\Sigma v, \xX \cdot \nabla_\Sigma v,\nabla_\Sigma^2 v)}_W \le \frac{1}{2} C_\Sigma \norm{v}_{W^1} \le \frac{1}{2} C_\Sigma\norm{v}_{W^2}.
\end{align*}
Hence it follows from the triangle inequality that
\begin{align}
\label{eq-5-4}
	\norm{\mathcal{N}_\Sigma v}_W = \norm{L_\Sigma v + Q(v)}_W \ge \frac{1}{2} C_\Sigma \norm{v}_{W^2}
\end{align}
On the other hand, \cref{motivating-computation} implies that
\begin{align}
\label{eq-5-5}
	\abs{E_{\rel}[\Sigma_v,\Sigma]} \le C_0 \norm{v}_{W^1}^2 \le C_0\norm{v}_{W^2}^2
\end{align}
where $C_0 = C_0(\Sigma) > 0$. Combining \cref{eq-5-4} and \cref{eq-5-5}, we see that
\begin{align*}
	4C_0C_\Sigma^{-2} \norm{\mathcal{N}_\Sigma v}^2_W \ge C_0 \norm{v}_{W^2}^2 \ge \abs{E_{\rel}[\Sigma_v,\Sigma]}
\end{align*}
holds whenever $\norm{v}_{C^{2,\alpha}} < \eps$.
\end{proof}
\begin{rem}
	Here we shall explain that the exponent $\gamma = 1$ in \cref{better-lojasiewicz} is the best possible by showing \cref{lojasiewicz} implies \cref{better-lojasiewicz} when $\Sigma_v$ is trapped. It is enough to show
	\begin{align}
		\label{eq-5-2}
		\norm{\mathcal{N}_\Sigma(v)}_W^{\gamma_2} \le C^{\gamma_2 - \gamma_1} \norm{\mathcal{N}_\Sigma(v)}_W^{\gamma_1},
	\end{align}
	whenever $\gamma_1 \le \gamma_2$ and $\norm{v}_{C^{2,\alpha}} < \eps$ for $\eps$ sufficiently small. We point out that, in the following argument, the trapping assumption is also essential. \par 
	As $\mathcal{L}_\Sigma$  is an isomorphism between $W^2$ and $W$,
	\begin{align*}
		\norm{L_\Sigma v}_W^2 \le C \norm{v}_{W^2}^2.
	\end{align*}
	 Together with the expansion of $Q(v)$, \cref{asymptotic-expansion-q}, we conclude that 
	\begin{align*}
		\norm{\mathcal{N}_\Sigma(v)}_W^2 \le C \norm{v}_{W^2}^2.
	\end{align*}
	Given $\delta > 0$, we can choose $R_1 > R_0$ depending on $\delta$ such that
	\begin{align*}
		\int_{\Sigma \setminus B_{R_1}(0)} \abs{v}^2 \wt \le \frac{\delta}{2},
	\end{align*}
	where we used the sharp decay rate of $v$ in \cref{trapped-decay}. As $\Sigma \cap B_{R_1}(0)$ is compact, we can find $\eps = \eps(\delta, \Sigma)$ such that
	\begin{align*}
		\int_\Sigma \abs{v}^2 \wt = \int_{\Sigma \cap B_{R_1}(0)} \abs{v}^2 \wt + \int_{\Sigma \cap B_{R_1}(0)} \abs{v}^2 \wt \le \delta
	\end{align*}
	whenever $\norm{v}_{C^{2,\alpha}} < \eps$. A similar argument shows the same for $\norm{v}_{W^2}$ (possibly shrinking $\eps$).  This means that we can guarantee 
	\begin{align*}
		\norm{\mathcal{N}_\Sigma(v)}_W^2 \le C\delta^2
	\end{align*}
	whenever $\norm{v}_{C^{2,\alpha}} < \eps$. \cref{eq-5-2} immediately follows (by setting $\delta = 1$, for example). 
\end{rem}
Using \cref{lojasiewicz-relative-entropy} we can upgrade subsequential convergence to full convergence in the generic case. Combining \cref{uniqueness-of-tangent} and \cref{main-uniqueness-theorem} gives \cref{main-cor-mcf}. Note that we only require the limit to be smooth for one subsequence in the following.
\begin{cor}
	\label{uniqueness-of-tangent}
	Let $\mathcal{M}$ and $\Sigma$ be as above. If $\Sigma$ is smooth, then
	\begin{align*}
		\lim_{s \to -\infty} \tilde{\mu}_s = \mathcal{H}^n \llcorner \Sigma.
	\end{align*}
\end{cor}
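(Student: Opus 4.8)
The plan is to run the classical \L ojasiewicz--Simon scheme, which is now available thanks to the \L ojasiewicz inequality \cref{lojasiewicz-relative-entropy}. First I would record the monotonicity set-up: fix a reference self-expander (say one of the two stable self-expanders that trap $\mathcal{M}$), so that by the trapping and \cite{BWRelativeEntropy} the quantity $E_{\rel}[\tilde\mu_s,\Sigma]$ is finite for every $s$ and the forward monotonicity \cref{forward-monotonicity} applies; in particular $s \mapsto E_{\rel}[\tilde\mu_s,\Sigma]$ is non-increasing. Along the subsequence $s_i \to -\infty$ we have $\tilde\mu_{s_i} \to \mathcal{H}^n \llcorner \Sigma$ with multiplicity one and $\Sigma$ smooth, so by the Brakke regularity theorem this convergence is smooth on compact sets, and \cref{motivating-computation} gives $E_{\rel}[\tilde\mu_{s_i},\Sigma] \to E_{\rel}[\Sigma,\Sigma] = 0$. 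Combining this with monotonicity shows that the full limit $\lim_{s\to-\infty} E_{\rel}[\tilde\mu_s,\Sigma]$ exists and equals $0$, and hence $E_{\rel}[\tilde\mu_s,\Sigma] \le 0$ for all $s$.

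Next I would reduce to a graphical problem. For $i$ large, Brakke regularity together with the barrier and pseudolocality estimates behind \cref{properties-of-the-flow} let us write $\tilde\mu_{s_i} = \Sigma_{v_i}$ for a smooth entire graph $v_i$ over $\Sigma$ with $\norm{v_i}_{C^{2,\alpha}}$ as small as we wish, and, since $\Sigma_{v_i}$ is trapped between the two stable self-expanders, $v_i$ enjoys the sharp decay \cref{trapped-decay}, so $v_i \in W^2(\Sigma)$. Fix the threshold $\eps = \eps(\Sigma)$ from \cref{lojasiewicz-relative-entropy}, choose $i$ with $\norm{v_i}_{C^{2,\alpha}} < \eps/2$ and $\abs{E_{\rel}[\Sigma_{v_i},\Sigma]} < \delta$ ($\delta$ to be fixed), and let $[s_i - T, s_i]$ be the maximal interval on which $\tilde\mu_s = \Sigma_{v(\cdot,s)}$ is a smooth entire graph with $\norm{v(\cdot,s)}_{C^{2,\alpha}} < \eps$; this interval is non-degenerate by Brakke regularity and short-time parabolic estimates. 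On it $v$ solves \cref{nonlinear-problem}, and the gradient-flow structure of \cref{rescaled-mcf-equation} with respect to $E_{\rel}$ gives $\frac{d}{ds} E_{\rel}[\tilde\mu_s,\Sigma] = -\int \abs{\hH_{\tilde\mu_s} - \frac{\xX^\perp}{2}}^2 \wt\, d\tilde\mu_s$, which in the graphical regime is comparable to $-\norm{\mathcal{N}_\Sigma v(\cdot,s)}_W^2$. Writing $\tau = s_i - s$ and $g(\tau) = -E_{\rel}[\tilde\mu_{s_i - \tau},\Sigma] \ge 0$, this reads $g'(\tau) \le -c\,\norm{\mathcal{N}_\Sigma v}_W^2$; feeding in \cref{lojasiewicz} gives simultaneously $g'(\tau) \le -c'\, g(\tau)$ and $-\frac{d}{d\tau}\left(2 g(\tau)^{1/2}\right) \ge c''\, \norm{\partial_\tau v(\cdot,\tau)}_W$. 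The first yields the exponential decay $g(\tau) \le g(0) e^{-c'\tau}$, and integrating the second gives $\int_0^T \norm{\partial_\tau v(\cdot,\tau)}_W\, d\tau \le C g(0)^{1/2} \le C\delta^{1/2}$.

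I would then close the no-escape loop: the bound $\int_0^T \norm{\partial_\tau v}_W \le C\delta^{1/2}$ controls $\norm{v(\cdot,s) - v_i}_W$ by $C\delta^{1/2}$, and interior parabolic Schauder estimates on compact pieces (as in \cref{main-schauder-estimates}) together with the uniform decay at infinity supplied by \cref{properties-of-the-flow} turn this into $\norm{v(\cdot,s) - v_i}_{C^{2,\alpha}} \le C\delta^{1/2}$; choosing $\delta$ small therefore keeps $\norm{v(\cdot,s)}_{C^{2,\alpha}} < \eps$ strictly, contradicting maximality unless $T = \infty$. Hence the flow is an entire small graph over $\Sigma$ on all of $(-\infty, s_i]$, the displacement estimate holds with $T = \infty$, so $v(\cdot,s)$ is $W$-Cauchy as $s \to -\infty$ and converges; since $v(\cdot,s_j) \to 0$ along the subsequence (for $j$ large, $s_j < s_i$), the limit is $0$, and upgrading via Schauder estimates and \cref{trapped-decay} gives $\tilde\mu_s \to \mathcal{H}^n \llcorner \Sigma$.

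The crux is the no-escape step --- ruling out that the flow drifts away from the small $C^{2,\alpha}$-neighborhood of $\Sigma$ as $s \to -\infty$. This is exactly where the trapping of $\mathcal{M}$ is indispensable: it is what makes $E_{\rel}$ finite in \cref{forward-monotonicity} and what forces the graph functions to have the sharp spatial decay \cref{trapped-decay} placing them in $W^2$, where \cref{lojasiewicz-relative-entropy} can be invoked. The technical care lies in matching the $W$-displacement estimate coming from \L ojasiewicz with the interior Schauder estimates on compact regions and the pseudolocality estimate of \cref{properties-of-the-flow} on the non-compact end, so that the smallness of $\norm{v}_W$ genuinely propagates to smallness of $\norm{v}_{C^{2,\alpha}}$ uniformly out to infinity.
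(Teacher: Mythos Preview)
Your proposal is correct and follows essentially the same \L ojasiewicz--Simon scheme as the paper. The only cosmetic differences are that the paper packages the no-escape step via Schulze's extension lemma \cite[Lemma 2.2]{Schulze} and an explicit $\log 2$-iteration rather than your maximal-interval argument, and identifies the backward limit as $\Sigma$ via unique continuation \cite[Theorem 1.4]{Bernstein} rather than by appealing directly to the subsequential convergence; both pairs of arguments are interchangeable here.
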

\begin{proof}
	The proof is standard (see eg. \cite{Schulze} or \cite[Chapter 3]{SimonETH}), and we present it for the sake of completeness. For any function $v(\cdot,s) \in C^{2,\alpha} \cap W^2$ with $\norm{v(\cdot,s)}_{C^{2,\alpha}} < \eps$ sufficiently small, we compute, using the forward monotonicity formula \cref{forward-monotonicity}:
	\begin{align*}
		\frac{d}{ds}E_{\rel}[\Sigma_{v(\cdot,s)}, \Sigma] &= -\int_{\Sigma_{v(\cdot,s)}} \abs{\hH_{\Sigma_{v(\cdot,s)}} - \frac{\xX^\perp}{2}}^2 \wt d\mathcal{H}^n \\
		& \le -\left( \int_\Sigma \abs{\mathcal{N}_\Sigma v}^2 \right)^{1/2} \left(\int_\Sigma \abs{\frac{\partial v}{\partial s}}^2 e^{\frac{\abs{\xX + v(x)\nN_\Sigma}^2}{4}} \mathrm{Jac}(v)\right)^{1/2} \\
		& \le C \norm{\mathcal{N}_\Sigma v}_{W} \norm{\frac{\partial v}{\partial s}}_W,
	\end{align*}
	where $C = C(\Sigma)$. Hence, by \cref{lojasiewicz-relative-entropy},
	\begin{align*}
		-\frac{d}{ds} \abs{E_{\rel}[\Sigma_{v(\cdot, s)}, \Sigma]}^{1/2} \ge \frac{C}{2} \abs{E_{\rel}[\Sigma_{v(\cdot,s)},\Sigma]}^{-1/2} \norm{\mathcal{N}_\Sigma v}_W \norm{\frac{\partial v}{\partial s}}_{W} \ge C \norm{\frac{\partial v}{\partial s}}_W
	\end{align*}
	Integrating the above gives, for $s_2 \ge s_1$
	\begin{align}
	\label{integral-bound}
		\int_{s_1}^{s_2} \norm{\frac{\partial v}{\partial s}}_W ds \le C_0 \abs{E_{\rel}[\Sigma_{v(\cdot, s_1)}, \Sigma]}^{1/2},
	\end{align}
	and therefore
	\begin{align}
	\label{eq-5-3}
		\norm{v(\cdot,s_2)}_{W} \le \norm{v(\cdot,s_1)}_W + C_0 \abs{E_{\rel}[\Sigma_{v(\cdot,s_1)},\Sigma]}^{1/2}.
	\end{align}
	for $C_0 = C_0(\Sigma)$. \par
	Let $\eps_1 > 0$ be such that the nearest point projection onto $\Sigma$ is smooth in a tubular neighborhood of radius $\eps_1$. Let $\delta$ be such that the extension lemma \cite[Lemma 2.2]{Schulze} holds with $\beta = \frac{1}{2}$ and $\sigma = \eps_1$ (strictly speaking we need a slightly modified version with $L^2$ norm replaced by the $W$ norm). Choose $\eps_0 = \eps_0(\delta, \Sigma)$ such that $\norm{v}_{C^{2,\alpha}} < \eps_0$ implies $\norm{v}_{W^2} \le \delta/(3C_0)$ (as $\mathcal{M}$ is trapped) and 
	\begin{align*}
		C_0\abs{E_{\rel}[\Sigma_v,\Sigma]}^{\frac{1}{2}} \le 2C_0\norm{v}_{W^2} \le \frac{2\delta}{3}
	\end{align*}
	in view of \cref{motivating-computation}. Now let $s_i \to -\infty$ be a convergent subsequence, i.e. 
	\begin{align*}
		\lim_{i \to \infty} \tilde{\mu}_s = \mathcal{H}^n \llcorner \Sigma.
	\end{align*}
	As $\Sigma$ is a smooth self-expander, by Brakke regularity theorem \cite{Brakke}, for each $i$ there is $\eta_i$ such that $\tilde{\mu}_s$ converges to the static flow of $\Sigma$ on $(s_i - \eta_i, s_i + \eta_i)$. By subsequential convergence, given $\eps > 0$ we may assume $i$ is taken large enough so that the flow can be written as a normal graph $v$ over $\Sigma$ with $\norm{v(\cdot, s)}_{C^{2,\alpha}} < \eps_0$ on $(s_i - \eta_i, s_i + \eta_i)$. By the interior estimates of Ecker--Huisken \cite{EckerHuisken} we may assume $\eta_i > 1$ when $i$ is sufficiently large. \par 
	Fix an $i$ such that all of the above is satisfied. \cref{eq-5-3} implies that
	\begin{align*}
		\norm{v(\cdot,s_i - \log(2))}_{W} \le \norm{v(\cdot,s_i)}_W + C_0\abs{E_{\rel}[\Sigma_{v(\cdot,s_i)},\Sigma]}^{1/2} < \delta.
	\end{align*}
	Applying the extension lemma \cite[Lemma 2.2]{Schulze}, we get that $v$ can be extended to a solution $v$ to \cref{nonlinear-problem} on $(s_i - 2\log(2),s_i + \log(2))$ with $\norm{v}_{C^{2,\alpha}} < \eps_1$. Iterating the above using the forward monotonicity formula \cref{forward-monotonicity} (note that $E_{\rel}$ in our case is negative and decreasing from $-\infty$, so $\abs{E_{\rel}}$ is increasing from $-\infty$), we get a solution $v$ to \cref{nonlinear-problem} on $(-\infty, s_i)$ with $\norm{v}_{C^{2,\alpha} }< \eps_1$. By \cref{integral-bound}, we have $v \in W(\Sigma \times (-\infty, s_i))$. Hence
	\begin{align*}
		\lim_{s \to -\infty} \norm{\frac{\partial v}{\partial s}(\cdot, s)}_W = 0.
	\end{align*}
	Arguing as before using interior Schauder estimates together with \cref{schauder-estimate} shows that after a suitable time translation, $v$ converges backwards to a (smooth) static solution $\Sigma'$ to the RMCF that is graphical over $\Sigma$. As $\mathcal{M}$ is trapped, unique continuation \cite[Theorem 1.4]{Bernstein} implies that $\Sigma' = \Sigma$, as desired.
  \end{proof}
\begin{rem}
	As a byproduct of \cref{lojasiewicz-relative-entropy}, we can determine the rate of decay of the flow to the static solution, similar to the ODE case. Indeed, the rate (in the rescaled setting) is exponential if \cref{lojasiewicz} holds and polynomial depending on $\gamma$ if \cref{better-lojasiewicz} holds.
\end{rem}
In $\R^3$ and $\R^4$, under suitable low entropy conditions, we can guarantee the smoothness of the self-expanders that arise as blow up limits of smooth MCFs (note that singular self-expanders do exist, but they do not arise as blow-up limits). Consequently \cref{main-cor-mcf} gives a complete picture of MCFs coming out of a generic cone of low entropy. 
\begin{cor}
\label{cor-r3}
Suppose $\mathcal{C} \subset \R^3$ is a generic cone with $\lambda[\mathcal{C}] < 2$, then any smooth MCF coming out of $\mathcal{C}$ is either a (smooth) self-expander or a tame ancient RMCF starting from a self-expander $\Sigma$ asymptotic to $\mathcal{C}$, as constructed in \cref{main-theorem}.
\end{cor}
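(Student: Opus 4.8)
The plan is to reduce \cref{cor-r3} to \cref{main-cor-mcf} by showing that the backward subsequential blow-up limit supplied by \cref{backward-limit} is in fact a \emph{smooth} self-expander. Let $\mathcal{M} = \{\Sigma_t\}_{t \in (0,T]}$ be a smooth MCF coming out of $\mathcal{C}$ and let $\tilde{\mathcal{M}} = \{\tilde\mu_s\}$ denote the corresponding rescaled Brakke flow. Since $n = 2 \le 6$, \cite[Theorem 8.21]{CCMSGeneric} applies, so $\mathcal{M}$ is trapped between two smooth stable self-expanders asymptotic to $\mathcal{C}$; in particular \cref{backward-limit} produces a sequence $s_i \to -\infty$ and an $E$-stationary integral varifold $\Sigma$, asymptotic to $\mathcal{C}$, with $\tilde\mu_{s_i} \to \mathcal{H}^2 \llcorner \Sigma$. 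Being a limit of time-translates of the unit-regular, cyclic rescaled Brakke flow $\tilde{\mathcal{M}}$, the associated flow $\{\sqrt{t}\,\Sigma\}_{t>0}$ is again cyclic and unit-regular. It therefore remains only to prove that $\Sigma$ is smooth; granting this, $\mathcal{C}$ being generic allows us to invoke \cref{main-cor-mcf}, which identifies $\mathcal{M}$ with one of the tame ancient RMCFs of \cref{main-theorem} --- the static flow $\{\sqrt{t}\,\Sigma\}$, i.e.\ a smooth self-expander, in the degenerate case, and a genuinely ancient one otherwise.

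The first step is to bound the entropy of $\Sigma$. Since $\mathcal{M}$ comes out of $\mathcal{C}$, for any fixed point $x_0 \in \R^{3}$ and scale $\sigma > 0$ we apply Huisken's monotonicity formula to $\mathcal{M}$ based at the spacetime point $(x_0, t+\sigma)$: the Gaussian density ratio is nonincreasing in $t$, and as $t \to 0^+$ it converges to the Gaussian integral of $\mathcal{C}$ at scale $t+\sigma$, which is at most $\lambda[\mathcal{C}]$. Hence every $F_{x_0,\sigma}[\mu_t] \le \lambda[\mathcal{C}]$, so by the Colding--Minicozzi characterization of entropy $\lambda[\mu_t] \le \lambda[\mathcal{C}]$ for all $t$. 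As entropy is scale invariant, $\lambda[\tilde\mu_{s_i}] \le \lambda[\mathcal{C}]$, and lower semicontinuity of entropy under varifold convergence gives $\lambda[\Sigma] \le \lambda[\mathcal{C}] < 2$.

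The crux is the regularity of $\Sigma$. Suppose for contradiction that $p \in \Sigma$ is a singular point. Since the weight $\wt$ is smooth and nonvanishing near $p$, a blow-up of $\Sigma$ at $p$ is a nonzero integral cone $C_p$ in $\R^{3}$ that is stationary (for the flat metric), inherits cyclicity, and satisfies $\Theta(C_p,0) = \Theta(\Sigma,p)$. The link $C_p \cap \mathbb{S}^{2}$ is then a cyclic integral stationary $1$-varifold in $\mathbb{S}^{2}$; by the structure theory of such objects it is a sum of closed geodesics, so $C_p$ is a union of planes through the origin and $\Theta(C_p,0) \in \{1,2,3,\dots\}$. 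If $\Theta(C_p,0) = 1$ then $C_p$ is a multiplicity-one plane, and Allard's regularity theorem (equivalently, unit-regularity of the flow $\{\sqrt{t}\,\Sigma\}$) forces $p$ to be a regular point --- a contradiction. Hence $\Theta(\Sigma,p) = \Theta(C_p,0) \ge 2$, and therefore $\lambda[\Sigma] \ge \Theta(\Sigma,p) \ge 2$, contradicting the entropy bound from the previous step. Thus $\Sigma$ is a smooth self-expander asymptotic to $\mathcal{C}$, and \cref{main-cor-mcf} applies to conclude.

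The main obstacle is this last step: ruling out singular points of $\Sigma$ hinges on excluding low-density singular models in $\R^{3}$, which rests on the classification of cyclic integral stationary cones there (equivalently, that cyclic integral stationary geodesic networks in $\mathbb{S}^{2}$ are unions of great circles, so that the triple-junction cone of density $3/2$ is disallowed) together with the fact that cyclicity and unit-regularity pass to the blow-up limit $\Sigma$ and to its tangent cones. The entropy monotonicity in the second step is routine, and once smoothness is established the invocation of \cref{main-cor-mcf} is immediate.
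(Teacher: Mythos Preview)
Your approach is the same as the paper's: reduce to \cref{main-cor-mcf} by showing the subsequential backward limit $\Sigma$ is smooth, and do this by proving every tangent cone at a singular point of $\Sigma$ is a multiplicity-one plane. The paper phrases this via an iterated tangent cone (blow up $\nu$ at a non-vertex point to get $\nu'' \times \R$, classify $\nu''$), whereas you work directly on the link in $\mathbb{S}^2$; these are equivalent viewpoints.

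There is one genuine gap. Your claim that a cyclic integral stationary $1$-varifold in $\mathbb{S}^2$ is automatically a sum of great circles is false without the entropy bound: take three meridians from the north to the south pole at longitudes $0,\tfrac{2\pi}{3},\tfrac{4\pi}{3}$, each with multiplicity $2$. This is stationary (the six tangent directions at each pole balance), cyclic (the associated mod~$2$ chain vanishes), yet no pair of these meridians completes to a great circle, so it is not a sum of closed geodesics. What you must do --- and what the paper does --- is feed the entropy bound into this step: since $\lambda[\Sigma]<2$, every point of the link has density $<2$, so each vertex of the geodesic network has valence at most $3$; cyclicity then forces valence $\le 2$, hence the link is smooth and therefore a (single, multiplicity-one) great circle. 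Once you insert the entropy bound here rather than only at the final density dichotomy, your argument is complete and matches the paper's.
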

\begin{proof}
	It suffices to establish that any blow up limit of such flow is automatically smooth. This argument is essentially carried out in \cite[Lemma 3.1]{CHHAncient} (cf. \cite[Lemma 4.1]{BWSharp}). Let $\Sigma$ be a subsequential limit from \cref{subsequent-convergence} and let $p \in \mathrm{sing}(\Sigma)$. By lower-semicontinuity of the entropy, any tangent cone $\nu$ at $p$ is a stationary cone with entropy less than 2. We claim that any such cone must be flat. Take an iterated tangent cone $\nu'$ at a singular point $q \in \supp \nu$ (that is not the vertex). $\nu'$ is then a 2-dimension stationary cone with entropy at most 2 that splits off a line, which we write as $\nu' = \nu'' \times \mu_\R$ for some 1-dimensional stationary cone $\nu''$. Any such $\nu''$ is a union of rays. Since the entropy of $\nu''$ is less than 2, $\nu''$ is either a flat line or a triple junction. However, as the MCF is smooth, it is cyclic as a mod 2 flat chain, which cannot encounter any singularity modeled on a triple junction by works of White \cite{WhiteCurrentsChains}. Hence $\nu''$ is a flat line, and $\nu'$ is flat. This implies $\nu$ is smooth away from the origin. Since $\nu$ is stationary and has entropy less than 2, $\supp \nu \cap \mathbb{S}^2$ is a closed geodesic of $\mathbb{S}^2$, which must be a multiplicity 1 great circle. This implies that $\nu$ is flat and $\Sigma$ is smooth by Allard regularity.
\end{proof}
\begin{cor}
\label{cor-r4}
Suppose $\mathcal{C} \subset \R^4$ is a generic cone with $\lambda[\mathcal{C}] < \frac{\pi}{2}$, then any smooth MCF coming out of $\mathcal{C}$ is either a (smooth) self-expander or a tame ancient RMCF starting from a self-expander $\Sigma$ asymptotic to $\mathcal{C}$, as constructed in \cref{main-theorem}.
\end{cor}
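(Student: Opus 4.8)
The plan is to follow the proof of \cref{cor-r3} almost verbatim, with $\R^3$ replaced by $\R^4$, so that everything again reduces to the classification of low-entropy stationary cones; the only genuinely new ingredient is the resolution of the Willmore conjecture. By \cref{main-cor-mcf} (together with \cref{uniqueness-of-tangent}, which promotes subsequential to full convergence), it suffices to show that the $E$-stationary subsequential limit $\Sigma$ produced by \cref{backward-limit} is smooth, for then $\mathcal{M}$ is either the static flow of $\Sigma$ or one of the tame ancient RMCFs from \cref{main-theorem}. As recorded in the proof of \cref{backward-limit}, $\Sigma$ is asymptotically conical (hence smooth outside a compact set), satisfies $\hH_\Sigma = \frac{\xX^\perp}{2}$ in the varifold sense, and (being a flat limit of the cyclic flow $\tilde{\mu}_{s_i}$) is cyclic mod $2$; moreover, by lower semicontinuity of the entropy and Huisken's monotonicity formula along $\mathcal{M}$, $\lambda[\Sigma] \le \lambda[\mathcal{C}] < \pi/2$. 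Since the weight $\wt$ is smooth with vanishing gradient at the origin, every tangent cone $\nu$ of $\Sigma$ at every point (the origin included) is a stationary, cyclic-mod-$2$, integral $3$-cone in $\R^4$ with $\lambda[\nu] \le \lambda[\Sigma] < \pi/2$, and by Allard's regularity theorem it is enough to prove that every such $\nu$ is a multiplicity-one hyperplane.

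I would establish this by Federer dimension reduction, exactly as in \cref{cor-r3}. Suppose $\nu$ were singular at some $q \ne 0$. Its tangent cone at $q$ splits off the line $\R q$, hence equals $\nu_1 \times \R$ for a stationary, cyclic-mod-$2$, integral $2$-cone $\nu_1 \subset \R^3$ of entropy $< \pi/2$ that is not a plane. If $\nu_1$ is itself singular away from its vertex, iterating once more produces $\nu_2 \times \R$ with $\nu_2 \subset \R^2$ a $1$-dimensional stationary cone, i.e. a balanced union of $k$ rays, of entropy $k/2 < \pi/2$; thus $k = 2$ (a line, contradicting singularity) or $k = 3$, a triple junction, which is excluded for cyclic-mod-$2$ chains by White \cite{WhiteCurrentsChains}. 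Hence $\nu_1$ is smooth away from its vertex, so its link is a closed geodesic of $\mathbb{S}^2$, that is, a great circle with some multiplicity $m$; since $m = \lambda[\nu_1] < 2$ we get $m = 1$ and $\nu_1$ is a plane, contradicting nonflatness. Therefore $\mathrm{sing}(\nu) = \{0\}$.

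It remains to treat the case $\mathrm{sing}(\nu) = \{0\}$. Then the link $L = \supp \nu \cap \mathbb{S}^3$ is a closed minimal surface, $\nu$ is the multiplicity-$m$ cone over it for some integer $m \ge 1$, and $\lambda[\nu] = \frac{m\abs{L}}{4\pi}$. If $m \ge 2$ then $\lambda[\nu] \ge 2 > \pi/2$; and since $\lambda[\nu] < 2$, White's regularity forces the regular part of $\nu$ to be embedded with multiplicity one, while a disconnected $L$ would force $\abs{L} \ge 8\pi > 2\pi^2$, so $L$ is a connected, closed, embedded minimal surface in $\mathbb{S}^3$ with $\abs{L} = 4\pi\lambda[\nu] < 2\pi^2$. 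By Marques--Neves' resolution of the Willmore conjecture \cite{MarquesNeves}, the equatorial $\mathbb{S}^2$ is the unique closed embedded minimal surface in $\mathbb{S}^3$ of area strictly below $2\pi^2$ (the Clifford torus, of area $2\pi^2$, realizing the first gap), so $L$ is a great $2$-sphere and $\nu$ is a multiplicity-one hyperplane, contradicting that $\nu$ was a tangent cone at a singular point. Hence $\Sigma$ has no singular points, is smooth, and \cref{main-cor-mcf} applies. As in \cref{cor-r3}, the main obstacle is not the flow-theoretic part but the cone classification: one must verify that the threshold $\pi/2$ together with the cyclic-mod-$2$ structure really does exclude everything but the hyperplane, in particular the triple junctions (which the entropy bound $3/2 < \pi/2$ alone does not rule out) and any immersed or higher-multiplicity link that would otherwise slip past the embedded area gap of Marques--Neves.
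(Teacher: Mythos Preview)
Your proof is correct and follows essentially the same approach as the paper, which simply says the argument of \cref{cor-r3} carries over with \cite[Lemma 4.2]{CHHW} in place of \cite[Lemma 3.1]{CHHAncient} and then invokes Marques--Neves. You have written out in full the Federer dimension reduction, the exclusion of triple junctions via the cyclic-mod-$2$ structure, and the embeddedness/connectedness of the link needed to apply the Willmore gap---details that the paper leaves implicit in its reference to \cite{CHHW}---but the structure and key ingredients are identical.
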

\begin{proof}
	The proof is similar except one uses \cite[Lemma 4.2]{CHHW} instead of \cite[Lemma 3.1]{CHHAncient}. Essentially, the same argument follows through until the conclusion $\supp \nu \cap \mathbb{S}^3$ is a closed smooth minimal surface in $\mathbb{S}^3$. It follows from the resolution of Willmore conjecture \cite{MarquesNeves} that $\supp \nu \cap \mathbb{S}^3$ must be an equatorial sphere as any other such minimal surface has (Gaussian) area ratio at least  $\frac{2\pi^2}{4\pi} = \frac{\pi}{2}$. Hence $\nu$ is flat and $\Sigma$ is smooth.
\end{proof}
\appendix

\section{The ODE Lemma}
\label{ode-appendix}
\begin{lem}[Lemma B.1 in \cite{ChoiMantoulidis}]
\label{ode-lemma}
	Suppose $x,y,z: (-\infty,0] \to [0,\infty)$ are absolutely continuous functions such that $x + y + z > 0$ and 
	\begin{align*}
		\liminf_{s \to -\infty} y(s) = 0.
	\end{align*}
	If there is $\eps > 0$ such that $x, y, z$ satisfy the following system of differential inequalities
	\begin{gather*}
		\abs{x'} \le \eps(x+y+z), \\
		y' + y \le \eps(x+z), \\
		z' - z \ge -\eps(x+y). 
	\end{gather*}
	Then there exists $\eps_0 > 0$ such that if $\eps \le \eps_0$, $y \le 2\eps(x+z)$. Moreover, either there exists $-\infty < s_* \le 0$ such that $z \le 8\eps x$ on $(-\infty,s_*)$ or $x \le c\eps z$ on $(-\infty,0]$ for some $c$ depending on $\eps_0$.
\end{lem}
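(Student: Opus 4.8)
The plan is to run a Merle--Zaag type dichotomy using two ``barrier'' functions, $\phi := y - 2\eps(x+z)$ and $w := z - 8\eps x$. For each I will show that on the (open) region where the barrier is positive its derivative has a definite sign, which forces that region to be a half-line; the hypothesis $\liminf_{s\to-\infty}y = 0$ together with nonnegativity of $x,y,z$ then yields either a contradiction or the desired domination. Throughout, $\eps_0$ is a small universal constant (say $\eps_0 = 1/16$), and the $c$ in the statement may be taken to be $8$.

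\emph{Step 1 ($y \le 2\eps(x+z)$).} Suppose $\phi(s_0) > 0$ for some $s_0 \le 0$. Wherever $\phi > 0$ we have $x+z < y/(2\eps)$, and the three hypotheses then give, for $\eps \le \eps_0$: $y' \le -y + \eps(x+z) < -y/2$; $|x'| \le \eps(x+y+z) < y$, so $2\eps|x'| < 2\eps y$; and --- using only the one-sided bound $z' \ge z - \eps(x+y) \ge -\eps(x+y) > -y$ --- also $-2\eps z' < 2\eps y$. Summing, $\phi' = y' - 2\eps x' - 2\eps z' < (4\eps - \tfrac12)y < 0$ on $\{\phi > 0\}$. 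Hence, going backward from $s_0$, $\phi$ cannot return to $0$ (at a first such zero it would have to be increasing), so $\phi > 0$ and thus $y' < -y/2$ on all of $(-\infty,s_0]$; integrating, $y(s) \ge y(s_0)e^{(s_0-s)/2} \to \infty$ as $s \to -\infty$, contradicting $\liminf_{s\to-\infty}y=0$. Therefore $\phi \le 0$ throughout.

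\emph{Step 2 (the dichotomy).} Now $y \le 2\eps(x+z)$, so the remaining inequalities simplify to $|x'| \le 2\eps(x+z)$ and $z' \ge z(1-2\eps^2) - \eps(1+2\eps)x$. If some $s_* \le 0$ satisfies $z \le 8\eps x$ on $(-\infty, s_*)$, the first alternative holds. Otherwise pick $s_k \to -\infty$ with $w(s_k) > 0$. On $\{w > 0\}$ we have $x < z/(8\eps)$, hence $z' \ge z/2$ and $8\eps|x'| \le 4\eps z$, so $w' \ge (\tfrac12 - 4\eps)z > 0$; as in Step 1 this forces $w > 0$ on $[s_k, 0]$ for each $k$, and letting $k\to\infty$ we get $z > 8\eps x$ on all of $(-\infty,0]$. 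In particular $z' \ge z/2 > 0$ everywhere, so $z$ is increasing with $\lim_{s\to-\infty}z(s)=0$; hence (since $0 \le 8\eps x < z$) also $\lim_{s\to-\infty}x(s)=0$, and $z(\sigma) \le z(s)e^{(\sigma-s)/2}$ for $\sigma \le s \le 0$. Since $|x'| \le 2\eps(x+z) \le z/2$ is integrable at $-\infty$, writing $x(s) = \int_{-\infty}^s x'$ gives $x(s) \le \tfrac12\int_{-\infty}^s z(\sigma)\,d\sigma \le z(s)$; feeding this back, $|x'| \le 2\eps(x+z) \le 4\eps z$, so $x(s) \le 4\eps\int_{-\infty}^s z(\sigma)\,d\sigma \le 8\eps z(s)$. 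This is the second alternative with $c=8$.

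I expect the main obstacle to be the final part of Step 2: the monotonicity of $w$ only yields $z > 8\eps x$, i.e. $x < z/(8\eps)$, which for small $\eps$ bounds $x$ by a \emph{large} multiple of $z$ (the ``wrong'' direction); upgrading this to $x \le c\eps z$ needs the forced exponential decay $z(s)\to 0$ as $s\to -\infty$ and the integral bootstrap on $x'$. The other delicate point is the derivative computation in Step 1, where only the one-sided estimate $z' \ge z - \eps(x+y)$ is available and must be used to bound $-2\eps z'$ from above; reversing any inequality there breaks the argument.
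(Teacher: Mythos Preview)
The paper does not supply its own proof of this lemma; it is simply quoted from Choi--Mantoulidis \cite{ChoiMantoulidis} in the appendix, so there is nothing to compare against directly. Your argument is a clean and correct implementation of the standard Merle--Zaag barrier scheme. Both steps are sound: in Step~1 the sign computation for $\phi'$ is delicate exactly where you flag it (only the one-sided bound on $z'$ is available, and you use $z\ge 0$ to get $-z'\le \eps(x+y)$), and the backward-propagation of $\{\phi>0\}$ together with $y'<-y/2$ correctly contradicts $\liminf y=0$. In Step~2 the forward propagation of $\{w>0\}$ from the sequence $s_k\to-\infty$ gives $w>0$ everywhere, and then your two-pass bootstrap --- first $x\le z$ from $|x'|\le z/2$ and the exponential bound $z(\sigma)\le z(s)e^{(\sigma-s)/2}$, then $x\le 8\eps z$ from $|x'|\le 4\eps z$ --- is exactly what is needed to convert the ``wrong-direction'' bound $x<z/(8\eps)$ into $x\le c\eps z$. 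One small point worth making explicit: on $\{w>0\}$ you need $z>0$ to conclude $w'>0$ strictly; this follows because $w>0$ with $x\ge 0$ forces $z>0$ (and in fact $x+y+z>0$ together with $y\le 2\eps(x+z)$ and $8\eps x<z$ rules out $z=0$). With $\eps_0=1/16$ all the numerical inequalities you use ($4\eps<1/2$, $7/8-\eps/4-2\eps^2\ge 1/2$, $2\eps+16\eps^2\le 4\eps$, etc.) hold, and $c=8$ works.
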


{\footnotesize
	\bibliographystyle{alpha}
	\bibliography{lojasiewiczref.bib}}

\end{document}